\renewcommand{\title}[1]{\vspace{\fill}
  \eject\addtolength{\baselineskip}{4pt}
  {\bfseries\LARGE #1}\\[3mm]\addtolength{\baselineskip}{-4pt}}
\renewcommand{\author}[3]{\parbox[t]{75mm}
  {\begin{center}{\scshape #1}\\[3mm] #2\\
      {\ttfamily #3} \end{center}}}
\newtheorem{thm}{\bfseries Theorem}
\newtheorem{lem}[thm]{\bfseries Lemma}        %% lemmas, props, cor, etc
\newtheorem{remark}[thm]{\bfseries Remark}    %%   are numbered consecutively
\newtheorem{cor}[thm]{\bfseries Corollary}     
\newtheorem{defn}[thm]{\bfseries Definition}
\newtheorem{cl}[thm]{\bfseries Claim}
\tikzset{
    my box/.style = {
        , line cap = round
        , line join = round
    }
}
\newcommand{\highlight}[3]{
    \path [my box, line width = #1, draw = #2] #3;

    \pgfmathsetmacro{\innerlinewidth}{0.9 * #1}
    \path [my box, line width = \innerlinewidth, draw = #2] #3;
}
\tikzset{wavy/.style={decorate,decoration={snake,amplitude=.4mm,segment length=2mm,post length=0mm,pre length=0mm},line width=.5}}
\renewcommand*{\write@math}[3]{%
            \pgfmathtruncatemacro{\printindex}{#3+1}
            \Vertex[x = #1,y = #2,%
                    L = \cmdGR@cl@prefix\grMathSep{\printindex}]{\cmdGR@cl@prefix#3}}
\def\R{\mathbb R}
\def\Q{\mathbb Q}
\def\Z{\mathbb Z}
\def\N{\mathbb N}
\def\1{\mathbb 1}
\def\hit{\mathcal{H}}
\DeclareMathOperator*{\argmax}{arg\,max}
\DeclareMathOperator*{\argmin}{arg\,min}
\DeclarePairedDelimiter\floor{\lfloor}{\rfloor}
\def\offprints#1{\begingroup
\def\protect{\noexpand\protect\noexpand}\xdef\@thanks{\@thanks
\protect\footnotetext[0]{\unskip\hskip-15pt{\itshape Send offprint requests
to\/}: \ignorespaces#1}}\endgroup\ignorespaces}
\def\@thanks{}
\begin{document}

\begin{center}

  %%%%%%%%%%%%%%%%%%%%%%%%%%%%%%%%%%%%%%%%%%%%%%%%%%%%%%%% 
  % Title
  %%%%%%%%%%%%%%%%%%%%%%%%%%%%%%%%%%%%%%%%%%%%%%%%%%%%%%%% 
  \title{Matchings under distance constraints I.\footnote[1]{Supported by the \'UNKP-20-3 New National Excellence Program of the Ministry for Innovation and Technology from the source of the National Research, Development and Innovation Fund. The present paper is an extended and revised version of a conference proceeding presented at ISCO 2020~\cite{DmISCO2020}}} 
  %%%%%%%%%%%%%%%%%%%%%%%%%%%%%%%%%%%%%%%%%%%%%%%%%%%%%%%% 
  % begin : Authors
  %%%%%%%%%%%%%%%%%%%%%%%%%%%%%%%%%%%%%%%%%%%%%%%%%%%%%%%% 
  \author{
    P\'eter Madarasi
    %%%%%%%%%%%%%%%%%%%%%%%%%%%%%%%%%%%%%%%%%%%% 
    %% Underline the name of the speaker
    %%%%%%%%%%%%%%%%%%%%%%%%%%%%%%%%%%%%%%%%%%% 
  }{
    Department of Operations Research \\
    E\"otv\"os Lor\'and University\\
    Budapest, Hungary
  }{
    madarasi@cs.elte.hu
  }
  % \footnotetext[2]{Research is supported by ???}

  %%%%%%%%%%%%%%%%%%%%%%%%%%%%%%%%%%%%%%%%%%%%%%%%%%%%%%%% 
  % end : Authors
  %%%%%%%%%%%%%%%%%%%%%%%%%%%%%%%%%%%%%%%%%%%%%%%%%%%%%%%% 

\end{center}

%%%%%%%%%%%%%%%%%%%%%%%%%%%%%%%%%%%%%%%%%%%%%%%%%%%%%%%% 
% Abstract
%%%%%%%%%%%%%%%%%%%%%%%%%%%%%%%%%%%%%%%%%%%%%%%%%%%%%%%% 

\begin{quote}
  {\bfseries Abstract:}
  This paper introduces the \emph{$d$-distance matching problem}, in which we are given a bipartite graph $G=(S,T;E)$ with $S=\{s_1,\dots,s_n\}$,
  a weight function on the edges and an integer $d\in\Z_+$. The goal is to find a maximum-weight subset $M\subseteq E$ of the edges satisfying the following two conditions: i) the degree of every node of $S$ is at most one in $M$, ii) if $s_it,s_jt\in M$, then $|j-i|\geq d$.
  This question arises naturally, for example, in various scheduling problems.

  $\quad$We show that the problem is NP-complete in general and admits a simple $3$-approxi\-mation. We give an FPT algorithm parameterized by $d$ and also show that the case when the size of $T$ is constant can be solved in polynomial time. From an approximability point of view, we show that the integrality gap of the natural integer programming model is at most $2-\frac{1}{2d-1}$, and give an LP-based approximation algorithm for the weighted case with the same guarantee. A combinatorial $(2-\frac{1}{d})$-approximation algorithm is also presented. Several greedy approaches are considered, and a local search algorithm is described that achieves an approximation ratio of $3/2+\epsilon$ for any constant $\epsilon>0$ in the unweighted case. The novel approaches used in the analysis of the integrality gap and the approximation ratio of locally optimal solutions might be of independent combinatorial interest.
\end{quote}

%%%%%%%%%%%%%%%%%%%%%%%%%%%%%%%%%%%%%%%%%%%%%%%%%%%%%%%% 
% Keywords (3 $\sim$ 5 words)
%%%%%%%%%%%%%%%%%%%%%%%%%%%%%%%%%%%%%%%%%%%%%%%%%%%%%%%% 
\begin{quote}
  \textbf{Keywords:} Distance matching, Restricted matching, Restricted $b$-matching, Constrained matching, Scheduling, Parameterized algorithms, Approximation algorithms, Integrality gap
\end{quote}
\vspace{5mm}

\section{Introduction}\label{sec:intro}
In the \emph{perfect $d$-distance matching problem}, one is given a bipartite graph $G=(S,T;E)$ with $S=\{s_1,\dots,s_n\}$, $T=\{t_1,\dots,t_k\}$, a weight function on the edges $w:E\to\R_+$ and an integer $d\in\Z_+$.
% Let $\mathcal{I}_d$ denote all the intervals of length $d$ in $S$, i.e. $\mathcal{I}=\{\{s_i,\dots,s_{i+d-1}\}:i=1,\dots,n-d+1\}$.
The goal is to find a maximum-weight subset $M\subseteq E$ of the edges such that the degree of every node of $S$ is one in $M$ and if $s_it,s_jt\in M$, then $|j-i|\geq d$. In the (non-perfect) \emph{$d$-distance matching problem}, some of the nodes of $S$ might remain uncovered. Note that the order of nodes in $S=\{s_1,\dots,s_n\}$ affects the set of feasible $d$-distance matchings, but the order of $T=\{t_1,\dots,t_k\}$ is indifferent. For example, Figure~\ref{fig:dmFeas} is a feasible \mbox{perfect 3-distance matching}, but the example shown in Figure~\ref{fig:dmInfeas} is not, because edges $s_1t_2$ and $s_3t_2$ violate the $3$-distance condition.

\begin{figure}
  \begin{subfigure}{.49\textwidth}
    \centering
    \begin{tikzpicture}[xscale=.8]
      \SetVertexMath
      \grEmptyPath[form=1,x=0,y=1.5,RA=1.5,rotation=0,prefix=s]{5}
      \grEmptyPath[form=1,x=1.5,y=0,RA=1.5,rotation=0,prefix=t]{3}
      \draw[] (t0)--(s2);
      \draw[] (t1)--(s0);
      \draw[] (t1)--(s3);
      \draw[] (t2)--(s1);
      \draw[] (t2)--(s4);
    \end{tikzpicture} 
    \caption{A feasible perfect $3$-distance matching.}
    \label{fig:dmFeas}
  \end{subfigure}
  \begin{subfigure}{.5\textwidth}
    \centering
    \begin{tikzpicture}[xscale=.8]
      \SetVertexMath
      \grEmptyPath[form=1,x=0,y=1.5,RA=1.5,rotation=0,prefix=s]{5}
      \grEmptyPath[form=1,x=1.5,y=0,RA=1.5,rotation=0,prefix=t]{3}
      \draw[] (t0)--(s3);
      \draw[] (t1)--(s0);
      \draw[] (t1)--(s2);
      \draw[] (t2)--(s1);
      \draw[] (t2)--(s4);
    \end{tikzpicture} 
    \caption{An infeasible $3$-distance matching.}
    \label{fig:dmInfeas}
  \end{subfigure}
  \caption{}
  \label{fig:dmEg}
\end{figure}

An application of this problem for $w\equiv 1$ is as follows. Imagine $n$ consecutive all-day events $s_1,\dots,s_n$ each of which must be assigned one of $k$ watchmen $t_1,\dots,t_k$. For each event $s_i$, a set of possible watchmen is given --- those who are qualified to be on guard at event $s_i$. Appoint exactly one watchman to each of the events such that no watchman is assigned to more than one of any $d$ consecutive events, where $d\in\Z_+$ is given.
In the weighted version of the problem, let $w_{s_it_j}$ denote the level of safety of event $s_i$ if watchman $t_j$ is on watch, and the objective is to maximize the level of overall safety.

As another application of the above question, consider $n$ items $s_1,\dots,s_n$ one after another on a conveyor belt, and $k$ machines $t_1,\dots,t_k$. Each item $s_i$ is to be processed on the conveyor belt by one of the qualified machines ${N(s_i)\subseteq\{t_1,\dots,t_n\}}$ such that if a machine processes item $s_i$, then it can not process the next ${d-1}$~items --- because the conveyor belt is running.

% As an third application of the above problem, consider a go-kart game in an amusement park with $k$ different cars, $d$ of which are on the track while the remaining ones are in a pool to be chosen by the next person. There are $n$ people waiting in a queue, each of them having a priory of cars they want to sit in. Namely, for each person $s$ and cart $t$, we are given the amount $w_{st}$ person $s$ is willing to pay in order to go one round with car $t$. If someone sits on a car $t$, then the next $d-1$ people can not choose car $t$. The goal is to assign each person $s$ to a car $t$ in such a way that car $t$ is available when person $s$ is the next in the queue, and the income is maximized.

Motivated by the first application, in the \emph{cyclic $d$-distance matching problem} the nodes of $S$ are considered to be in cyclic order. The focus of this paper is on the above (perfect) $d$-distance matching problem, but some of the proposed approaches also apply for the cyclic case. In particular, the $3$-approximation greedy algorithm achieves the same guarantee for the weighted cyclic case (see Section~\ref{sec:greedy}), and the $(3/2+\epsilon)$-approximation algorithm for the unweighted case (see Section~\ref{sec:localSearch}).

\paragraph{\normalfont\textbf{Previous work}}
Observe that in the special case $d=|S|$, one gets the classic (perfect) bipartite matching problem. For $d=1$, the problem reduces to the \mbox{$b$-matching problem}, and one can show that it is a special case of the circulation problem for $d=2$. This implies that the problem is solvable in strongly polynomial time for $d=1,2$, since the circulation problem can be solved in strongly polynomial time~\cite{Tardos1985} and the $b$-matching problem is a special case of it.

A feasible $d$-distance matching $M$ can be thought of as a $b$-matching that contains none of the subgraphs $\{(\{s_i,s_j\},\{t\};\{s_it,s_jt\}) : s_it,s_jt\in E \text{ and } |i-j|\leq d\}$, where $b_s=1$ for $s\in S$ and $b_t=|S|$ for $t\in T$.
A similar problem is the $K_{p,p}$-free $p$-matching problem~\cite{Makai07}. Here one is given an arbitrary family $\mathcal T$ of the subgraphs of $G$ isomorphic with $K_{p,p}$, and the goal is to find a maximum-cardinality $b$-matching which induces no subgraph of $\mathcal T$, where $b:S\cup T\longrightarrow\{0,\dots,p\}$. This problem can be solved in polynomial time. Note that in the distance matching problem, $b$ is different, and the type of the forbidden subgraphs is $K_{2,1}$.

Another similar problem is the following. Given a partition $E_1,\dots,E_k$ of $E$ and positive integers $r_1,\dots,r_k$, find a perfect matching $M$ for which $|M\cap E_i|\leq r_i$. The problem is introduced and shown to be NP-complete in~\cite{Itai78}. Note that the side constraints in the distance matching problem are similar, but the degree constraints are different and our edge sets do not form a partition of $E$.

Several other versions of the "restricted" ($b$-)matching problem have been introduced, for example in~\cite{baste2019uniquelyRestrMatch,BercziVegh10,furst2019acyclicMatching,pap2005alternating}.

The perfect $d$-distance matching problem is a special case of the list-coloring problem on interval graphs~\cite{ZeithoferWess03}. Here a proper vertex coloring must be found for which the color of each node $v$ is chosen from a predefined list of colors $C_v$. Given an instance of the \mbox{$d$-distance matching problem}, we construct an interval \mbox{graph $H=(V,F)$} such that there is a one-to-one correspondence between perfect $d$-distance matchings of $G=(S,T;E)$ and proper list colorings of $H$. Let the nodes of $H$ be the intervals $\{R_d(s):s\in S\}$, where $R_d(s_i)=\{s_i,\dots,s_{\min(i+d-1,|S|)}\}$ for $i=1,\dots,n$, and let two distinct nodes $R_{s_i},R_{s_j}\in V$ be connected by an edge if and only if $R_{s_i}\cap R_{s_j}\neq\emptyset$. Finally, let the list of possible colors of node $R_d(s_i)$ be the neighbors of $s_i$ in $G$.
Observe that two nodes $s_i,s_j\in S$ can be assigned to the same node of $T$ in a distance matching $M$ if and only if there is no edge between $R_d(s_i)$ and $R_d(s_j)$ in $F$. The latter holds, however, if and only if nodes $R_d(s_i)\in V$ and $R_d(s_j)\in V$ can be assigned color $t$ simultaneously in a proper list coloring of $H$ (note that $t$ is in both lists of colors). Hence, there is a one-to-one correspondence between the perfect $d$-distance matchings of $G$ and the proper list colorings of $H$.

The perfect $d$-distance matching problem is also a special case of the frequency assignment problem~\cite{Aardal2007}. Let $S=\{s_1,\dots,s_n\}$ be a set of antennas and let $T=\{t_1,\dots,t_k\}$ be a set of frequencies. There is an edge between $s\in S$ and $t\in T$ if antenna $s$ can be set to frequency $t$. We are also given an interference graph of the antennas, in which two antennas are connected if and only if they may interfere with each other. The goal is to assign a frequency to each antenna such that no two interfering antennas are assigned the same (or in a certain sense similar, see ~\cite{Aardal2007}) frequency. To reduce the $d$-distance matching problem to the frequency assignment problem, let two antennas $s_i,s_j$ interfere if and only if $|i-j|\leq d$. This corresponds to the setting on the plane when antennas $s_1,\dots,s_n$ are located along a straight line in this order such that the Euclidean distance between $s_i$ and $s_{i+1}$ is one for $i=1,\dots,n-1$, and two antennas may interfere if and only if their Euclidean distance is at most $d$. By this construction, there exists a feasible frequency assignment (in which no two interfering antennas are assigned the same frequency) if and only if there exists a perfect $d$-distance matching.

%\textbf{The rest of the paper is organized as follows. The next section settles the complexity of the $d$-distance matching problem. Section~\ref{sec:wdm} and Section~\ref{sec:uwdm} investigate various approaches for the weighted- and unweighted $d$-distance matching problems, respectively.}
\paragraph{\normalfont\textbf{Our results}} This paper settles the complexity of the distance matching problem and gives an FPT algorithm parameterized by $d$. An efficient algorithm for constant $T$ is also given. We present an LP-based $(2-\frac{1}{2d-1})$-approximation algorithm for the weighted distance matching problem, which implies that the integrality gap of the natural IP model is at most $2-\frac{1}{2d-1}$. An interesting alternative proof for the integrality gap is also given. We also describe a combinatorial $(2-\frac{1}{d})$-approximation algorithm for the weighted case.
One of the main contributions of the paper is a $(3/2+\epsilon)$-approximation algorithm for the unweighted case for any constant $\epsilon>0$ in the unweighted case. The proof is based on revealing the structure of locally optimal solutions recursively. A generalization of K\H onig’s edge-coloring theorem~\cite[Page 74]{AF11} is given to the distance matching problem, as well.
%In the unweighted case, we consider a local search algorithm that achieves an approximation ratio of $3/2+\epsilon$ for any constant $\epsilon>0$.
%Motivated by the second application above, we give a polynomial-time algorithm to find a permutation of $S$ (i.e. the items on the conveyor belt) such that the weight of the optimal $d$-distance matching becomes as large as possible.
%In particular, it considers several greedy approaches to both the weighted and the unweighted case. It also presents an alternative proof for the integrality gap. 
\paragraph{\normalfont\textbf{Notation}} Throughout the paper, assume that $G=(S,T;E)$ contains no loops or parallel edges, unless stated otherwise.
Let $\Delta(v)$ and $N(v)$ denote the set of incident edges to node $v$ and the neighbors of $v$, respectively. For a subset $X\subseteq E$ of the edges, $N_X(v)$ denotes the neighbors of $v$ for edge set $X$.
We use $\deg(v)$ to denote the degree of node $v$.
Let $L_d(s_i)=\{s_{\max(i-d+1,1)},\dots,s_i\}$ and $R_d(s_i)=\{s_i,\dots,s_{\min(i+d-1,|S|)}\}$. The maximum of the empty set is $-\infty$ by definition. Given a function $f:A\to B$, both $f(a)$ and $f_a$ denote the value $f$ assigns to $a\in A$, and let $f(X)=\sum_{a\in X}f(a)$ for $X\subseteq A$. Let $\chi_Z$ denote the characteristic vector of set $Z$, i.e. $\chi_Z(y)=1$ if $y\in Z$, and $0$ otherwise. Occasionally, the braces around sets consisting of a single element are omitted, e.g. $\chi_e=\chi_{\{e\}}$ for $e\in E$.\looseness=-1

\section{Complexity}\label{sec:complexity}
This section settles the complexity of the $d$-distance matching problem. First, we introduce the following NP-complete problem. 
\begin{lem}\label{lem:intermedProblem}
  Given a bipartite graph $G=(S,T;E)$ and $S_1,S_2\subseteq S$ such that ${S_1\cup S_2=S}$, it is NP-complete to decide if there exists $M\subseteq E$ for which $|M|=|S|$ and both $M\cap E_1$ and $M\cap E_2$ are matchings, where $E_i$ denotes the edges induced by $T$ and $S_i$ for $i=1,2$. The problem remains NP-complete even if the maximum degree of the graph is at most 4.
\end{lem}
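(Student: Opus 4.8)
The plan is to exhibit a polynomial reduction from \textsc{3-SAT} (in fact from a bounded-occurrence variant, so that the degree bound comes for free). First I would reformulate the target problem in a more transparent way. Since $S_1\cup S_2=S$ and $|M|=|S|$, the matching conditions force every $s\in S$ to have degree exactly one in $M$; thus $M$ is really an assignment $c\colon S\to T$ with $c(s)\in N(s)$. Calling the nodes of $S_1\setminus S_2$ \emph{red}, those of $S_2\setminus S_1$ \emph{blue}, and those of $S_1\cap S_2$ \emph{purple}, the two matching conditions say exactly that each colour $t\in T$ receives at most one red and at most one blue node, where a purple node counts simultaneously as red and blue. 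Equivalently, every $t$ is used either by nothing, by a single purple node, or by at most one red node together with at most one blue node. The only coupling between the ``red world'' and the ``blue world'' is created by purple nodes, which block a colour entirely; this is where the hardness will come from.

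Next I would build the gadgets. For each variable $x$ I introduce two colours $a_x,b_x\in T$ and a purple node $p_x$ with $N(p_x)=\{a_x,b_x\}$; the node $p_x$ blocks exactly one of the two, and I read ``$x$ true'' as ``$p_x$ occupies $b_x$'' (so that $a_x$ stays available) and ``$x$ false'' as the opposite. To handle a variable that occurs in several clauses I would chain a bounded number of copies $p_x^1,p_x^2,\dots$ of this gadget together so that the chosen truth value is propagated consistently while every node stays of small degree. For each clause $C_j$ I add a red ``demand'' node $q_j$ whose neighbourhood is the set of colours corresponding to the literals of $C_j$ (the appropriate copy of $a_x$ for a positive and of $b_x$ for a negative literal). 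The point is that $q_j$ can be assigned a colour if and only if one of its literal-colours is left free by the variable gadgets, i.e.\ if and only if $C_j$ is satisfied.

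I would then prove the two directions of correctness. Given a satisfying truth assignment, set every purple node according to it and route each clause node $q_j$ to a colour of a true literal; the red/blue capacity conditions are met by construction, so a feasible $M$ with $|M|=|S|$ exists. Conversely, from a feasible $M$ I read off a truth assignment from the purple nodes (the chaining guarantees it is well defined), and the fact that each $q_j$ is matched certifies that every clause contains a true literal. Both directions are essentially bookkeeping once the gadgets are fixed.

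The main obstacle, and the place that needs real care, is the degree-$4$ requirement. Starting from a version of \textsc{3-SAT} in which each variable occurs a bounded number of times (which is still NP-complete), I would tune the gadgets so that the purple and clause nodes have degree at most three and each colour $a_x,b_x$ is incident to only a constant number of nodes; the truth-propagating chains are precisely the mechanism that redistributes the occurrences of a variable so that no colour or node exceeds degree $4$. Verifying this degree bound simultaneously with the correctness of the chains, rather than the high-level logic of the reduction, is the delicate part.
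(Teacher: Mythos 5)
Your reformulation of the problem is correct (every $s$ gets exactly one colour; each colour has capacity one for $S_1\setminus S_2$, one for $S_2\setminus S_1$, and a node of $S_1\cap S_2$ consumes both), and reducing from a bounded-occurrence variant to get the degree bound is the right instinct. But the proof has a genuine gap, and it sits exactly where you placed the "delicate part": the truth-propagating chain is not a routine detail, it is the whole reduction, and the natural candidates for it fail. With independent copies $p_x^1,p_x^2,\dots$ (each with its own pair $a_x^i,b_x^i$) nothing couples the copies, so $x$ can be read as true at one occurrence and false at another, and soundness collapses. The only forcing mechanism available in this model is capacity tightness --- a set of $S$-nodes whose joint neighbourhood has exactly matching capacity --- and the obvious tight gadget (a cycle of $2k$ purple nodes on $2k$ colours, which does have exactly the two alternating solutions) saturates \emph{every} colour in the gadget, leaving nothing for the clause nodes to attach to. Linking the copies instead with auxiliary red/blue/purple nodes of the form $N(u)=\{b_x^i,a_x^{i+1}\}$ runs into the dual problem: those auxiliary nodes must themselves be matched somewhere, and in the "consistent" assignments they are forced onto precisely the literal-colours that the clause nodes need, either blocking them or colliding with each other. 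Until you exhibit a gadget that simultaneously (i) admits exactly two global states, (ii) leaves the $k$ true-colours (resp.\ false-colours) with spare capacity for the clause nodes in the corresponding state, and (iii) keeps all degrees at most $4$, there is no reduction.

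It is worth seeing how the paper sidesteps this. It reduces from 3-Dimensional Matching (with bounded occurrence, which supplies the degree bound) rather than from SAT: for each $z\in Z$ the hyperedges $\mathcal H_z=\{e^z_1,\dots,e^z_{k_z}\}$ become colours, and a path of $k_z-1$ purple-type nodes must be matched into them, leaving exactly one colour of $\mathcal H_z$ free. This is the "path" gadget whose free slot is \emph{not} forced to a particular position --- which is useless for propagating a binary truth value, but is exactly what 3DM needs, since the position of the free slot \emph{is} the nondeterministic choice of which hyperedge at $z$ is selected; consistency across occurrences is then enforced for free by the matching conditions on the $X$- and $Y$-sides. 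If you want to keep a SAT-based route, you should expect your variable gadget to end up essentially re-deriving the Garey--Johnson variable rings used in the 3DM hardness proof, at which point reducing from 3DM directly is both shorter and safer.
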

  
  \begin{proof}
    We reduce the 3-Dimensional Matching problem to the problem defined in the lemma statement.
    Here, one is given three finite disjoint sets $X,Y,Z$ and a set of hyperedges $\mathcal{H}\subseteq X\times Y\times Z$. A subset of the hyperedges $F\subseteq\mathcal{H}$ is called 3-dimensional matching if $x_1\neq x_2, y_1\neq y_2$ and $z_1\neq z_2$ for any two distinct triples $(x_1, y_1, z_1), (x_2, y_2, z_2) \in F$. Being one of Karp's 21 NP-complete problems~\cite{Karp72}, it is NP-complete to decide whether there exists a 3-dimensional matching $F\subseteq\mathcal{H}$ of size $|Z|$. In fact, the problem remains NP-complete even if no element of $X\cup Y\cup Z$ occurs in more than three triples in $\mathcal{H}$~\cite[Page 221]{GareyJohnson79}. Without loss of generality, one might assume that $|X|=|Y|=|Z|$. Let $\mathcal{H}_z=\{e^z_1,\dots,e^z_{k_z}\}$ denote the set of hyperedges incident to $z\in Z$, i.e. $\mathcal{H}_z=\mathcal{H}\cap (X\times Y\times\{z\})$ for each $z\in Z$. To reduce the 3-dimensional matching problem to the above problem, consider the following construction.

    First define a bipartite graph $G=(S,T;E)$ where $S=X\cup (\mathcal{H}\setminus\{e^z_1:z\in Z\})\cup Y$, $T=\mathcal{H}$ and $E$ is as follows. For each $s\in S\cap(X\cup Y)$, add an edge between $s$ and all the hyperedges $e\in T$ incident to $s$; and connect each $e^z_i\in S\cap\mathcal{H}$ to hyperedges $e^z_{i-1},e^z_{i}\in T$ for each $z\in Z$ and $i=2,\dots,k_z$. Let $S_1=S\setminus Y$ and $S_2=S\setminus X$. Figure~\ref{fig:3dimMatching}~and~\ref{fig:3dimMatchingConstruction} show an instance of the 3-dimensional matching problem and the corresponding construction, respectively. Each hyperedge is represented by a unique line style, e.g. the dotted lines represent hyperedge $e_1=(x_2,y_1,z_1)$ in Figure~\ref{fig:3dimMatching}, and the dotted lines correspond to the same hyperedge $e_1$ in Figure~\ref{fig:3dimMatchingConstruction}. Note that the edges represented by a straight line in Figure~\ref{fig:3dimMatchingConstruction} do not represent hyperedges, but the edges between hyperedges. The highlighted edges in Figure~\ref{fig:3dimMatching}~and~\ref{fig:3dimMatchingConstruction} correspond to the same feasible 3-dimensional matching.

    Observe that there exists a 3-dimensional matching $F$ of size $|Z|$ if and only if there exists $M\subseteq E$ for which $|M|=|S|$ and both $M\cap E_1$ and $M\cap E_2$ are matchings, where $E_i$ denotes the edges incident to $S_i$ ($i=1,2$). Indeed, such an $M\subseteq E$ matches all nodes of $S$ into $T$, therefore there exists a unique hyperedge $e^*_z\in T\cap \mathcal H_z$ for each $z\in Z$ that is not matched to $S\cap \mathcal H$, but to exactly one element of $x\in X$ and exactly one element of $y\in Y$ (because all hyperedges in $S\cap \mathcal H_z$ are matched into $T\cap \mathcal H_z$, and these edges of $M$ cover all but one hyperedge of $T\cap \mathcal H_z$). These three edges correspond to the inclusion of hyperedge $(x,y,z)$. This way one obtains a 3-dimensional matching $F$ of size $|Z|$. On the other hand, if a 3-dimensional matching $F$ is given for which $|F|=|Z|$, then one might easily construct the desired $M\subseteq E$ as follows. For each $(x,y,z)\in F$, let $i$ be the unique index such that $e^z_i=(x,y,z)\in F$ and extend $M$ with edges $xe^z_i, ye^z_i\in E$. Let us also include a perfect matching between $S\cap\mathcal H_z$ and $(T\cap \mathcal H_z)\setminus\{e^z_i\}$ (such a perfect matching exists, because the induced subgraph consists of at most two disjoint paths of odd length). It is easy to see that $|M|=|S|$ and both $M\cap E_1$ and $M\cap E_2$ are matchings, and hence $M$ is feasible.

    To complete the proof, observe that the maximum degree in $G$ is at most four if one starts with an instance of the 3-dimensional matching for which no element of $X\cup Y\cup Z$ occurs in more than three triples. Hence, the problem indeed remains NP-complete even if the maximum degree is $4$.
    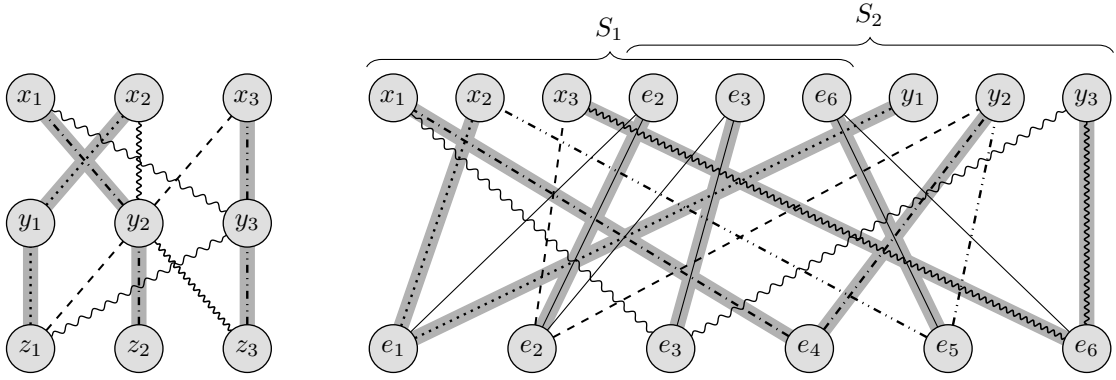
\begin{figure}
      \begin{subfigure}[t]{.29\textwidth}
        \centering
        \begin{tikzpicture}[scale=.95]
          \SetVertexMath
          \grEmptyPath[form=1,x=0,y=0,RA=1.5,rotation=0,prefix=z]{3}
          \grEmptyPath[form=1,x=0,y=1.75,RA=1.5,rotation=0,prefix=y]{3}
          \grEmptyPath[form=1,x=0,y=3.5,RA=1.5,rotation=0,prefix=x]{3}

          \draw[dotted,line width=.9] (z0) -- (y0);
          \draw[dotted,line width=.9] (y0) -- (x1);

          \draw[dashed,line width=.75] (z0) -- (y1);
          \draw[dashed,line width=.75] (y1) -- (x2);

          \draw[wavy] (z0) -- (y2);
          \draw[wavy] (y2) -- (x0);

          \draw[dashdotted,line width=.9] (z1) -- (y1);
          \draw[dashdotted,line width=.9] (y1) -- (x0);
          
          \draw[dashdotdotted,line width=.9] (z2) -- (y2);
          \draw[dashdotdotted,line width=.9] (y2) -- (x2);

          \draw[decorate,decoration={snake,amplitude=.3mm,segment length=1mm,post length=0mm,pre length=0mm},line width=.6] (z2) -- (y1);
          \draw[decorate,decoration={snake,amplitude=.3mm,segment length=1mm,post length=0mm,pre length=0mm},line width=.6] (y1) -- (x1);

          % \begin{pgfonlayer}{background}
          %   \draw[rounded corners=3em,line width=2em,blue!30,cap=round]
          %   (z0.center) -- (y0.west) -- (x1.center);
          % \end{pgfonlayer}

          \begin{pgfonlayer}{background}
            \highlight{2mm}{black!30}{(z0.center) -- (y0.center) -- (x1.center)}
            \highlight{2mm}{black!30}{(z1.center) -- (y1.center) -- (x0.center)}
            \highlight{2mm}{black!30}{(z2.center) -- (y2.center) -- (x2.center)}
          \end{pgfonlayer}
          
        \end{tikzpicture} 
        \caption{An instance of the 3-dimensional matching problem.}
        \label{fig:3dimMatching}
      \end{subfigure}
      \hfill
      \begin{subfigure}[t]{.69\textwidth}
        \centering
        \begin{tikzpicture}[scale=.95]
          \SetVertexMath
          \grEmptyPath[form=1,x=0,y=3.5,RA=1.2,rotation=0,prefix=x]{3}
          \Vertex[x=3.6,y=3.5,L=e_2]{se2}
          \Vertex[x=4.8,y=3.5,L=e_3]{se3}
          \Vertex[x=6,y=3.5,L=e_6]{se6}
          \grEmptyPath[form=1,x=7.2,y=3.5,RA=1.2,rotation=0,prefix=y]{3}
          
          \grEmptyPath[form=1,x=0,y=0,RA=8*1.2/5,rotation=0,prefix=e]{6}

          %edges representing hyperedges
          \draw[dotted,line width=.9] (e0) -- (x1);
          \draw[dotted,line width=.9] (e0) -- (y0);

          \draw[dashed,line width=.75] (e1) -- (y1);
          \draw[dashed,line width=.75] (e1) -- (x2);

          \draw[wavy] (e2) -- (y2);
          \draw[wavy] (e2) -- (x0);

          \draw[dashdotted,line width=.9] (e3) -- (y1);
          \draw[dashdotted,line width=.9] (e3) -- (x0);

          \draw[dashdotdotted,line width=.9] (e4) -- (y1);
          \draw[dashdotdotted,line width=.9] (e4) -- (x1);

          \draw[decorate,decoration={snake,amplitude=.3mm,segment length=1mm,post length=0mm,pre length=0mm},line width=.6] (e5) -- (y2);
          \draw[decorate,decoration={snake,amplitude=.3mm,segment length=1mm,post length=0mm,pre length=0mm},line width=.6] (e5) -- (x2);
          % edges btw hyperedges
          \draw[] (e0) -- (se2);
          \draw[] (e1) -- (se2);
          \draw[] (e1) -- (se3);
          \draw[] (e2) -- (se3);
          \draw[] (e4) -- (se6);
          \draw[] (e5) -- (se6);

          \begin{pgfonlayer}{background}
            \highlight{2mm}{black!30}{(e0.center) -- (x1.center)}
            \highlight{2mm}{black!30}{(e0.center) -- (y0.center)}
            \highlight{2mm}{black!30}{(se2.center) -- (e1.center)}
            \highlight{2mm}{black!30}{(se3.center) -- (e2.center)}

            \highlight{2mm}{black!30}{(e3.center) -- (x0.center)}
            \highlight{2mm}{black!30}{(e3.center) -- (y1.center)}

            \highlight{2mm}{black!30}{(e5.center) -- (x2.center)}
            \highlight{2mm}{black!30}{(e5.center) -- (y2.center)}
            \highlight{2mm}{black!30}{(e4.center) -- (se6.center)}
          \end{pgfonlayer}
         
        \node (x0-fit) at (0-.5,3.8) {};
        \node (se6-fit) at (6+.5,3.8) {};
        \node (se2-fit) at (3.6-.5,4) {};
        \node (y2-fit) at (7.2+1.2+1.2+.5,4) {};
        \begin{scope}[decoration={brace,amplitude=2mm}]
        \begin{scope}[every node/.style={midway,left,yshift=5mm,xshift=3mm}]
            \draw[decorate] (x0-fit.north east) -- (se6-fit.north west)node(C1){$S_1$};
            \draw[decorate] (se2-fit.north east) -- (y2-fit.north west)node(C2){$S_2$};
        \end{scope}
        \end{scope}
          
        \end{tikzpicture} 
        \caption{The corresponding instance of the problem stated in Claim~\ref{lem:intermedProblem}, where $S_1=\{x_1,x_2,x_3,e_2,e_3,e_6\}$ and $S_2=\{e_2,e_3,e_6,y_1,y_2,y_3\}$.}
        \label{fig:3dimMatchingConstruction}
      \end{subfigure}
      \caption{Illustration of the proof of Claim~\ref{lem:intermedProblem}. Each hyperedge is represented by a unique line style. The highlighted hyperedges on (a) and the highlighted edges on (b) correspond to the same feasible solution.}
      \label{fig:dmEg2}
    \end{figure}
    \qed
  \end{proof}

In what follows, the previous problem is reduced to the $d$-distance matching problem, hence the hardness of the latter.

\begin{thm}\label{thm:dmnpc}
  It is NP-complete to decide if a graph has a perfect $d$-distance matching, even if the maximum degree of the graph is at most 4.
\end{thm}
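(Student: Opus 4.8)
The plan is to reduce the NP-complete problem of Lemma~\ref{lem:intermedProblem} (in its maximum-degree-$4$ form) to the perfect $d$-distance matching problem, keeping the degree bound. Membership in NP is immediate, since one can check in polynomial time whether a given edge set covers every node of $S$ exactly once and satisfies the distance condition.

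Given an instance $(G=(S,T;E),S_1,S_2)$ of Lemma~\ref{lem:intermedProblem}, I would split $S$ into $A=S_1\setminus S_2$, $C=S_1\cap S_2$ and $B=S_2\setminus S_1$, and write $\alpha=|A|$, $\beta=|B|$, $\gamma=|C|$ (the degenerate cases $\alpha=0$ or $\beta=0$ reduce to ordinary bipartite matching and can be disposed of directly, so assume $\alpha,\beta\ge 1$). The construction keeps $G$ on the ``real'' nodes and fixes a linear order on an enlarged node set: first the $\alpha$ nodes of $A$, then $\beta-1$ fresh dummy nodes, then the $\gamma$ nodes of $C$, then $\alpha-1$ more dummy nodes, and finally the $\beta$ nodes of $B$. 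Each dummy receives a private new $T$-node as its unique neighbour, while real $S$-nodes keep exactly their edges into $T$. Finally set $d=\alpha+\beta+\gamma-1$. The role of the spacing is a tight calibration of three facts: the nodes of $S_1=A\cup C$ occupy positions $1,\dots,\alpha+\beta+\gamma-1$, so any two of them are at distance at most $d-1<d$; the nodes of $S_2=C\cup B$ occupy positions $\alpha+\beta,\dots,2\alpha+2\beta+\gamma-2$, again of span $d-1<d$; and the closest $A$--$B$ pair, the last node of $A$ (position $\alpha$) and the first node of $B$ (position $2\alpha+\beta+\gamma-1$), is at distance exactly $d$, so every $A$--$B$ pair is at distance at least $d$.

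With these three facts the equivalence should follow cleanly in both directions. In any perfect $d$-distance matching of the constructed graph, every dummy is forced onto its private $T$-node (its only neighbour), so restricting to the real nodes yields $M\subseteq E$ covering all of $S$; two nodes of $S_1$ (resp.\ $S_2$) can never be matched to a common $t\in T$, since they lie within distance $<d$, whence $M\cap E_1$ and $M\cap E_2$ are matchings and $M$ is feasible for Lemma~\ref{lem:intermedProblem}. Conversely, starting from a feasible $M$ for that problem, I would match the dummies to their private nodes and keep $M$ on the real part; here the only pairs of real nodes sharing a $t\in T$ consist of one node of $A$ and one node of $B$ (a node of $C$ would force two nodes of the same $S_i$ onto $t$), and such pairs are at distance at least $d$, giving a feasible perfect $d$-distance matching. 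Since real $T$-nodes retain their original degree and every dummy has degree one, the maximum degree stays at most $4$, and the whole construction is plainly polynomial.

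The crux, and the part I expect to require the most care, is the overlap $C=S_1\cap S_2$. The naive order $A,C,B$ fails because the rightmost node of $A$ and the leftmost node of $B$ are only $\gamma+1$ apart, which is below the window length forced by $S_1$ (and $S_2$) as soon as $\alpha\ge 2$; hence such pairs, though allowed in the instance of Lemma~\ref{lem:intermedProblem}, would be forbidden by the distance constraint. The real work is to pin down the exact padding that resolves this tension: inserting precisely $\beta-1$ dummies before $C$ and $\alpha-1$ after it simultaneously shrinks both windows $S_1$ and $S_2$ to span exactly $d-1$ and pushes the closest $A$--$B$ pair out to distance exactly $d$. Checking that no other pair of real nodes is accidentally forced together or apart (i.e.\ that the three facts above really capture all relevant pairs) is the only step I anticipate demanding attention.
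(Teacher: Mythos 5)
Your proposal is correct and takes essentially the same route as the paper: order $S$ as $S_1\setminus S_2$, then $S_1\cap S_2$, then $S_2\setminus S_1$, insert pendant dummy nodes (each with a private new $T$-neighbour) between the blocks so that each $S_i$ spans fewer than $d$ positions while every pair consisting of one node of $S_1\setminus S_2$ and one of $S_2\setminus S_1$ is at least $d$ apart, and observe that the degree bound is preserved. The only differences are cosmetic --- you use one fewer dummy on each side and $d=|S|-1$ where the paper uses $d=|S|$ --- and your calibration of the window lengths checks out.
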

  \begin{proof}
    It suffices to reduce the problem from Lemma~\ref{lem:intermedProblem} to the perfect $d$-distance matching problem. Let $G=(S,T;E)$; $S_1,S_2\subseteq S$,  $S_1\cup S_2=S$ be an instance of the above problem. Without loss of generality, one might assume that $S_1\not\subseteq S_2$ and $S_2\not\subseteq S_1$.
    % Let $S_1'=\{s'\in S' : 1\leq s'\leq l_1'\}$ and $S_2'=\{s'\in S' : l_2'\leq s'\leq n\}$, i.e. $S_i'$ denotes the nodes of $S'$ covered by $E_i'$ $(i=1,2)$.
    To construct an instance $G'=(S',T';E'),d\in\N$ of the perfect $d$-distance matching problem, let $G'=G$ and modify $G'$ as follows. Order the nodes of $S'$ such that the nodes of $S_1\setminus S_2$, $S_1\cap S_2$ and $S_2\setminus S_1$ appear in this order (the order of the elements inside the three sets is arbitrary). Insert $|S_1\setminus S_2|$ and $|S_2\setminus S_1|$ new nodes to $S'$ right after the last node of $S_1$ and right before the first node covered by $S_2$, respectively. Finally, add $|S_1\setminus S_2|+|S_2\setminus S_1|$ new nodes to $T'$, extend $E'$ with the edges of a perfect matching between the newly added nodes and let $d=|S|$. Figure~\ref{fig:dmNPCConstr} illustrates the construction. The blank nodes on the figure are the newly inserted ones, and $S_i'$ is the union of $S_i$ and the $i^{\text{th}}$ set of new nodes added to $S'$. The highlighted edges correspond to those in Figure~\ref{fig:3dimMatchingConstruction}.

    \begin{figure}[t]
      \centering
      \begin{tikzpicture}[xscale=.9]
        \SetVertexMath
        % top nodes
        \grEmptyPath[form=1,x=0,y=3.5,RA=1.2,rotation=0,prefix=x]{3}
        \Vertex[x=3.6,y=3.5,L=$$]{dummy1}
        \Vertex[x=4.4,y=3.5,L=$$]{dummy2}
        \Vertex[x=5.2,y=3.5,L=$$]{dummy3}
        \Vertex[x=5.2+1.2,y=3.5,L=e_2]{se2}
        \Vertex[x=5.2+2*1.2,y=3.5,L=e_3]{se3}
        \Vertex[x=5.2+3*1.2,y=3.5,L=e_6]{se6}
        \Vertex[x=5.2+4*1.2,y=3.5,L=$$]{dummy4}
        \Vertex[x=5.2+4*1.2+.8,y=3.5,L=$$]{dummy5}
        \Vertex[x=5.2+4*1.2+2*.8,y=3.5,L=$$]{dummy6}
        
        \grEmptyPath[form=1,x=5.2+4*1.2+2*.8+1.2,y=3.5,RA=1.2,rotation=0,prefix=y]{3}

        % bottom nodes
        \pgfmathsetmacro{\xOffsetT}{((5.2+4*1.2+2*.8+1.2+2*1.2)-(1.6+8*1.1+2*1.2+1.6))/2}
        \Vertex[x=\xOffsetT+0,y=0,L=$$]{dummy1T}
        \Vertex[x=\xOffsetT+.8,y=0,L=$$]{dummy2T}
        \Vertex[x=\xOffsetT+1.6,y=0,L=$$]{dummy3T}
        \grEmptyPath[form=1,x=\xOffsetT+1.6+1.2,y=0,RA=8*1.1/5,rotation=0,prefix=e]{6}
        \Vertex[x=\xOffsetT+1.6+8*1.1+2*1.2,y=0,L=$$]{dummy4T}
        \Vertex[x=\xOffsetT+1.6+8*1.1+2*1.2+.8,y=0,L=$$]{dummy5T}
        \Vertex[x=\xOffsetT+1.6+8*1.1+2*1.2+1.6,y=0,L=$$]{dummy6T}

        % dummy edges
        \Edges(dummy1,dummy1T);
        \Edges(dummy2,dummy2T);
        \Edges(dummy3,dummy3T);
        \Edges(dummy4,dummy4T);
        \Edges(dummy5,dummy5T);
        \Edges(dummy6,dummy6T);

        % edges representing hyperedges
        \draw[dotted,line width=.9] (e0) -- (x1);
        \draw[dotted,line width=.9] (e0) -- (y0);

        \draw[dashed,line width=.75] (e1) -- (y1);
        \draw[dashed,line width=.75] (e1) -- (x2);

        \draw[wavy] (e2) -- (y2);
        \draw[wavy] (e2) -- (x0);

        \draw[dashdotted,line width=.9] (e3) -- (y1);
        \draw[dashdotted,line width=.9] (e3) -- (x0);

        \draw[dashdotdotted,line width=.9] (e4) -- (y1);
        \draw[dashdotdotted,line width=.9] (e4) -- (x1);

        \draw[decorate,decoration={snake,amplitude=.3mm,segment length=1mm,post length=0mm,pre length=0mm},line width=.6] (e5) -- (y2);
        \draw[decorate,decoration={snake,amplitude=.3mm,segment length=1mm,post length=0mm,pre length=0mm},line width=.6] (e5) -- (x2);
        % edges btw hyperedges
        \draw[] (e0) -- (se2);
        \draw[] (e1) -- (se2);
        \draw[] (e1) -- (se3);
        \draw[] (e2) -- (se3);
        \draw[] (e4) -- (se6);
        \draw[] (e5) -- (se6);

        \begin{pgfonlayer}{background}
          \highlight{2mm}{black!30}{(e0.center) -- (x1.center)}
          \highlight{2mm}{black!30}{(e0.center) -- (y0.center)}
          \highlight{2mm}{black!30}{(se2.center) -- (e1.center)}
          \highlight{2mm}{black!30}{(se3.center) -- (e2.center)}

          \highlight{2mm}{black!30}{(e3.center) -- (x0.center)}
          \highlight{2mm}{black!30}{(e3.center) -- (y1.center)}

          \highlight{2mm}{black!30}{(e5.center) -- (x2.center)}
          \highlight{2mm}{black!30}{(e5.center) -- (y2.center)}
          \highlight{2mm}{black!30}{(e4.center) -- (se6.center)}
        \end{pgfonlayer}

        \node (x0-fit) at (0-.5,3.8) {};
        \node (se6-fit) at (5.2+3*1.2+.5,3.8) {};
        \node (se2-fit) at (5.2+1.2-.5,4) {};
        \node (y2-fit) at (5.2+4*1.2+2*.8+1.2+1.2+1.2+.5,4) {};
        \begin{scope}[decoration={brace,amplitude=2mm}]
        \begin{scope}[every node/.style={midway,left,yshift=5mm,xshift=3mm}]
            \draw[decorate] (x0-fit.north east) -- (se6-fit.north west)node(C1){$S_1'$};
            \draw[decorate] (se2-fit.north east) -- (y2-fit.north west)node(C2){$S_2'$};
        \end{scope}
        \end{scope}
        
      \end{tikzpicture} 
      \caption{Illustration of the construction in the proof of Theorem~\ref{thm:dmnpc} for the problem instance presented in Figure~\ref{fig:3dimMatchingConstruction}. There exists a perfect $9$-distance matching if and only if the problem given in Figure~\ref{fig:3dimMatchingConstruction} has a feasible solution of size 9.}
      \label{fig:dmNPCConstr}
    \end{figure}
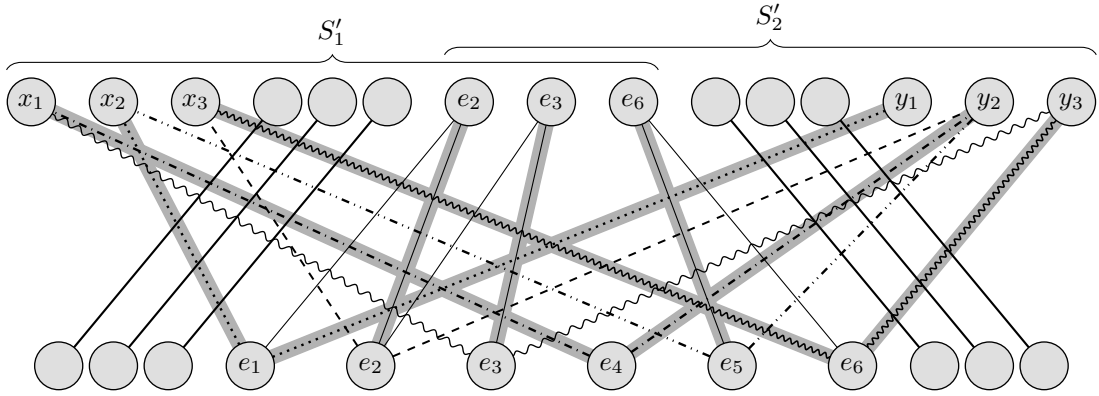

    To complete the proof, observe that there exists a perfect $|S|$-distance matching in $G'$ if and only if there exists $M\subseteq E$ for which $|M|=|S|$ and both $M\cap E_1$ and $M\cap E_2$ are matchings. Indeed, from $M$ one obtains a perfect $|S|$-distance matching in $G'$ by simply adding the perfect matching between the new nodes. To see the other direction, one has to remove this perfect matching from the perfect $S$-distance matching. Note that the maximum degree in $G'$ is not larger than in $G$, hence the problem remains hard even if the maximum degree is at most 4.
    \qed
  \end{proof}

\section{Weighted $d$-distance matching problem}\label{sec:wdm}
This section presents various approaches to the weighted $d$-distance matching problem. Section~\ref{sec:fptD} presents an FPT algorithm~\cite{ParameterizedComplexity} parameterized by $d$, while Section~\ref{sec:constantT} settles the case when the size of $T$ is constant. A simple greedy approach is presented in Section~\ref{sec:greedy}. Finally, Sections~\ref{sec:gap}~and~\ref{sec:lpApxAlgo} are devoted to the investigation of the natural linear programming model.
%The maximum degree is another natural parameter, but the problem remains hard even for constant degree by Theorem~\ref{thm:dmnpc}.
\subsection{FPT algorithm parameterized by $d$}\label{sec:fptD}
In what follows, an FPT algorithm parameterized by $d$ is presented for the weighted (perfect) $d$-distance matching problem. Observe that the weighted $d$-distance matching problem easily reduces to the perfect case by adding a new node $t_s$ to $T$ and a new edge $st_s$ of weight zero for each $s\in S$, therefore the algorithm is given only for the weighted perfect $d$-distance matching problem. The next claim gives a way to reduce the problem so that it admits an  efficient dynamic programming solution.

\begin{cl}\label{cl:reduction}
    Suppose that $s\in S$ is such that $deg(s)\geq 2d$. Then, we can remove an arbitrary minimum-weight edge of $\Delta(s)$ from the edge set without changing the weight of the optimal perfect $d$-distance matching.
%  If $\deg(s)\geq 2d$ for $s\in S$, then one of the incident edges can be removed in polynomial time without changing the weight of the optimal perfect $d$-distance matching.
\end{cl}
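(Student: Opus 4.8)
The plan is to prove the two inequalities $\mathrm{OPT}(G-e)\le \mathrm{OPT}(G)$ and $\mathrm{OPT}(G-e)\ge \mathrm{OPT}(G)$ separately, where $e$ is the removed minimum-weight edge of $\Delta(s)$ and $\mathrm{OPT}(\cdot)$ denotes the weight of an optimal perfect $d$-distance matching. The first inequality is immediate, since every perfect $d$-distance matching of $G-e$ is also one in $G$. The content is the reverse inequality, for which I would take an optimal perfect $d$-distance matching $M$ of $G$ and show how to convert it, without loss of weight, into one that avoids $e$; this simultaneously shows that a perfect matching survives the deletion.

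First I would dispose of the trivial case $e\notin M$, in which $M$ itself is feasible in $G-e$ and we are done. So assume $e=st\in M$, and write $s=s_i$. The idea is to re-match $s_i$ to a different neighbour $t'\in N(s_i)$ so that $M'=M-e+s_it'$ stays feasible and does not lose weight. Feasibility requires that $t'$ not already be used by a node $s_j$ that is close to $s_i$, i.e.\ with $0<|i-j|\le d-1$; each such forbidden target is the node to which some such $s_j$ is matched, and there are at most $2d-2$ such indices $j\in\{i-d+1,\dots,i+d-1\}\setminus\{i\}$. In addition I must insist $t'\ne t$, since otherwise $M'=M$ still contains $e$.

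The key counting step is to observe that the set of targets we must avoid, namely $\{t\}\cup\{M(s_j):0<|i-j|\le d-1\}$, has size at most $(2d-2)+1=2d-1$. Crucially, $t$ is not already among the forbidden targets $M(s_j)$, because $M$ is itself a feasible $d$-distance matching, so no close $s_j$ is matched to $t$ while $s_i$ is. Since $\deg(s_i)\ge 2d>2d-1$, at least one neighbour $t'\in N(s_i)$ survives, and $M'=M-e+s_it'$ is a perfect $d$-distance matching avoiding $e$: the node $s_i$ is re-covered, and $t'$ may still legally be shared with any far node already matched to it in $M$, since such a node has index $j$ with $|i-j|\ge d$.

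It remains to check that the weight does not decrease, and this is exactly where the hypothesis that $e$ is a minimum-weight edge of $\Delta(s_i)$ enters: $w(s_it')\ge w(e)$, hence $w(M')=w(M)-w(e)+w(s_it')\ge w(M)=\mathrm{OPT}(G)$. As $M'$ is feasible in $G-e$, this yields $\mathrm{OPT}(G-e)\ge\mathrm{OPT}(G)$ and closes the argument. I do not expect a genuine obstacle; the only point requiring care is the counting, namely that the avoided set numbers at most $2d-1$, strictly below the degree bound $2d$. In particular one should verify that $t$ is not double-counted among the $M(s_j)$ (which follows from feasibility of $M$) and note that near the ends of $S$ the number of close nodes only decreases, so the bound is never violated.
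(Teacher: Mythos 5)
Your proposal is correct and follows essentially the same argument as the paper: the paper's proof also observes that the set $Z$ of nodes that $M$ assigns to $L_d(s)\cup R_d(s)$ (which is exactly your forbidden set $\{t\}\cup\{M(s_j):0<|i-j|\le d-1\}$) has size at most $2d-1$, so $\deg(s)\ge 2d$ guarantees a neighbour $t'\notin Z$ with $w_{st}\le w_{st'}$, and swapping $st$ for $st'$ preserves feasibility and weight. The only cosmetic difference is your framing via the two inequalities $\mathrm{OPT}(G-e)\lessgtr\mathrm{OPT}(G)$; the counting and the exchange step are identical.
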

  \begin{proof}
    Let $st$ be a minimum-weight edge incident to node $s$. In order to prove that $st$ can be removed, it suffices to show that there is a maximum-weight $d$-distance matching that does not use edge $st$. Given a $d$-distance matching $M$ that contains edge $st$, let $Z\subseteq T$ denote the nodes that $M$ assigns to $L_d(s)\cup R_d(s)$.
    % , where $L_d(s_i)=\{s_{\max(i-d+1,1)},\dots,s_i\}$ and $R_d(s_i)=\{s_i,\dots,s_{\min(i+d-1,|S|)}\}$
    Since $|Z|\leq 2d-1$, there exists a node $t'\in N(s)\setminus Z$ for which $w_{st}\leq w_{st'}$. Observe that $M'=(M\cup\{st'\})\setminus\{st\}$ is a perfect $d$-distance matching of weight at least $w(M)$, which does not contain edge $st$. Indeed, the degree of $s$ remains one, and the only edge $M'$ contains between nodes $L_d(s)\cup R_d(s)$ and $t'$ is $st'$ itself (by contradiction, if there were another edge $s't'\in M'\setminus\{st'\}$ for some $s'\in L_d(s)\cup R_d(s)$, then $s't'$ would be in $M$, and hence $t'\in Z$ would hold).
%        Since $|Z|\leq 2d-1$, there exists a node $t'\in N(s)\setminus Z$ for which $w_{st}\leq w_{st'}$. To complete the proof, observe that $M'=(M\cup\{st'\})\setminus\{st\}$ is a perfect $d$-distance matching of weight at least $w(M)$, which does not contain edge $st$.
  \qed
  \end{proof}

Based on this claim, the problem can be reduced so that the degree of each node $s\in S$ is at most $2d-1$. The reduction can be performed in $\mathcal{O}(m+n)$ steps by removing all but the $2d-1$ heaviest edges incident to each node $s\in S$. To this end, let us assume that each edge weight occurs only once (otherwise fix an arbitrary order between the ties), and for each $s\in S$, find the $(2d-1)^\text{th}$ lightest edge $e_s$ in $\Delta(s)$ with the linear time selection algorithm, and then eliminate all edges of $\Delta(s)$ which are lighter than $e_s$.

In what follows, a dynamic programming approach is presented to solve the reduced problem in $\mathcal{O}((2d-1)^{d+1}n)$ steps.

For $i\geq d$, let $f(s_i,z_1,\dots,z_d)$ denote the weight of the maximum-weight perfect $d$-distance matching if the problem is restricted to the first $i$ nodes of $S$  and $s_{i-j+1}$ is assigned to its neighbor $z_j$ for $j=1,\dots,d$. Formally, let $$f(s_i,z_1,\dots,z_d)=-\infty$$ if $z_1,\dots, z_d$ are not distinct, otherwise, $f(s_i,z_1,\dots,z_d)$ can be defined by the following recursive formula.
\begin{equation}\label{defOfF}
  f(s_i,z_1,\dots,z_d)=
  \begin{cases}
    \sum\limits_{j=1}^d w_{s_{d-j+1}z_j}        & \text{if $i=d$}\\
    w_{s_iz_1}+\max\limits_{t\in N(s_{i-d})}f(s_{i-1},z_2,\dots,z_d,t)    & \text{if $i>d$},
  \end{cases}
\end{equation}
where $i\geq d$, $s_i\in S$, $z_j\in N(s_{i-j+1})$ for $j=1,\dots,d$ and $z_1,\dots, z_d$ are distinct. To see that recursion (\ref{defOfF}) holds, observe that in its first case, by definition, $f(s_d,z_1,\dots,z_d)$ is the weight of matching $\{s_jz_{d-j+1} : j=1,\dots,d\}$. In the second case of (\ref{defOfF}), $s_i$ must be mapped to $z_1$, and we want to find the maximum-weight perfect $d$-distance matching on the first $i-1$ nodes of $S$ which maps $s_{i-j+1}$ to its neighbor $z_j$ for $j=2,\dots,d$. To this end, we want to find a node $t\in N(s_{i-d})$ (to be assigned to node $s_{i-d}$) which maximizes $f(s_{i-1},z_2,\dots,z_d,t)$.

By definition, the weight of the optimal $d$-distance matching is
\begin{equation}\label{defOfOptF}
  \max\{f(s_n,z_1,\dots,z_d) : z_j\in N(s_{n-j+1}) \text{ for } j=1,\dots,d\}.
\end{equation}

Observe that the number of subproblems is $\mathcal{O}(n(2d-1)^d)$, since the degree of each $s\in S$ is at most $2d-1$. Recursion~(\ref{defOfF}) gives a way to compute $f(s_i,z_1,\dots,z_d)$ in $\mathcal{O}(d)$ steps if the subproblems are computed in appropriate order, i.e. the value $f(s_{i-1},z_1',\dots,z_d')$ is available for all necessary $z_1',\dots,z_d'\in T$. Therefore the number of steps to compute all the subproblems is $\mathcal{O}(dn(2d-1)^d)$. Furthermore, the optimum value can be computed in $\mathcal{O}((2d-1)^d)$ steps by (\ref{defOfOptF}). The overall running time of the algorithm is $\mathcal{O}(dn(2d-1)^d+poly(|S|+|T|))$.

\begin{remark}
  To improve the running time to $O(nd^{d+1}+poly(|S|+|T|))$, observe that when $$\max\limits_{t\in N(s_{i-d})}f(s_{i-1},z_2,\dots,z_d,t)$$ is computed in (\ref{defOfF}), we need to consider only the (at most) $d$ heaviest edges of $\Delta(s_{i-d})$ which are not incident to any of $z_2,\dots,z_d\in T$, since we only need to make sure that there is no conflict with the $d-1$ nodes on the left of $s_{i-d}$. This way the number of subproblems is $O(nd^d)$, and the overall number of steps is $O(nd^{d+1}+poly(|S|+|T|))$. Similarly, in~(\ref{defOfOptF}) one needs to consider only the $d$ heaviest edges incident to $z_j$ which are not incident to any of $z_{j+1},\dots,z_{d}$, therefore there are at most $d^d$ different configurations to be taken into account in~(\ref{defOfOptF}).
\end{remark}

\subsection{Efficient algorithm for  constant $|T|$}\label{sec:constantT}
If the size of $T$ is constant, then one can solve the problem efficiently as well. First, consider the following subproblems. Let $f(s_i,d_1,\dots,d_{|T|})$ denote the weight of the optimal perfect $d$-distance matching when the problem is restricted to $s_1,\dots,s_i$, and $t_j$ cannot be matched to nodes $s_{i-d_j+1},\dots,s_i$ for $j=1,\dots,|T|$ (here $d_j=0$ means that $t_j$ can be matched to any node). Formally, $f(s_i,d_1,\dots,d_{|T|})$ can be defined as follows. If $i\geq 2$, then let
\begin{multline}\label{defOfF2a}
  f(s_i,d_1,\dots,d_{|T|})\\=\max\limits_{t_j\in N(s_i) : d_j=0}\{w_{s_it_j}+f(s_{i-1},d_1',\dots,d_{j-1}',d-1,d_{j+1}',\dots,d_{|T|}')\},
\end{multline}
where $d_k'=\max(d_k-1,0)$ for $k=1,\dots,|T|$.
If $i=1$, then let
\begin{equation}\label{defOfF2b}
  f(s_1,d_1,\dots,d_{|T|})=
  \max\limits_{t_j\in N(s_1) : d_j=0}w_{s_1t_j}.
\end{equation}

By definition, the weight of the optimal $d$-distance matching is given by
\begin{equation}\label{defOfOptF2}
  \max_{t_i\in N(s_n)} f(s_{n-1},0,\dots,0,\underbrace{d-1}_{i^{\text{th}}},0,\dots,0).
\end{equation}

The number of subproblems to be solved is $\mathcal{O}(nd^{|T|})$, each of which can be computed in $\mathcal{O}(|T|)$ steps by (\ref{defOfF2a})~and~(\ref{defOfF2b}). Once all the subproblems are computed, it takes additional $\mathcal{O}(|T|)$ steps to compute the optimal value by (\ref{defOfOptF2}). Hence the overall number of steps is $\mathcal{O}(n|T|d^{|T|})$.

A similar approach settles the non-perfect case for constant $|T|$, the details of which are left to the reader.

% \begin{equation}\label{defOfF2}
%   f(s_i,d_1,\dots,d_{|T|})=\max
%   \begin{cases}
%     f(s_{i-1},d_1-1,\dots,d_i-1,\dots,d_{|T|}-1)\\
%     \max\limits_{t_j\in N(s_i) : d_j=0}\{w_{s_it_j}+f(s_{i-1},d_1',\dots,d_{|T|}'),\text{ where }d_k'=
%     \begin{cases}
%       d_k-1 &\text{if } k\neq j\\
%       d-1   &\text{otherwise}
%     \end{cases}

%     \}
%   \end{cases}
% \end{equation}

\subsection{A greedy algorithm}\label{sec:greedy}
This section describes a greedy method for the weighted (not-necessarily-perfect) $d$-distance matching problem, and proves that it is a 3-approximation algorithm.
\begin{algorithm}
  \caption{\hspace{0.5cm}\textsc{Greedy}}
  \label{alg:greedy}
  \begin{algorithmic}
    \State Let $e_1,\dots,e_m$ be the edges in non-increasing order by their weights.
    \State $M:=\emptyset$
    \For{$i=1,\dots,m$}
    \If{$M\cup\{e_i\}$ is a feasible $d$-distance matching}
    \State $M:=M\cup\{e_i\}$
    \EndIf
    \EndFor
    \State \textbf{output} $M$
  \end{algorithmic}
\end{algorithm}

\begin{thm}\label{thm:wGreedy}
  \textsc{Greedy} is a 3-approximation algorithm for the weighted $d$-dis\-tan\-ce matching problem.
\end{thm}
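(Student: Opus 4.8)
The plan is to compare the greedy output $M$ with a fixed optimum $M^*$ by a charging argument: I would construct a map $\phi\colon M^*\to M$ under which every optimal edge is sent to an at-least-as-heavy greedy edge that blocks it, and then show that no greedy edge is the image of more than three optimal edges. Together these give
\begin{equation*}
  w(M^*)=\sum_{e^*\in M^*}w(e^*)\le\sum_{e^*\in M^*}w(\phi(e^*))=\sum_{e\in M}|\phi^{-1}(e)|\,w(e)\le 3\,w(M),
\end{equation*}
which is exactly the claimed guarantee.

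To define $\phi$, for $e^*\in M^*\cap M$ set $\phi(e^*)=e^*$. For $e^*\in M^*\setminus M$, note that when \textsc{Greedy} examined $e^*$ it declined to add it, so the partial solution at that moment already contained an edge $e$ with which $e^*$ is in conflict (i.e.\ $\{e,e^*\}$ is not a feasible $d$-distance matching); set $\phi(e^*)=e$. Since edges are scanned in non-increasing weight order and $e$ was added before $e^*$ was considered, $w(\phi(e^*))=w(e)\ge w(e^*)$; in the first case this inequality is trivial. Because \textsc{Greedy} never deletes edges, $\phi(e^*)$ indeed lies in the final $M$. This establishes the weight domination used above and is the easy half of the argument.

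The crux is bounding $|\phi^{-1}(e)|$ for a fixed $e=s_k t\in M$. Every $e^*\in\phi^{-1}(e)$ either equals $e$ or is in conflict with $e$, and a conflict can arise in only two ways: $e^*$ shares its $S$-endpoint $s_k$ with $e$ (a degree conflict), or $e^*$ uses the same $T$-node $t$ with an $S$-index $i$ satisfying $|i-k|\le d-1$ (a distance conflict). Optimal edges of the first kind number at most one, since $s_k$ has degree at most one in $M^*$. For the second kind I would use that $M^*$ is itself a feasible $d$-distance matching: the $S$-indices of the $M^*$-edges at $t$ are pairwise at distance at least $d$, and any such indices lying in the window $\{k-d+1,\dots,k+d-1\}$ of $2d-1$ consecutive integers can number at most two (three of them would span at least $2d$ positions). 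Hence $\phi^{-1}(e)$ is contained in the union of these two groups and has size at most $1+2=3$.

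I expect this window-packing step to be the main obstacle, and the place where the constant $3$ (rather than something smaller) is forced: it is the tight interplay between the single degree conflict and the two possible distance conflicts sharing $t$. Everything else---existence of a blocker for each rejected optimal edge and the weight comparison---follows directly from the greedy ordering, so once the counting bound $|\phi^{-1}(e)|\le 3$ is in hand the displayed chain of inequalities closes the proof.
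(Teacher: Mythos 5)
Your proof is correct, and it takes a genuinely different route from the paper's. You use a direct charging argument: map each optimal edge to a heavier greedy edge that blocks it, and bound the number of preimages of each greedy edge by $3$. The paper instead runs an exchange (hybrid) argument: it defines $M_i$ as a maximum-weight feasible matching containing the first $i$ greedy edges $f_1,\dots,f_i$, sets $\theta_i=w(M_i)$, and shows $\theta_{i+1}\geq\theta_i-2w_{f_{i+1}}$ because $f_{i+1}$ conflicts with at most three edges of $M_i$, each of weight at most $w_{f_{i+1}}$; telescoping gives $3\theta_p\geq\theta_0$. Both proofs ultimately rest on the same combinatorial fact --- a single edge $s_kt$ conflicts with at most one edge of a feasible solution through the degree constraint at $s_k$ and at most two through the distance constraint at $t$ (since three $S$-indices pairwise at distance $\geq d$ cannot fit in the window of span $2d-2$) --- and you correctly identify this window-packing count as the source of the constant. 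Your identification of a \emph{single} conflicting edge for each rejected $e^*$ is legitimate because feasibility is defined by pairwise constraints, so an infeasible $M\cup\{e^*\}$ with $M$ feasible always contains a violating pair involving $e^*$. What each approach buys: your charging map makes the origin of the factor $3$ maximally transparent and avoids introducing the auxiliary matchings $M_i$; the paper's bookkeeping is set up so that the identical scaffolding ($M_i$, $\theta_i$) is reused verbatim in the analysis of \textsc{$S$-Greedy}, where the number of edges to delete drops from three to two and the ratio improves to $2$. Your argument adapts to that refinement just as easily, and, like the paper's, it extends to the cyclic variant with only the window argument needing rechecking.
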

  
  \begin{proof}
    Assume that \textsc{Greedy} returns edges $f_1,\dots,f_p$, and it selects them in this order.
    Let $M_i$ denote a maximum-weight $d$-distance matching that contains $f_1,\dots,f_i$, where $0\leq i\leq p$, i.e. $$M_i=\argmax\{w(M) : f_1,\dots,f_i\in M \text{ and $M$ is a $d$-distance matching}\}.$$ Furthermore, let $\theta_i$ denote the weight of $M_i$ for $i=0,\dots,p$. Note that $\theta_0$ is the weight of the optimal $d$-distance matching and $\theta_p$ is the weight of the matching \textsc{Greedy} returns. Observe that there exist edges $e,e',e''\in M_i\setminus\{f_1,\dots,f_i\}$ such that $(M_i\setminus\{e,e',e''\})\cup\{f_{i+1}\}$ is a feasible $d$-distance matching, which contains edges $f_1,\dots,f_{i+1}$. By the greedy selection rule, $w_e,w_{e'},w_{e''}\leq w_{f_{i+1}}$, one gets
    \begin{equation}\label{eq:greedyProgress}
      \theta_{i+1}\geq\theta_i+w_{f_{i+1}}-w_{e}-w_{e'}-w_{e''}\geq\theta_i-2w_{f_{i+1}}
    \end{equation}
    holds for all $i=0,\dots,p-1$. A simple inductive argument shows that (\ref{eq:greedyProgress}) implies $\theta_p\geq\theta_0-2\sum\limits_{i=1}^pw_{f_i}$, therefore $3\theta_p\geq\theta_0$ follows, which completes the proof.

    The analysis is tight even for $d=2$ and $w\equiv 1$ in the sense that \textsc{Greedy} might return only one edge, while the largest $2$-distance matching consists of 3 edges, see Figure~\ref{fig:wGreedyTight} for an example.
    \qed
  \end{proof}

\begin{figure}[H]
  \centering
  \begin{subfigure}[t]{.49\textwidth}
    \centering
    \begin{tikzpicture}[xscale=.9]
      \SetVertexMath
      \grEmptyPath[form=1,x=0,y=1.5,RA=1.5,rotation=0,prefix=s]{3}
      \grEmptyPath[form=1,x=0,y=0,RA=1.5,rotation=0,prefix=t]{2}
      \draw[] (t1)--(s0);
      \draw[] (t1)--(s2);
      \draw[] (t0)--(s1);
      \draw[wavy] (t1) -- (s1);
    \end{tikzpicture} 
    \caption{For $d=2$ and unit weights, \textsc{Greedy} might select edge $s_2t_2$ only, while the largest $2$-distance matching is of cardinality $3$.}
    \label{fig:wGreedyTight}
  \end{subfigure}
  \hfill
  \begin{subfigure}[t]{.49\textwidth}
    \centering
    \begin{tikzpicture}[xscale=.9]
      \SetVertexMath
      \grEmptyPath[form=1,x=0,y=1.5,RA=1.5,rotation=0,prefix=s]{2}
      \grEmptyPath[form=1,x=0,y=0,RA=1.5,rotation=0,prefix=t]{2}
      \draw[] (t1)--(s0);
      \draw[] (t0)--(s1);

      \draw[wavy] (t0) -- (s0);
    \end{tikzpicture} 
    \caption{For $d=2$ and unit weights, both \textsc{$S$-Greedy} and \textsc{$T$-Greedy} select edge $s_1t_1$ only, while the largest $2$-distance matching is of cardinality $2$.}
    \label{fig:unwGreedyTight}
  \end{subfigure}
  \caption{Tight examples for Theorems~\ref{thm:wGreedy},~\ref{thm:SGreedy}~and~\ref{thm:TGreedy}.}
  \label{fig:dmEg3}
\end{figure}

\begin{remark}
  The above proof shows that \textsc{Greedy} is a 3-approximation algorithm for the more general \emph{cyclic $d$-distance matching problem}, in which the nodes of $S$ are considered in cyclic order.
\end{remark}

\subsection{Linear programming}\label{sec:lp}
The following two sections prove that the integrality gap of the natural integer programming model is at most $2-\frac{1}{2d-1}$, and present an LP-based \mbox{$(2-\frac{1}{2d-1})$-ap}proxi\-mation algorithm for the weighted $d$-distance matching problem.
First consider the relaxation of the natural $0-1$ integer programming formulation of the weighted $d$-distance matching problem.

\begin{subequations}
  \begin{align}
    \label{dmLp}
    \tag{LP1}
    \max\sum_{st\in E}&w_{st}x_{st}\\
    \mbox{s.t.}\quad\quad\quad\quad\quad&&\nonumber\\
    x&\in\R_+^{E}&\label{eq:int}\\
    \sum_{st\in\Delta(s)} x_{st} &\leq 1 &\forall s\in S \label{eq:gyerekekDM}\\
    \sum_{s't\in E: s'\in R_d(s)} x_{s't} &\leq 1&\forall s \in S, t \in T \label{eq:autokDM}
  \end{align}
\end{subequations}

%In the relaxation of the $0-1$ integer programming formulation (LP2) of the weighted perfect $d$-distance matching problem we tighten equation~\ref{eq:gyerekekDM}
One gets the relaxation of the $0-1$ integer programming formulation (LP2)\\of the weighted perfect $d$-distance matching problem by tightening (\ref{eq:gyerekekDM}) to equality in~\ref{dmLp}.

%is as follows.
%\begin{subequations}
%  \begin{align}
%    \label{pdmLp}
%    \tag{LP2}
%    \max\sum_{st\in E}&w_{st}x_{st}\\
%    \mbox{s.t.}\quad\quad\quad\quad\quad&&\nonumber\\
%    x&\in\R_+^{E}&\label{eq:int2}\\
%    \sum_{st\in\Delta(s)} x_{st} &= 1 &\forall s\in S %\label{eq:gyerekekPDM}\\
%    \sum_{s't\in E: s'\in R_d(s)} x_{s't} &\leq 1&\forall s \in S, t \in T %\label{eq:autokPDM}
%  \end{align}
%\end{subequations}

%     6/5 $\leq$ IP gap of the above $d$-DM formulation for 0-1 $w$ $\leq 2-\frac{1}{2d}$\\
%not half int for $d=3$ (1/4, 1/3, 0.2, 0.166666, 0.142857 might appear)\\
    %     gap of the above perfect $d$-DM formulation is unbounded

    %     \subsection{Unweighted $d$-distance matching (2-$\frac{1}{d}$)-approximation}

\subsubsection{Integrality gap}\label{sec:gap}
This section proves that the integrality gap of \ref{dmLp} is at most $2-\frac{1}{2d-1}$, and proves the integrality of \ref{dmLp} and %\ref{pdmLp}
LP2 in special cases. The former result also follows from the LP-based approximation algorithm described in Section~\ref{sec:lpApxAlgo}. The following definition plays a central role both in the analysis of the integrality gap and in the LP-based approximation algorithm presented in the next section.

\begin{defn}\label{def:flatOrder}
  Given a feasible solution $x$ of \ref{dmLp}, an order of the edges $e_1=s^1t^1,\dots,e_m=s^mt^m$ is \textbf{$\theta$-flat with respect to $x$} if
  \begin{equation}\label{eq:flat}
    \xi_i+\bar\xi_i\leq \theta-x_{e_i}
  \end{equation}
  holds for each $i=1,\dots,m$, where
  $\xi_i=\sum\{x_{e_j} : j>i, e_j\in\Delta(s^i)\}$ and $\bar\xi_i=\sum\{x_{e_j} : j>i, e_j\in\Delta(t^i), s^j\in L_d(s^i)\cup R_d(s^i)\}$.
\end{defn}
That is, an order of the edges is $\theta$-flat if the sum of $x$ on those edges among $e_{i+1},\dots,e_m$ that are hit by an edge $e_i$ is at most $\theta-x_{e_i}$ for every $i$. Note that any order of the edges is $3$-flat by definition, since for any edge $e=st$, the sum of variables on all edges incident to $s$ is at most $1$ by~(\ref{eq:gyerekekDM}), whereas the sum on the edges induced by $L_d(s)\cup R_d(s)$ and~$\{t\}$ is at most $2$ by~(\ref{eq:autokDM}). The following lemma further improves this bound to $2-\frac{1}{2d-1}$.

\begin{lem}\label{lem:orderingNEW}
  There exists an optimal solution $x\in\Q^E$ of \ref{dmLp} and an order $e_1=s^1t^1,\dots,e_m=s^mt^m$ of the edges that is $(2-\frac{1}{2d-1})$-flat with respect to $x$.
\end{lem}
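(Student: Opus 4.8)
The plan is to fix one convenient optimal solution $x$ and to reduce the existence of a $(2-\tfrac1{2d-1})$-flat order to a purely combinatorial statement about the conflict structure of $x$. For an edge $e=st$ and a set $U\subseteq E$, call the \emph{conflict weight of $e$ in $U$} the total $x$-value of $e$ together with all edges of $U$ that are incompatible with it, i.e. the edges of $\Delta(s)\cap U$ and the edges $s''t\in U$ with $s''\in L_d(s)\cup R_d(s)$. The first step is to observe that a $\theta$-flat order can be built greedily: repeatedly delete from the current edge set an edge of minimum conflict weight, giving the deleted edges increasing indices $1,2,\dots,m$. When an edge $e_i=s^it^i$ is deleted from the current set $U_i$, the edges receiving \emph{larger} indices are exactly $U_i\setminus\{e_i\}$, so the left-hand side $\xi_i+\bar\xi_i$ of~(\ref{eq:flat}) is precisely (conflict weight of $e_i$ in $U_i$)$-x_{e_i}$. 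Hence the greedy order is $\theta$-flat as soon as the following holds: \emph{for every nonempty $U\subseteq E$ there is an edge whose conflict weight in $U$ is at most $2-\tfrac1{2d-1}$.}

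To prove this claim I would examine the edges of $U$ incident to the \emph{leftmost} node $s_p$ occurring in $U$ (i.e. $p$ minimal with $\Delta(s_p)\cap U\neq\emptyset$). For such an edge $s_pt'$ every conflicting edge of $U$ sharing $t'$ lies to the right of $p$, hence in $R_d(s_p)$; thus its conflict weight equals $W+C_{t'}-x_{s_pt'}$, where $W=\sum_{t'}x_{s_pt'}\le 1$ is the degree budget at $s_p$ (by~(\ref{eq:gyerekekDM})) and $C_{t'}=\sum_{q\in R_d(s_p)}x_{s_qt'}\le 1$ is the window budget at $t'$ (by~(\ref{eq:autokDM})). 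Assume for contradiction that every leftmost edge has conflict weight exceeding $2-\tfrac1{2d-1}$. Using $C_{t'}\le1$ this forces $W>1-\tfrac1{2d-1}$ and $x_{s_pt'}<\tfrac1{2d-1}$ for each leftmost edge, so $s_p$ has more than $2d-2$, i.e. at least $2d-1$, incident edges in $U$. Using $W\le1$ instead, each leftmost edge forces more than $1-\tfrac1{2d-1}$ units of $x$-weight strictly to the right of $p$ at its own node $t'$; since these right windows sit at distinct $T$-nodes they are edge-disjoint, so their total is at most $\sum_{q=p+1}^{p+d-1}\sum_{t}x_{s_qt}\le d-1$. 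Therefore the number of leftmost edges is below $(d-1)/(1-\tfrac1{2d-1})=\tfrac{2d-1}{2}$, i.e. at most $d-1$. The two bounds $2d-1$ and $d-1$ contradict each other for every $d\ge1$, proving the claim.

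Combining the claim with the greedy construction yields a $(2-\tfrac1{2d-1})$-flat order with respect to $x$; taking $x$ to be a vertex of the optimal face of~\ref{dmLp} (which is rational, as the constraint matrix is $0$--$1$ and the polytope is bounded by $0\le x_{st}\le 1$) provides the required $x\in\Q^E$ and finishes the argument. The case $d=1$ is immediate: the window part is then empty and the conflict weight reduces to a degree sum, which is at most $1=2-\tfrac1{2d-1}$. I expect the combinatorial claim to be the crux. The delicate point is to play the two budget constraints — the degree bound at $s_p$ and the window bound at each neighbouring $t'$ — against one another \emph{without double counting}, so that a hypothetical bad set is forced to pack at least $2d-1$ light edges onto its leftmost node while only $d-1$ columns, of total weight $d-1$, are available to their right. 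Getting the saving to be exactly $\tfrac1{2d-1}$, rather than something weaker, depends on this tight accounting; the main risk is mishandling the overlap of the two families of conflicts at the leftmost edge itself.
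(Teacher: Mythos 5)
Your proof is correct, and it takes a genuinely different route from the paper's. The paper first applies an exchange argument to obtain an optimal $x$ whose support uses at most $2d-1$ edges per node of $S$, then orders the edges by their $S$-endpoint (largest $x$ first within each star) and bounds the two halves of~(\ref{eq:flat}) asymmetrically: $\bar\xi_i\leq 1-x_{e_i}$ falls out of~(\ref{eq:autokDM}) because every later conflicting edge at $t^i$ has its $S$-node in $R_d(s^i)$, while $\xi_i\leq 1-\frac{1}{2d-1}$ is squeezed out of optimality a second time (if all edges at $s^i$ carried $x<\frac{1}{2d-1}$, then $x$ could be augmented). You instead reduce flatness to a degeneracy-type claim --- every nonempty $U\subseteq E$ contains an edge of conflict weight at most $2-\frac{1}{2d-1}$ --- and prove it by playing the degree budget~(\ref{eq:gyerekekDM}) at the leftmost occupied node $s_p$ against the total weight $\leq d-1$ of the $d-1$ columns to its right; the two resulting bounds on the leftmost star, $|F|\geq 2d-1$ and $|F|\leq d-1$, are incompatible, and your accounting of the overlap (the $-x_{s_pt'}$ term) and the separate treatment of $d=1$ are both handled correctly. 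What your approach buys is notable: it never invokes optimality of $x$, so it shows that a $(2-\frac{1}{2d-1})$-flat order exists with respect to \emph{every} feasible solution of~\ref{dmLp}, which is slightly stronger than the lemma as stated and plugs into Theorem~\ref{thm:algGap} for an arbitrary rational LP optimum without any support-purification step. What the paper's version buys is a particularly simple explicit order (scan $S$ left to right) rather than an iterated minimum-conflict-weight peeling.
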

  \begin{proof}
    Let $E_s\subseteq\Delta(s)$ denote the first $\min(2d-1,\deg(s))$ largest weight edges incident to node $s$ for each $s\in S$. Let $x$ be an optimal solution to \ref{dmLp} for which $\gamma(x)=\sum\{ x_e : e\in E\setminus\bigcup_{s\in S} E_s\}$ is minimal. Towards a contradiction, suppose that ${\gamma(x)>0}$. By definition, ${\gamma(x)>0}$ implies that there exists an edge $st\in E\setminus\bigcup_{k=1}^n E_k$ for which $x_{st}>0$. There exists an edge $st'\in E_s$ such that 
      $x'=x-\epsilon\chi_{st}+\epsilon\chi_{st'}$
    is feasible for sufficiently small $\epsilon>0$, otherwise $x(\bigcup\{\Delta(s') : s'\in L_d(s)\cup R_d(s)\})\geq 2d-1+\epsilon$ would hold, which is not possible because of the constraints~(\ref{eq:gyerekekDM}).
    But then $wx\leq wx'$ and $\gamma(x')<\gamma(x)$, contradicting the minimality of $\gamma(x)$. Therefore $\gamma(x)=0$ follows, meaning that $x_e=0$ holds for each $e\in E\setminus\bigcup_{s\in S} E_s$. Hence one can restrict the edge set to $\bigcup_{s\in S} E_s$ without change in the optimal objective value, which implies that there exists a rational optimal solution $x\in\Q^E$ of \ref{dmLp} with $\gamma(x)=0$.
    
    Let $x$ be as above, and let $e_1=s^1t^1,\dots,e_m=s^mt^m$ be the order of the edges given by Algorithm~\ref{alg:orderingProc} for input $x$.
    \begin{algorithm}[b]
  \caption{\hspace{0.5cm}The ordering procedure for Lemma~\ref{lem:orderingNEW}}
  \label{alg:orderingProc}
  \begin{algorithmic}

    \State Let $x$ be a given fractional solution to \ref{dmLp} and let $G=(S,T;E)$ be a copy of the graph.
    \State $j:=1$
    \For{$i=1,\dots,n$}
    \While{$\deg(s_i)\neq 0$}
    \State Choose an edge $s_it\in\Delta(s_i)$ for which $x_{s_it}$ is as large as possible.
    \State $e_j:=s_it$
    \State $j:=j+1$
    \State $E:=E\setminus\{s_it\}$
    \EndWhile
    \EndFor
    \State \textbf{output} $e_1,\dots,e_m$
  \end{algorithmic}
\end{algorithm}
    To prove that this order is $(2-\frac{1}{2d-1})$-flat with respect to $x$, let $\xi_i$ and $\bar\xi_i$ $(i=1,\dots,n)$ be as in Definition~\ref{def:flatOrder}. First observe that $\bar\xi_i\leq 1-x_i$ holds for each $i=1,\dots,n$, because the algorithm places each edge $\bigcup_{j=1}^{i-1}\Delta(s^j)$ before $e_i$. Hence, to obtain (\ref{eq:flat}), it suffices to prove that $\xi_i\leq 1-\frac{1}{2d-1}$. For any node $s\in S$, if there exists an edge $st\in\Delta(s)$ for which $x_{st}\geq\frac{1}{2d-1}$, then $\xi_i\leq 1-\frac{1}{2d-1}$ follows for each $e_i\in\Delta(s)$, since $x_{e}\geq\frac{1}{2d-1}$ holds for the first edge $e\in\Delta(s)$ selected by Algorithm~\ref{alg:orderingProc}. Otherwise, there exists no edge $st\in\Delta(s)$ for which $x_{st}\geq\frac{1}{2d-1}$. Therefore $\xi_i\leq x(\Delta(s))<|E_s|\frac{1}{2d-1}\leq 1$ follows for $e_i\in\Delta(s)$, which completes the proof if $|E_i|<2d-1$. Hence one can assume that $|E_i|=2d-1$. Next we argue that $x'=x+\epsilon\chi_{st'}$ is feasible for some $st'\in E_s$ and sufficiently small $\epsilon>0$. By contradiction, if there existed no such edge $st'$, then it is one of the constraints~(\ref{eq:autokDM}) that prevents us from increasing $x_{st'}$ for each $st'\in\Delta(s)$. However, these tight constraints imply that $x(\bigcup\{\Delta(s') : s'\in L_d(s)\cup R_d(s)\})=2d-1$, but this can not be the case, because $x(\Delta(s))<1$. Hence $x'$ is a feasible solution for some $st'\in\Delta(s)$ and sufficiently small $\epsilon>0$ --- contradicting the optimality of $x$.
    
    Therefore $\xi_i\leq 1-\frac{1}{2d-1}$ follows for $i=1,\dots,n$, which means that the order of the edges is $(2-\frac{1}{2d-1})$-flat.
    \qed
  \end{proof}

\begin{thm}\label{thm:gap}
  The integrality gap of \ref{dmLp} is at most $2-\frac{1}{2d-1}$.
\end{thm}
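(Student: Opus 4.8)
The plan is to deduce the bound from Lemma~\ref{lem:orderingNEW} by converting a $(2-\frac{1}{2d-1})$-flat order into an integral $d$-distance matching whose weight, scaled by $\theta:=2-\frac{1}{2d-1}$, dominates the LP optimum. Concretely, I would fix an optimal $x\in\Q^E$ of \ref{dmLp} together with a $\theta$-flat order $e_1,\dots,e_m$ as provided by the lemma, and prove the self-contained statement: whenever $w\geq 0$ and $e_1,\dots,e_m$ is $\theta$-flat with respect to a feasible $x$, there is an integral feasible solution $M$ with $\theta\,w(M)\geq wx$. Applying this to the optimal $x$ gives $\mathrm{OPT}_{\mathrm{LP}}=wx\leq\theta\,w(M)\leq\theta\,\mathrm{OPT}_{\mathrm{IP}}$, which is exactly the claimed integrality gap.

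The heart of the argument is a local-ratio decomposition carried out relative to the \emph{fixed} fractional solution $x$ (rather than relative to the LP optimum of a sub-weight), and I would run it by induction on the number of edges of positive weight. After discarding all edges with $w_e=0$ --- which changes neither $wx$ nor the attainable $w(M)$, and which can only decrease the sums $\xi_i,\bar\xi_i$ and hence preserves $\theta$-flatness --- let $e_1$ be the first remaining edge and let $C=\{e_1\}\cup\mathrm{conflict}(e_1)$ be $e_1$ together with all edges conflicting with it. Since $e_1$ is first in the order, every conflicting edge gets an index larger than $1$, so $\mathrm{conflict}(e_1)$ is accounted for exactly by $\xi_1$ (shared $S$-endpoint) and $\bar\xi_1$ (shared $T$-endpoint inside $L_d(s^1)\cup R_d(s^1)$), with no overlap; flatness at $i=1$ in Definition~\ref{def:flatOrder} then yields $x(C)=x_{e_1}+\xi_1+\bar\xi_1\leq\theta$. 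Setting $\delta=\min_{e\in C}w_e>0$ and $w_1=\delta\chi_C$, $w_2=w-w_1\geq 0$, I would recurse on $w_2$ (still $\theta$-flat with respect to $x$, as flatness is independent of the weights) to obtain $M_2$ with $\theta\,w_2(M_2)\geq w_2x$, and then greedily extend $M_2$ to a maximal feasible $d$-distance matching $M$ within the current edge set.

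Combining the two pieces closes the induction: any maximal $M$ must meet $C$ (if $e_1\notin M$ then maximality forces a conflict of $e_1$ with some edge of $M$, which lies in $C$), so $w_1(M)\geq\delta$ and $\theta\,w_1(M)\geq\theta\delta\geq\delta\,x(C)=w_1x$; meanwhile $w_2(M)\geq w_2(M_2)$ because the added edges have nonnegative $w_2$-weight, giving $\theta\,w_2(M)\geq w_2x$. Adding these, $\theta\,w(M)\geq w_1x+w_2x=wx$, and termination is guaranteed because the minimizing edge of $C$ drops to zero weight in $w_2$, strictly reducing the number of positive-weight edges. The main obstacle, and the reason a naive local-ratio or a direct weighted greedy does not work, is precisely that $\theta$-flatness constrains the chosen $x$ rather than every feasible point: one cannot bound $\mathrm{OPT}_{\mathrm{LP}}(w_1)$ by $\theta\delta$, only the value $w_1x$ at the fixed solution. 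I expect verifying that this fixed-$x$ comparison still drives the telescoping --- i.e. that bounding $w_1(M)$ against $w_1x$ instead of against the fractional optimum of $w_1$ is exactly what is needed --- together with checking that edge deletion preserves $\theta$-flatness, to be the only genuinely delicate points; the remaining estimates are routine.
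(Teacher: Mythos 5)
Your proof is correct, but it is not the route the paper takes for Theorem~\ref{thm:gap}; it is essentially the paper's \emph{other} proof, namely the local-ratio argument behind Algorithm~\ref{alg:weighted2Approx} and Theorem~\ref{thm:algGap} in Section~\ref{sec:lpApxAlgo}, which the paper explicitly offers as a second, algorithmic way to re-derive the same integrality-gap bound. The paper's own proof of Theorem~\ref{thm:gap} instead uses the $\theta$-flat order to build a fractional edge-coloring: with $K$ the common denominator of $x$ and $q=\lfloor K\theta\rfloor$, it colors the edges in \emph{reverse} flat order $e_m,\dots,e_1$, giving $e_i$ a set of $Kx_{e_i}$ colors disjoint from those already used on conflicting edges (the counting $|A|+|B|=K(\xi_i+\bar\xi_i)\le q-Kx_{e_i}$ is exactly where flatness enters), so that $x=\sum_i\lambda_i\chi_{M_i}$ with $\sum_i\lambda_i\le\theta$, and then the heaviest $M_i$ is a $\theta$-approximation by averaging. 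The coloring proof buys a stronger structural statement (a dominating convex decomposition of $x$ into distance matchings, a fractional-coloring-type result), but as the paper notes it need not be polynomial since $q$ can be exponential; your local-ratio recursion buys a polynomial-time algorithm and a cleaner induction. The two delicate points you flagged are handled correctly: deleting edges only shrinks $\xi_i$ and $\bar\xi_i$ so flatness survives, and comparing $w_1(M)$ against the fixed value $w_1x=\delta\,x(C)\le\delta\theta$ (rather than against $\mathrm{OPT}_{\mathrm{LP}}(w_1)$) is precisely the inequality $\theta(w-w')(M)\ge\theta w_{st}\ge(w-w')x$ that drives the paper's proof of Theorem~\ref{thm:algGap}. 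One cosmetic remark: your set $C$ of edges conflicting with $e_1$ should use the one-sided window $\{s't^1: s'\in R_d(s^1)\}$ appearing in constraint~(\ref{eq:autokDM}) only if you argue symmetrically; your two-sided version $L_d(s^1)\cup R_d(s^1)$ matches Definition~\ref{def:flatOrder} and is the right one, and your observation that $\xi_1$ and $\bar\xi_1$ do not double-count (absent parallel edges) is needed and valid.
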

  \begin{proof}
    Let $\theta=2-\frac{1}{2d-1}$. By Lemma~\ref{lem:orderingNEW}, there exists a solution $x\in\Q^E$ to \ref{dmLp} and an order of the edges $e_1=s^1t^1,\dots,e_m=s^mt^m$ that is $\theta$-flat with respect to $x$. First, it will be shown that there exist $d$-distance matchings $M_1,\dots,M_q$ and coefficients $\lambda_1,\dots,\lambda_q\in\Q_+$ such that $\sum_{i=1}^q\lambda_i\chi_{M_i}=x$ and $\lambda:=\sum_{i=1}^q\lambda_i\leq\theta$.
    
    Let $K\in\N$ be the lowest common denominator of $\{x_e : e\in E\}$, and let ${q=\floor{K\theta}}$. The main observation is that each edge $e\in E$ can be assigned a set of colors $C_e\subseteq\{1,\dots,q\}$ such that each color class corresponds to a feasible $d$-distance matching and $|C_e|=Kx_e$. To prove this, the edges are greedily colored one by one in order $e_m,\dots,e_1$.
    By induction, assume that edges $e_m,\dots,e_{i+1}$ already have their color sets. It suffices to assign a color set $C_{e_i}$ to edge $e_i$ which is of size $Kx_{e_i}$ and distinct from both $A:=\bigcup\{C_{e_j} : j>i, e_j\in\Delta(s^i) \}$ and $B:=\bigcup\{C_{e_j} : j>i, e_j\in\Delta(t^i), s^j\in R_d(s^i)\cup L_d(s^i)\}$. Without loss of generality, assume that $x_{e_i}>0$ (otherwise $C_{e_i}=\emptyset$).
    By (\ref{eq:flat}), one gets $|A\cup B|\leq |A|+|B|=K(\xi_i+\bar\xi_i)\leq \lfloor K(\theta-x_{e_i})\rfloor=\lfloor K\theta\rfloor-Kx_{e_i}=q-Kx_{e_i}$,
    thus $|A\cup B|+ Kx_{e_i}\leq q$. That is, the number of free colors is at least $Kx_{e_i}$, so let $C_{e_i}$ be any $Kx_{e_i}$ colors in $\{1,\dots,q\}\setminus(A\cup B)$.
    
    Let the desired $d$-distance matching $M_i$ consist of the edges with color $i$ for $i=1,\dots,q$. Set $\lambda_i=\frac{1}{K}$ for all $i=1,\dots,q$, and observe that both ${\sum_{i=1}^q\lambda_i\chi_{M_i}=x}$ and ${\sum_{i=1}^q\lambda_i=\sum_{i=1}^{q}\frac{1}{K}=\frac{q}{K}\leq\theta}$ hold.
    
    Now, we are ready to argue that there exists a $\lambda$-approximate solution among $M_1,\dots,M_q$. By contradiction, suppose that $\lambda w(M_i)<w(M^*)$ for each $i=1,\dots,q$, where $M^*$ is an optimal distance matching. Observe that
    $w\sum\nolimits_{i=1}^q\lambda_i\chi_{M_i}=\sum\nolimits_{i=1}^q\lambda_iw(M_i)< \tfrac{1}{\lambda}w(M^*)\sum\nolimits_{i=1}^q\lambda_i=w(M^*)$,
    that is, the LP optimum is strictly smaller than the IP optimum, which is a contradiction. Therefore, the largest weight $d$-distance matchings among $M_1,\dots,M_q$ are indeed $\lambda$-approximate. Since $\lambda=\sum_{i=1}^q\lambda_i\leq 2-\frac{1}{2d-1}$, the proof is complete.
    \qed
  \end{proof}

Note that the above approach is algorithmic, but it does not necessarily run in polynomial time --- since $q$ may be exponential in the size of the graph. The next section presents a polynomial-time method and re-proves that the integrality gap is at most $2-\frac{1}{2d-1}$.

  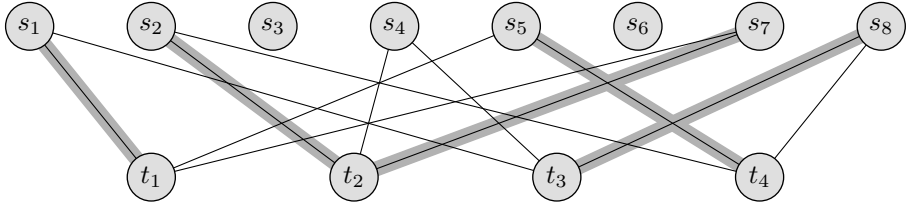
\begin{figure}
  \centering
  \begin{tikzpicture}
    \SetVertexMath
    \grEmptyPath[form=1,x=0,y=2,RA=1.6,rotation=0,prefix=s]{8}
    % \grEmptyPath[form=1,x=1.6,y=0,RA=7.5/3,rotation=0,prefix=t]{4}
    \Vertex[x=1.6,        y=0,L=t_1]{t0}
    \Vertex[x=1.6+5*1.6/3,  y=0,L=t_2]{t3}
    \Vertex[x=1.6+2*5*1.6/3,y=0,L=t_3]{t1}
    \Vertex[x=1.6+3*5*1.6/3,y=0,L=t_4]{t2}

    \draw[] (t0)--(s0);
    \draw[] (t0)--(s4);
    \draw[] (t0)--(s6);

    \draw[] (t1)--(s0);
    \draw[] (t1)--(s3);
    \draw[] (t1)--(s7);

    \draw[] (t2)--(s1);
    \draw[] (t2)--(s4);
    \draw[] (t2)--(s7);

    \draw[] (t3)--(s1);
    \draw[] (t3)--(s3);
    \draw[] (t3)--(s6);

    \begin{pgfonlayer}{background}
      \highlight{2mm}{black!30}{(t0.center) -- (s0.center)}
      \highlight{2mm}{black!30}{(t1.center) -- (s7.center)}
      \highlight{2mm}{black!30}{(t2.center) -- (s4.center)}
      \highlight{2mm}{black!30}{(t3.center) -- (s1.center)}
      \highlight{2mm}{black!30}{(t3.center) -- (s6.center)}
    \end{pgfonlayer}    
    % \draw[wavy] (t0) -- (s1);
    % \draw[wavy] (t1) -- (s2);

  \end{tikzpicture} 
  \caption{For $w\equiv 1$ and $d=5$, $x\equiv 1/2$ is an optimal solution to \ref{dmLp}, and the high\-lighted edges form an optimal 5-distance matching, hence the integrality gap is $\sfrac{6}{5}$.\looseness=-1}
  \label{fig:ipGapExample}
\end{figure}

\begin{remark}
  Figure~\ref{fig:ipGapExample} provides an example with (the largest known) integrality gap 6/5. Using this instance, one might easily derive an example (by adding two new nodes $t_5$ and $t_6$ to $T$, and two new edges $s_3t_5, s_6t_6$) for which no perfect $5$-distance matching exists, but there is a fractional perfect $5$-distance matching --- meaning that the integrality gap of~LP2 is unbounded as it was expected due to the complexity of the problem.
\end{remark}

In what follows, the integrality of \ref{dmLp} and LP2 %\ref{pdmLp}
are shown in special cases.

\begin{thm}\label{thm:LP12int}
  If $d=1$ or $d=2$, then both \ref{dmLp} and LP2 %\ref{pdmLp}
  are integral.
\end{thm}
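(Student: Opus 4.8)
The plan is to prove the stronger statement that the coefficient matrix $A$ of the constraints (\ref{eq:gyerekekDM})--(\ref{eq:autokDM}) is totally unimodular whenever $d\in\{1,2\}$. Since the right-hand side of \ref{dmLp} is the all-ones vector (hence integral), total unimodularity immediately yields that every vertex of the feasible region of \ref{dmLp} is integral; moreover, tightening a subset of the inequalities to equalities preserves integrality for a totally unimodular system, so the same conclusion holds for LP2, which is obtained from \ref{dmLp} by replacing (\ref{eq:gyerekekDM}) with equalities. Thus it suffices to analyse the two families of rows of $A$ for the two values of $d$ separately.

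For $d=1$ the argument is immediate. Here $R_1(s_i)=\{s_i\}$, so each inequality (\ref{eq:autokDM}) degenerates to the box constraint $x_{st}\le 1$, which is redundant and contributes only a single $1$ per row and column. The only substantive rows are those of (\ref{eq:gyerekekDM}), and each column $st$ has exactly one nonzero entry among them (the $1$ in the row of $s$). A $0/1$ matrix with at most one nonzero per column is trivially totally unimodular, so both \ref{dmLp} and LP2 are integral for $d=1$. This matches the observation in the introduction that $d=1$ is bipartite $b$-matching.

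For $d=2$ we have $R_2(s_i)=\{s_i,s_{i+1}\}$, so (\ref{eq:autokDM}) becomes the consecutive-pair constraints $x_{s_it}+x_{s_{i+1}t}\le 1$. I would establish total unimodularity of the resulting matrix along one of two routes. The first route realizes \ref{dmLp} for $d=2$ as the feasible-circulation polytope of an explicit directed graph, following the reduction to the circulation problem announced in the introduction: one builds a digraph in which feasible circulations correspond bijectively to feasible $x$, so that integrality of \ref{dmLp} follows from total unimodularity of the node--arc incidence matrix, and LP2 becomes a face of this polytope. The second route is a direct verification of the Ghouila--Houri criterion: every column of $A$ has at most three $1$'s --- the row of $s_i$ together with the (at most two) consecutive-pair rows $(s_{i-1},t)$ and $(s_i,t)$ --- and one must show that every subset of rows admits an equitable $\pm$ signing. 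The key structural fact I would exploit is that the ``conflict graph'' whose edges are the two-entry columns (the pairs of rows that such a signing must put into opposite classes) is bipartite: any cycle in it must cross the gap at some node $s_i$, and crossing forces the machine-paths to pair up, so every cycle is even.

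The main obstacle is the $d=2$ case, and more precisely the three-entry columns, i.e.\ those columns $s_it$ for which all of the rows $s_i$, $(s_{i-1},t)$ and $(s_i,t)$ are present. Such a column imposes only a ``not-all-equal'' requirement rather than an ``unequal'' requirement, which is not automatically satisfied by a proper two-colouring of the bipartite conflict graph above; reconciling these with the unequal constraints (for instance by augmenting the conflict graph with one carefully chosen edge per three-entry column while preserving bipartiteness) is the delicate step. For this reason I expect the circulation route to be the cleaner one to carry out, the crux there being to define the network so that its feasible circulations are LP-isomorphic to the solutions of \ref{dmLp}, after which integrality is inherited for free from the incidence matrix.
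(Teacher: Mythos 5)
Your plan coincides with the paper's proof in both structure and substance: the paper also proves the stronger claim that the constraint matrix is totally unimodular, handling $d=1$ by observing it is the incidence matrix of a bipartite graph and $d=2$ by exhibiting it as a network matrix (with respect to a directed caterpillar as the spanning tree), exactly the circulation route you say you would prefer. Your $d=1$ argument is complete and correct (with the minor remark that the degenerate rows of (\ref{eq:autokDM}) are still rows of $A$, so one should note that appending unit rows preserves total unimodularity -- equivalently, just observe as the paper does that column $st$ has one $1$ among the rows indexed by $S$ and one among the rows indexed by $S\times T$, so $A$ is a bipartite incidence matrix), and your reduction of LP2 to \ref{dmLp} via total unimodularity is sound. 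The one genuine gap is that for $d=2$ you never produce the digraph: you correctly identify that the three-entry columns $s_it$ (hitting rows $s_i$, $(s_{i-1},t)$ and $(s_i,t)$) are what makes a direct Ghouila--Houri or two-colouring argument delicate, and you defer to the circulation construction as ``the crux'' -- but that construction is the entire content of the $d=2$ case, so as written the proposal proves only half the theorem. To close it, you would need to exhibit a directed tree whose arcs are indexed by the rows of $A$ such that, for every edge $s_it$, the fundamental path of the corresponding non-tree arc traverses precisely the arcs $s_i$, $(s_{i-1},t)$, $(s_i,t)$ (those that exist), all forwards; the paper asserts that a directed caterpillar works but likewise omits the explicit construction.
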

  \begin{proof}
    For $d=1$, the matrix of \ref{dmLp} and LP2 is the incidence matrix of a bipartite graph, which is a well-known network matrix~\cite[Page 149]{AF11}.
    For $d=2$, one can easily construct a directed graph and a spanning tree (in this case a directed caterpillar) for which the corresponding network matrix is the matrix of \ref{dmLp} and LP2. As the right-hand side of both programs are integral and their matrices are network matrices (and hence totally unimodular) for $d=1,2$, the proof is complete. Note that the statement for $d=1$ also follows from Theorem~\ref{thm:gap}.
    \qed
  \end{proof}

 Note that the matrix of \ref{dmLp} and LP2 %\ref{pdmLp}
 is not totally unimodular for $d\geq 3$ if the input graph is the complete bipartite graph --- the technical proof is omitted here. Therefore, the proof of Theorem~\ref{thm:LP12int} can not work for $d\geq 3$ and one can not expect that \ref{dmLp} and LP2 remain integral. Having said that, LP2 still describes the convex hull of the integral solutions for $d=|T|$, but not because of total unimodularity:

\begin{thm}
  If $d=|T|$, then LP2 %\ref{pdmLp}
  is integral.
\end{thm}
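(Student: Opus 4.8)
The plan is to exploit the structure forced by $d=|T|$ to reduce LP2 to a Birkhoff--von Neumann problem on a $k\times k$ doubly stochastic matrix, where $k=|T|$. First I would record two structural consequences of $d=k$. Summing the degree equalities (the tightened form of~(\ref{eq:gyerekekDM})) over a full window $\{s_i,\dots,s_{i+k-1}\}$ gives $\sum_t\sum_{j=i}^{i+k-1}x_{s_jt}=k$; on the other hand each of the $k$ inner sums is bounded by the window constraint~(\ref{eq:autokDM}) for $R_d(s_i)=\{s_i,\dots,s_{i+k-1}\}$, so the total is at most $k$. Hence every such constraint is tight: $\sum_{j=i}^{i+k-1}x_{s_jt}=1$ for each color $t$ and each full window. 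Subtracting consecutive tight windows yields the periodicity $x_{s_it}=x_{s_{i+k}t}$ for all $t$ and all $i\le n-k$.

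Periodicity lets me collapse the variables to a $k\times k$ matrix $Y$ with $Y_{r,t}=x_{s_rt}$, where $r=((i-1)\bmod k)+1$ is the residue class of $i$. The row sums of $Y$ are $1$ by the degree equalities, and the column sums are $1$ by the tightness of the first full window $\{s_1,\dots,s_k\}$, so $Y$ is doubly stochastic. The only subtlety is the support: since $x_{s_jt}=0$ whenever $s_jt\notin E$, periodicity forces $Y_{r,t}=0$ unless $s_{r+mk}t\in E$ for every index $r+mk\le n$ in the residue class. I would therefore restrict to the bipartite pattern $E'$ of pairs $(r,t)$ for which all representatives of $r$ are adjacent to $t$, and apply the support-restricted Birkhoff--von Neumann theorem: the doubly stochastic matrices vanishing off $E'$ form an integral polytope whose vertices are the permutation matrices supported on $E'$. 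This writes $Y=\sum_\ell\mu_\ell P_\ell$ as a convex combination of such permutation matrices.

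It remains to lift each $P_\ell$, encoding a permutation $\pi_\ell$ with $(r,\pi_\ell(r))\in E'$, back to an integral perfect $d$-distance matching $M_\ell$ via the periodic rule $s_j\mapsto\pi_\ell(r_j)$. I would verify feasibility: every edge used lies in $E$ because $(r_j,\pi_\ell(r_j))\in E'$; two nodes receive the same color only if they share a residue, hence differ by a multiple of $k=d$, so the distance condition holds; and each $s_j$ is covered exactly once. Finally $\chi_{M_\ell}(s_jt)=(P_\ell)_{r_j,t}$ gives $x=\sum_\ell\mu_\ell\chi_{M_\ell}$, exhibiting $x$ as a convex combination of integral feasible points and proving integrality. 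The degenerate case $n<d$, in which the distance constraint is vacuous and LP2 is just the integral bipartite matching polytope, I would dispose of separately. The main obstacle is exactly the support bookkeeping: the graph is \emph{not} periodic, so I must ensure that collapsing to $Y$ and re-expanding preserves precisely the edge set $E$, which is what the definition of $E'$ secures; this is also why the argument yields integrality without total unimodularity of the original matrix.
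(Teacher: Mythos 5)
Your proof is correct, and it takes a genuinely different route from the paper's. The paper argues indirectly: it takes an optimal \emph{integral} solution $\tilde x$, supposes it is not LP-optimal, extracts via Farkas' lemma an improving direction $z$ with $Az=0$, observes that $Az=0$ forces $z_{s_it_j}=z_{s_{i'}t_j}$ whenever $i\equiv i'\pmod d$, collapses the system to a small one whose matrix is totally unimodular, and recovers an \emph{integral} improving direction --- a contradiction. You instead work directly with an arbitrary feasible point $x$ of LP2: you first show (by summing the degree equalities over a full window and comparing with the $|T|$ window constraints) that every full-window constraint is tight, deduce the periodicity $x_{s_it}=x_{s_{i+d}t}$ by subtracting consecutive tight windows, collapse $x$ to a $d\times d$ doubly stochastic matrix with support restricted to the residue classes all of whose representatives are adjacent to the given $t$, and invoke the (support-restricted) Birkhoff--von Neumann theorem to write $x$ as a convex combination of lifted permutation matrices, each of which you correctly verify to be a perfect $d$-distance matching. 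Both proofs hinge on the same periodicity-mod-$d$ phenomenon, but you apply it to the solution itself rather than to an improving direction; your version buys a stronger conclusion (an explicit description of the LP2 polytope as the convex hull of perfect $d$-distance matchings, together with a decomposition procedure) and avoids duality altogether, at the cost of the support bookkeeping you rightly flag, plus the separate treatment of the degenerate case $n<d$ (where, to be precise, the distance constraint is not vacuous but reduces LP2 to the ordinary bipartite perfect matching polytope, which is integral).
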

  \begin{proof}
    Let $A$ denote the matrix of LP2, and let $\tilde x$ be an optimal integral solution. If $\tilde x$ is not an optimal LP solution, then there is no complementary dual solution $y$, therefore --- by Farkas' lemma --- there exists $z\in\R^{E}$ for which
    \begin{subequations}
      \begin{align}
        \label{javIrBegin}
        wz>0\\
        Az=0\\
        \tilde x_{e}=1\Longrightarrow z_{e}\leq 0 &&\forall e\in E\\
        \tilde x_{e}=0\Longrightarrow z_{e}\geq 0 &&\forall e\in E.
        \label{javIrEnd} 
      \end{align}
    \end{subequations}
    Let $z^j=(z_{s_1t_j},z_{s_2t_j},\dots,z_{s_{|S|}t_j})$. Observe that $z^j_i=z^j_k$ for all $j=1,\dots,d$ whenever $i \equiv k \mod d$, which allows the  simplification of (\ref{javIrBegin})-(\ref{javIrEnd}). For all $i=1,\dots,d$ and $j=1,\dots,|T|$, let $\hat z^j_i$ be a new variable representing all variables $\{z^j_{i'}: i \equiv i' \mod d \}$, and consider the following formulation.
    \begin{subequations}
      \begin{align}
        \label{javIrBeginB}
        \max\hat w\hat z\nonumber\\
        \sum\limits_{i=1}^{d} \hat z^j_i=0 &&\forall j=1,\dots,|T|\\
        \sum\limits_{j=1}^{|T|}\hat z^j_{i}=0 && \forall i=1,\dots,d\\
        \hat z^j_i\leq 0 &&\forall s_it_j\in E : i\in\{1,\dots,d\}\text{ and }\tilde x_{s_it_j}=1\\
        \hat z^j_i\geq 0 &&\forall s_it_j\in E : i\in\{1,\dots,d\}\text{ and }\tilde x_{s_it_j}=0\\
        -\1\leq z\leq \1
        \label{javIrEndB}
      \end{align}
    \end{subequations}
    where $\hat w^j_i=\sum\{w_{i'}^j : i'\in\{1,\dots,|S|\} \text{ and } i'\equiv i\mod d\}$. Note that system (\ref{javIrBegin})-(\ref{javIrEnd}) has a feasible solution if and only if (\ref{javIrBeginB})-(\ref{javIrEndB}) has one with positive objective value. As the optimal value of (\ref{javIrBeginB})-(\ref{javIrEndB}) is finite and its matrix is totally unimodular (the incidence matrix of a bipartite graph and identity matrices under it), there is an integral solution $\hat z^*$ to (\ref{javIrBeginB})-(\ref{javIrEndB}) with a positive objective value. This particular solution corresponds to an integral solution $z^*$ to (\ref{javIrBegin})-(\ref{javIrEnd}) with the same positive weight. But this means that $\tilde x + z^*$ is an integral solution of LP2, for which $w\tilde x < w(\tilde x + z^*)$ holds, contradicting the fact that $\tilde x$ was an optimal integral solution.
    \qed
  \end{proof}
Note that the analogous statement for \ref{dmLp} does not hold.

\subsubsection{$( 2- \frac{1}{2d-1})$-approximation algorithm for the weighted $d$-distance matching}\label{sec:lpApxAlgo}
This section presents an ``almost greedy'' LP-based $(2-\frac{1}{2d-1})$-approxi\-ma\-tion algorithm and re-proves that the integrality gap is at most $\theta:=2-\frac{1}{2d-1}$.

\begin{algorithm}
  \caption{\hspace{0.5cm}$\theta$-approximation algorithm for the weighted distance matching problem}\label{alg:weighted2Approx}
  \begin{algorithmic}
    \State Let $e_1,\dots,e_m$ be a $\theta$-flat order with respect to a solution $x$ of \ref{dmLp} (see Lemma~\ref{lem:orderingNEW}).
    \State \textbf{procedure }\textsc{WdmLpApx}$(E,w)$
    \State $E:=E\setminus\{e\in E : w_e\leq0\}$
    \If{$E=\emptyset$}
    \State \textbf{return} $\emptyset$
    \EndIf
    \State Let $st$ be the first edge according to the above order that appears in $E$.
    \State $M':= \textsc{WdmLpApx}(E\setminus\{st\},w')$, where  $w':=w-w_{st}\chi_{\Delta(s)\cup\{s't\in\Delta(t) : s'\in R_d(s)\}}$
    \If{$M'\cup\{st\}$ is a feasible $d$-distance matching}
    \State \textbf{return } $M'\cup\{st\}$
    \Else
    \State \textbf{return } $M'$
    \EndIf
  \end{algorithmic}
\end{algorithm}

\begin{thm}\label{thm:algGap}
  Algorithm~\ref{alg:weighted2Approx} is a $\theta$-approximation algorithm for the weighted $d$-distance matching problem if a $\theta$-flat order of the edges is given in the first step of the algorithm.
\end{thm}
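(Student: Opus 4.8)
The plan is to prove, by induction on the recursion, the stronger invariant that the set $M$ returned by $\textsc{WdmLpApx}(E,w)$ is a feasible $d$-distance matching satisfying $\theta\cdot w(M)\ge\sum_{e\in E}w_e x_e$, where $x$ is the optimal solution of \ref{dmLp} underlying the given $\theta$-flat order (supplied by Lemma~\ref{lem:orderingNEW}) and $\theta=2-\frac{1}{2d-1}$. Applied to the top-level call this yields $\theta\,w(M)\ge wx\ge w(M^*)$ for an optimal integral matching $M^*$ (as \ref{dmLp} is a relaxation), which is precisely the $\theta$-approximation guarantee and at the same time re-proves that the integrality gap is at most $\theta$.

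The engine is a local-ratio weight-splitting step. After discarding the non-positively weighted edges, which only increases $\sum_{e\in E}w_e x_e$ because $x\ge 0$, let $st=e_i$ be the first surviving edge in the given order and put $C=\Delta(s)\cup\{s't\in\Delta(t):s'\in R_d(s)\}$, the one-sided conflict set the algorithm charges against. I would split $w=w_1+w_2$ with $w_1=w_{st}\chi_C$ and $w_2=w'=w-w_1$, exactly the decomposition performed in Algorithm~\ref{alg:weighted2Approx}. Since $(w_2)_{st}=0$, the recursive output $M'$ on $E\setminus\{st\}$ satisfies, by the inductive hypothesis, $\theta\,w_2(M)=\theta\,w_2(M')\ge\sum_{e\in E}(w_2)_e x_e$, because the $st$-term vanishes and $w_2(M)=w_2(M')$ irrespective of whether $st$ is added. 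It then remains only to establish the base inequality $\theta\,w_1(M)\ge\sum_{e\in E}(w_1)_e x_e$, i.e. $\theta\,|M\cap C|\ge x(C)$; adding the two inequalities gives the claim.

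Two facts close this gap. First, $x(C)\le\theta$: because the order of Algorithm~\ref{alg:orderingProc} lists all edges incident to $s_1$, then those incident to $s_2$, and so on, every later edge $e_j$ $(j>i)$ sharing the endpoint $t$ has its $S$-endpoint in $R_d(s)$ rather than strictly to the left of $s$; hence the surviving edges of $C$ are exactly $e_i$ together with the later edges counted by $\xi_i$ (those sharing $s$) and by $\bar\xi_i$ (those sharing $t$ with $S$-endpoint in $R_d(s)$), so $x(C)=x_{e_i}+\xi_i+\bar\xi_i\le\theta$ by $\theta$-flatness (Definition~\ref{def:flatOrder}). Second, $|M\cap C|\ge 1$: if $st\in M$ then $st\in\Delta(s)\subseteq C$; otherwise $M=M'$ and $M'\cup\{st\}$ was infeasible, so some $e\in M'$ conflicts with $st$, and since $M'\subseteq\{e_{i+1},\dots,e_m\}$ consists only of later edges, that conflicting edge either shares $s$ (so lies in $\Delta(s)\subseteq C$) or shares $t$ with an $S$-endpoint in $R_d(s)$ (so lies in $C$). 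Thus $\theta\,|M\cap C|\ge\theta\ge x(C)$, as required.

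The delicate point, and the step I expect to require the most care, is the interplay between the one-sided conflict set $C$ and the left-to-right order: the maximality bound $|M\cap C|\ge 1$ holds only because Algorithm~\ref{alg:orderingProc} has already eliminated every backward ($L_d$) conflict before $st$ is processed, so the feasibility test against $M'$ can be violated solely by an edge of $C$. Matching the sets underlying $x(C)$ to the quantities $\xi_i,\bar\xi_i$ correctly, in particular verifying that the $L_d$-part of $\bar\xi_i$ is empty among the later edges, is the bookkeeping one must get exactly right; the surrounding local-ratio induction is then routine.
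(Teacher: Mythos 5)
Your proof is correct and follows essentially the same route as the paper: the same induction on the edge set, the same local-ratio split $w=w'+w_{st}\chi_C$, and the same two inequalities ($x(C)\leq\theta$ from $\theta$-flatness and $|M\cap C|\geq 1$ from the feasibility test), which together constitute exactly the paper's one-line ``key observation'' $\theta(w-w')(M)\geq\theta w_{st}\geq(w-w')x$. Your explicit verification that the conflicting edge must lie in the one-sided set $C$ because the order of Lemma~\ref{lem:orderingNEW} processes $S$ left to right is a detail the paper leaves implicit, and you are right to flag it as the delicate point.
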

  \begin{proof}
    The proof is by induction on the number of edges. Let $M$ denote the distance matching found by \textsc{WdmLpApx}(E,w), and let $x$ be as defined in Algorithm~\ref{alg:weighted2Approx}.
    In the base case, if $E=\emptyset$, then $\theta w(M)\geq wx$ holds. Let $st\in E$ be the first edge with respect to the order of the edges used by Algorithm~\ref{alg:weighted2Approx}. By induction,
      $\theta w'(M')\geq w'x$
    holds for $M'= \textsc{WdmLpApx}(E\setminus\{st\},w')$, where $w'=w-w_{st}\,\chi_{\Delta(s)\cup\{s't\in\Delta(t) : s'\in R_d(s)\}}$.
    The key observation is that
    \begin{equation}\label{eq:lpApx2}
      \theta (w-w')(M)\geq \theta w_{st} \geq (w-w')x
    \end{equation}
    follows from the definition of $w'$ and the order of the edges. Hence, one gets
    \begin{equation}
      \theta w(M)=\theta (w-w')(M)+\theta w'(M)\geq (w-w')x+w'x=wx,
    \end{equation}
    where $w'(M)=w'(M')$ because $w'_{st}=0$.
    Therefore, $M$ is indeed a $\theta$-approximate solution, which completes the proof.
    \qed
  \end{proof}
  
Theorem~\ref{thm:algGap} also implies that the integrality gap of \textit{LP1} is at most $\theta$. Note that if we have a $\theta'$-flat order of the edges in the first step of Algorithm~\ref{alg:weighted2Approx}, then it outputs a $\theta'$-approximate solution. We believe that there always exists a $\theta'$-flat order of the edges for some $\theta'<\theta$, i.e. it is possible to improve Lemma~\ref{lem:orderingNEW}, which would automatically improve both the integrality gap and the approximation guarantee of the algorithm to $\theta'$.

\subsection{A combinatorial $(2-\frac{1}{d}$)-approximation algorithm}\label{sec:combApx}
This section pre\-sents a $(2-\frac{1}{d}$)-approximation algorithm for the weighted distance matching problem. Let $k\in\{d-1,\dots,3d-3\}$ be such that $2d-1$ divides $|S|+k$, and add $k$ new dummy nodes $s_{n+1},\dots,s_{n+k}$ to the end of $S$ in this order. Let us consider the extended node set in cyclic order. Observe that the new cyclic problem is equivalent to the original one. Let $H_j$ denote the subgraph of $G$ induced by $R_d(s_{j})\cup T$, where $R_d(s_{j})$ is the set consisting of node $s_{j}$ and the next \mbox{$d-1$ nodes} on its right in the new cyclic problem. For each such subgraph $H_j$, let $F_j$ denote a maximum-weight matching of it with respect to $w$. Let
\begin{equation*}
    G_i=(S_i,T;E_i)=\bigcup\limits_{j=0}^{\frac{n+k}{2d-1}-1} H_{i+j(2d-1)}    
\end{equation*}
and
\begin{equation*}
    M_i=\bigcup\limits_{j=0}^{\frac{n+k}{2d-1}-1} F_{i+j(2d-1)}
\end{equation*}
for $i=1,\dots,2d-1$, where $S_i\subseteq S$. %For each $i=1,\dots,2d-1$, compute a maximum-weight matching $M_i$ of $G_i$ and let ${i^*=\argmax\{w(M_i) : i=1,\dots,2d-1\}}$. For example, consider the graph in Figure~\ref{fig:combApxTight} with $d=3$. The edges of $M_4=F_4\cup F_9$ are the wavy ones, and the nodes of $S_i$ are highlighted.
Let ${i^*=\argmax\{w(M_i) : i=1,\dots,2d-1\}}$.
For example, consider the graph in Figure~\ref{fig:combApxTight} with $d=3$. The nodes of $G_4$ are highlighted on the figure and the edges of $M_4$ are the wavy ones. Nodes $s_6,\dots,s_{10}$ are the five dummy nodes.

Since $M_{i^*}$ can be computed in strongly polynomial time, we obtain a strongly-polynomial-time $(2-\frac{1}{d})$-approximation algorithm by the following theorem.
\begin{thm}\label{thm:combApx}
    $M_{i^*}$ is a $(2-\frac{1}{d})$-approximate $d$-distance matching.
\end{thm}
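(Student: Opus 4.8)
The plan is to show two bounds that together yield the approximation ratio. Let $M^*$ denote an optimal weighted $d$-distance matching of the whole (cyclic) graph. The first goal is a lower bound on $w(M_{i^*})$ in terms of the quantities $w(M_i)$, and the second is to relate $\sum_{i=1}^{2d-1} w(M_i)$ to $w(M^*)$. Since $i^*$ maximizes $w(M_i)$, we immediately have $(2d-1)\,w(M_{i^*}) \geq \sum_{i=1}^{2d-1} w(M_i)$, so it suffices to prove $\sum_{i=1}^{2d-1} w(M_i) \geq (2d-1)\cdot\frac{d}{2d-1}\, w(M^*) = d\, w(M^*)$, i.e. the average of the $w(M_i)$ is at least $\frac{d}{2d-1}w(M^*)$.

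First I would check that each $M_i$ is genuinely a feasible $d$-distance matching. The subgraphs $H_{i+j(2d-1)}$ for fixed $i$ and $j=0,\dots,\frac{n+k}{2d-1}-1$ partition the cyclic index set into consecutive blocks of $d$ nodes of $S$, and consecutive chosen blocks are separated by a gap of $d-1$ nodes (since the stride is $2d-1 = d + (d-1)$). Each $F_j$ is an ordinary matching inside a single block, so within a block no two edges share a $T$-node; across two different blocks $H_{i+j(2d-1)}$ and $H_{i+(j+1)(2d-1)}$, any two $S$-nodes come from windows that are at least $d$ apart in cyclic distance, so even if both use the same $t\in T$ the $d$-distance condition $|p-q|\geq d$ is satisfied. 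This establishes feasibility of each $M_i$, hence of $M_{i^*}$.

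The heart of the argument is the averaging bound, and this is the step I expect to be the main obstacle. The idea is to charge the weight of $M^*$ against the windows. Consider the edges of $M^*$; for each starting index $i\in\{1,\dots,2d-1\}$, restrict $M^*$ to the union of windows $R_d(s_{i+j(2d-1)})$. Because each $F_j$ is a \emph{maximum}-weight matching of $H_{i+j(2d-1)}$, and the restriction of $M^*$ to that window is a feasible matching there, we get $w(F_{i+j(2d-1)}) \geq w(M^*\cap \Delta(R_d(s_{i+j(2d-1)})))$, and summing over $j$ gives $w(M_i) \geq w(M^*\cap E(G_i))$. Summing this over all $2d-1$ shifts $i$, the key counting fact is how many times each edge of $M^*$ is counted: an edge $s_pt\in M^*$ lies in window $R_d(s_q)$ precisely when $q\in\{p-d+1,\dots,p\}$ (cyclically), i.e. for exactly $d$ values of the window-start $q$. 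Among these $d$ consecutive starting positions, I must count how many are $\equiv i \pmod{2d-1}$ for the summation over $i$; since $d \leq 2d-1$, each residue class meets a block of $d$ consecutive integers either once or zero times, and summing over all residues $i$ counts each edge exactly $d$ times. Hence $\sum_{i=1}^{2d-1} w(M_i) \geq \sum_{i=1}^{2d-1} w(M^*\cap E(G_i)) = d\, w(M^*)$.

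Combining the two inequalities yields $(2d-1)\,w(M_{i^*}) \geq d\,w(M^*)$, i.e. $w(M_{i^*}) \geq \frac{d}{2d-1}\,w(M^*) = \frac{1}{2-\frac{1}{d}}\,w(M^*)$, which is exactly the claimed $(2-\frac{1}{d})$-approximation. The delicate point to get right is the double-counting in the averaging step: one must verify that the $G_i$ collectively cover each edge of $M^*$ exactly $d$ times, which hinges on the arithmetic that $d$ consecutive admissible window-starts distribute across the $2d-1$ residue classes with total multiplicity $d$. I would also need to confirm that the dummy-node padding and the passage to the cyclic problem do not change the optimum (this is asserted when the construction is set up, so I would invoke the equivalence of the cyclic and padded problem stated there), and that using a maximum-weight matching $F_j$ — rather than insisting on the $d$-distance structure within a window — is legitimate, which holds because any matching inside a single window of $d$ consecutive $S$-nodes automatically satisfies the $d$-distance condition vacuously only across $T$-conflicts; in fact within one window two edges to the same $t$ would violate the matching constraint, so $F_j$ being a matching is the right and sufficient requirement.
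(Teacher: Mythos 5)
Your proposal is correct and follows essentially the same route as the paper's proof: feasibility of each $M_i$ from the $d$-node gap between consecutive windows, the bound $w(M_i)\geq w(M^*\cap E(G_i))$ from the maximality of each $F_j$, and the averaging argument in which each edge of $M^*$ is counted exactly $d$ times across the $2d-1$ shifts. The paper phrases the double counting via per-node weights $\mu_s$ rather than restricted edge sets, but the content is identical.
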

\begin{proof}
    Each node of $S$ is covered by at most one edge of $M_i$, as $M_i$ is the union of matchings no two of which cover the same node of $S$. If $s_it,s_jt\in M_i$, then $s_it$ and $s_jt$ belong to two distinct matchings $F_k, F_l\subseteq M_i$ for some $k,l$, hence $|j-i|\geq d$. From this, the feasibility of $M_i$ follows for all $i=1,\dots,2d-1$, and $M_{i^*}$ being one of them, it is feasible as well.

    To show the approximation guarantee, let $M^*$ be an optimal $d$-distance matching. For each node $s\in S$, let $\mu_s\in\R_+$ denote the weight of the edge covering $s$ in $M^*$ and zero if $M^*$ does not cover $s$. Note that $\sum_{s\in S}\mu_s=w(M^*)$ by definition,
    and
    \begin{equation}\label{eq:wcaEq}
        \sum_{s\in S_i}\mu_s\leq w(M_i)
    \end{equation}
    follows because $\sum_{s\in S_i}\mu_s$ is the weight of a $d$-distance matching which covers no nodes outside $G_i$. Observe that
    \begin{equation}\label{eq:wcaComp}
        dw(M^*) = d\sum_{s\in S}\mu_s = \sum_{i=1}^{2d-1}\sum_{s\in S_i}\mu_s\leq \sum_{i=1}^{2d-1}w(M_i)\leq (2d-1)w(M_{i^*})
    \end{equation}
    holds, where the second equation holds because $\mu_s$ occurs $d$ times as a summand in $\sum_{i=1}^{2d-1}\sum_{s\in S_i}\mu_s$ for all $s\in S$, the first inequality follows from~(\ref{eq:wcaEq}), while the last one holds because $M_{i^*}$ is a largest-weight $d$-distance matching among $M_1,\dots,M_{2d-1}$.
    By (\ref{eq:wcaComp}), one gets $w(M^*)\leq (2-\frac{1}{d})w(M_{i^*})$, which completes the proof of the theorem.
    \qed
\end{proof}

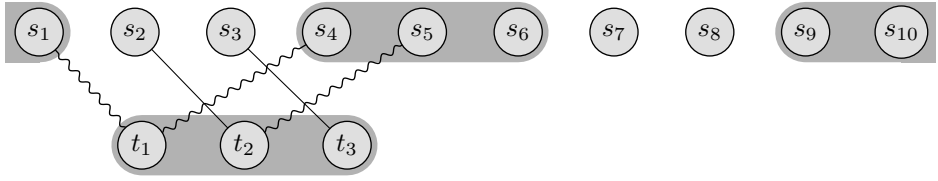
\begin{figure}
\centering
    \begin{tikzpicture}[xscale=.9]
      \SetVertexMath
      \grEmptyPath[form=1,x=0,y=1.5,RA=1.4,rotation=0,prefix=s]{10}
      \grEmptyPath[form=1,x=1.5,y=0,RA=1.5,rotation=0,prefix=t]{3}
      \draw[wavy] (t0)--(s0);
      \draw[] (t1)--(s1);
      \draw[] (t2)--(s2);
      \draw[wavy] (t0)--(s3);
      \draw[wavy] (t1)--(s4);
      
      \begin{pgfonlayer}{background}
      \highlight{8mm}{black!30}{(s0.center) -- (s0.center)}
      \highlight{8mm}{black!30}{(s3.center) -- (s5.center)}
      \highlight{8mm}{black!30}{(s8.center) -- (s9.center)}
      \highlight{8mm}{black!30}{(t0.center) -- (t2.center)}
      \fill [black!30] (9*1.4,1.5-.4) rectangle (9*1.4+.5,1.5+.4);%add rectangle to s_{10}
      \fill [black!30] (-.5,1.5-.4) rectangle (0,1.5+.4);%add rectangle to s_1
      \end{pgfonlayer}
    \end{tikzpicture} 
    \caption{Tight example for Theorem~\ref{thm:combApx} in the case $d=3$. The wavy edges form a possible output of the algorithm. (Recall that the nodes of $S$ are in cyclic order.)}
    \label{fig:combApxTight}
\end{figure}
The analysis is tight in the sense that, for every $d\in\Z_+$, there exists a graph $G$ for which the algorithm returns a $d$-distance matching $M$ for which $w(M^*)=(2-\frac{1}{d})w(M)$, where $M^*$ is an optimal $d$-distance matching. Let $S$ and $T$ consist of $2d-1$ and $d$ nodes, respectively. Add edge $s_it_i$ for $i=1,\dots,d$, and edge $s_{i+d}t_i$ for $i=1,\dots,d-1$. Note that the edge set is a feasible $d$-distance matching itself, and the above algorithm returns a matching that covers exactly $d$ nodes of $S$. Hence the approximation ratio of the found solution is $\frac{2d-1}{d}$. Figure~\ref{fig:combApxTight} shows the construction for $d=3$, where $s_6,\dots,s_{10}$ are the dummy nodes.

\section{Unweighted $d$-distance matching}\label{sec:uwdm}
First, two refined greedy approaches are considered for the unweighted case, then the analysis of the approximation ratio of locally optimal solutions follows.

\subsection{Greedy algorithms}\label{sec:uwGreedy}
This section describes two refined greedy algorithms for the unweighted $d$-distance matching problem, and proves that both of them achieve an approximation guarantee of 2.
\begin{algorithm}
  \caption{\hspace{0.5cm}\textsc{$S$-Greedy}}
  \label{alg:greedyS}
  \begin{algorithmic}

    \State Let $s_1,\dots,s_n$ be the nodes of $S$ in the given order %\\and $t_1,\dots,t_k$ denote the nodes of $T$ in arbitrarily order.
    \State $M:=\emptyset$
    \For{$i=1,\dots,n$}
    \If{$M\cup\{s_it\}$ is feasible for some $s_it\in\Delta(s_i)$}
    \State $j:=\argmin\{j : s_it_j\in\Delta(s_i)$ and $M\cup\{s_it_j\}$ is feasible $\}$
    \State $M:=M\cup\{s_it_j\}$
    \EndIf
    \EndFor
    \State \textbf{output} $M$
  \end{algorithmic}
\end{algorithm}

\begin{thm}\label{thm:SGreedy}
  \textsc{$S$-Greedy} is a 2-approximation algorithm for the unweighted $d$-distance matching problem.
\end{thm}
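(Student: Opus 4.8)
The plan is to analyze \textsc{$S$-Greedy} by a charging argument that assigns each edge of an optimal solution $M^*$ to a nearby edge selected by the greedy algorithm, and then to bound how many optimal edges can be charged to a single greedy edge. Let $M$ denote the output of \textsc{$S$-Greedy} and fix an optimal $d$-distance matching $M^*$. The goal is to show $|M^*| \leq 2|M|$. First I would show that every edge of $M^*$ incident to some $s_i$ must ``conflict'' with an edge already in $M$ at the moment $s_i$ is processed --- otherwise the greedy rule would have covered $s_i$. The crucial point is that once the algorithm reaches $s_i$ and declines to add any feasible edge, this can only happen because $s_i$ has no feasible option at all, which forces every neighbor $t$ of $s_i$ to be blocked by some earlier edge $s_{i'}t \in M$ with $s_{i'} \in L_d(s_i)$.

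Next I would set up the charging map. For each edge $s_it \in M^*$, I would charge it to a well-chosen edge of $M$: either to the $M$-edge covering $s_i$ itself (if $s_i$ is covered in $M$), or, if $s_i$ is uncovered in $M$, to the earlier $M$-edge $s_{i'}t$ on the same node $t\in T$ that blocked the greedy choice at $s_i$. The key structural claim is then that each edge $e=s_{i'}t \in M$ receives at most two charges: at most one ``direct'' charge from an $M^*$-edge covering the same node $s_{i'}$, and at most one ``blocking'' charge from an $M^*$-edge $s_it$ on the same $t$ with $s_i$ to the right of $s_{i'}$ within distance $d$. The reason at most one blocking charge can occur is that any two such charging edges $s_it, s_jt \in M^*$ would both lie in $R_d(s_{i'})$, and being edges of the feasible matching $M^*$ sharing the endpoint $t$ they must satisfy $|i-j|\geq d$, so at most one of them can fall in the window of length $d$ immediately to the right of $s_{i'}$.

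I would then combine these observations: since the charging map is well-defined on all of $M^*$ and no edge of $M$ receives more than two charges, we get $|M^*| \leq 2|M|$, which is exactly the claimed $2$-approximation guarantee. The tightness example in Figure~\ref{fig:unwGreedyTight} confirms the bound cannot be improved for this algorithm, since there \textsc{$S$-Greedy} selects a single edge while the optimum has two.

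The main obstacle I anticipate is the precise bookkeeping of the blocking charges, specifically verifying that the blocking edge $s_{i'}t \in M$ chosen when $s_i$ is processed indeed lies in $L_d(s_i)$ (equivalently $s_i \in R_d(s_{i'})$), and that two distinct optimal edges sharing a node $t$ cannot both blame the same greedy edge. This requires carefully exploiting both the feasibility of $M^*$ (which gives the $|i-j|\geq d$ separation of edges at a common $t$) and the exact semantics of ``blocked'' in the greedy step (which ties the blocking edge's left endpoint to the window $L_d(s_i)$). Handling the case distinction between covered and uncovered $s_i$ cleanly, so that the two charge types never collide on the same $M$-edge, is where the argument must be most careful.
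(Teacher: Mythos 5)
Your proof is correct, but it takes a genuinely different route from the paper's. The paper recycles the exchange argument of Theorem~\ref{thm:wGreedy}: it defines $M_i$ as a largest feasible solution containing the first $i$ greedy edges $f_1,\dots,f_i$, and shows that appending $f_{i+1}$ forces the deletion of at most \emph{two} edges of $M_i$ (rather than three as in the weighted case), because a third conflicting edge would have its $S$-endpoint in the window to the left of $f_{i+1}$'s $S$-endpoint and \textsc{$S$-Greedy} would have selected it when that earlier node was processed; telescoping $\theta_{i+1}\geq\theta_i-1$ then yields $2|M|\geq|M^*|$. You instead charge each edge of $M^*$ directly to an edge of the output $M$ --- to the $M$-edge sharing its $S$-endpoint, or, when that endpoint is uncovered in $M$, to the blocking $M$-edge sharing its $T$-endpoint and lying in the left window --- and verify that every $M$-edge absorbs at most one charge of each kind, the bound on blocking charges holding because two $M^*$-edges at the same $t\in T$ whose $S$-endpoints both lie within $d-1$ positions to the right of the blocking edge would violate the distance constraint of $M^*$. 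Both arguments hinge on the same structural fact (conflicts created by a left-to-right scan are one-sided), but yours is self-contained and avoids the auxiliary optima $M_i$ and the induction on $\theta_i$, while the paper's has the benefit of uniformity with the weighted analysis it follows. The delicate points you flag all check out: the blocking edge's $S$-endpoint lies in $L_d(s_i)$ because at the moment $s_i$ is processed only earlier nodes carry $M$-edges and those edges persist to the end; the blocking edge is unique because $M$ itself is feasible; and a direct charge and a blocking charge to the same $M$-edge originate from distinct $M^*$-edges, so the two counts simply add to two.
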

  \begin{proof}
    Assume that \textsc{$S$-Greedy} returns edges $f_1,\dots,f_p$, and it selects them in this order.
    Let $M_i$ and $\theta_i$ be as above in the proof of Theorem~\ref{thm:wGreedy}, i.e. let $M_i=\argmax\{w(M) : f_1,\dots,f_i\in M \text{ and $M$ is a $d$-distance matching}\}$ and let $\theta_i$ denote the weight of $M_i$ for $i=0,\dots,p$.
    Observe that, as opposed to the proof of Theorem~\ref{thm:wGreedy}, there exist two edges $e,e'\in M_i\setminus\{f_1,\dots,f_i\}$ such that $(M_i\setminus\{e,e'\})\cup\{f_{i+1}\}$ is a feasible $d$-distance matching containing edges $f_1,\dots,f_{i+1}$. Indeed, if there were three edges to leave out, then one of them would be incident to $\{s_{i-d+1},\dots,s_{i-1}\}$, but then Algorithm~\ref{alg:greedyS} would have picked this edge instead of $f_{i+1}$. By the greedy selection rule, one gets
    \begin{equation}\label{eq:greedyProgressUnw}
      \theta_{i+1}\geq\theta_i+1-1-1=\theta_i-1
    \end{equation}
    holds for all $i=0,\dots,p-1$. A straightforward inductive argument shows that (\ref{eq:greedyProgressUnw}) implies $\theta_p\geq\theta_0-p$, therefore $2\theta_p\geq\theta_0$ follows, which completes the proof.

    The analysis is tight in the sense that \textsc{$S$-Greedy} might return only one edge, while the largest $2$-distance matching consists of 2 edges, see Figure~\ref{fig:unwGreedyTight}.
    \qed
  \end{proof}

\begin{algorithm}
  \caption{\hspace{0.5cm}\textsc{$T$-Greedy}}
  \label{alg:greedyT}
  \begin{algorithmic}

    \State Let $s_1,\dots,s_n$ be the nodes of $S$ in the given order %\\and $t_1,\dots,t_k$ denote the nodes of $T$ in arbitrarily order.
    \State $M:=\emptyset$
    \State $C:=\emptyset$
    \For{$j=1,\dots,k$}
    \State $M_j:=\emptyset$
    \State $C_j:=\emptyset$
    \State $i:=1$
    \While{$i\leq n$}
    \If{$s_it_j\in E$ and $s_i\notin C$}
    \State $M_j:=M_j\cup\{s_it_j\}$
    \State $C_j:=C_j\cup\{s_i\}$
    \State $i:=i+d$
    \Else
    \State $i:=i+1$
    \EndIf
    \EndWhile
    \State $M:=M\cup M_j$
    \State $C:=C\cup C_j$
    \EndFor
    \State \textbf{output} $M$
  \end{algorithmic}
\end{algorithm}

\begin{thm}\label{thm:TGreedy}
  \textsc{$T$-Greedy} is a 2-approximation algorithm for the unweighted $d$-distance matching problem.
\end{thm}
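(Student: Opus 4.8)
The goal is to show that \textsc{$T$-Greedy} (Algorithm~\ref{alg:greedyT}) is a 2-approximation in the unweighted case, i.e. that $|M|\geq\frac{1}{2}|M^*|$ for an optimal $d$-distance matching $M^*$. The plan is to analyze the algorithm one color class at a time, comparing the set $C_j$ of $S$-nodes that \textsc{$T$-Greedy} assigns to $t_j$ against the nodes that the optimum assigns to $t_j$, while carefully accounting for the interaction through the shared "covered" set $C$.

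\textbf{Approach.} First I would set up the comparison between $M$ and an optimal $M^*$. The natural quantity to bound is the number of $S$-nodes covered by $M$. For each $t_j$, the inner \texttt{while}-loop is a leftmost-greedy packing: it scans $s_1,\dots,s_n$ and, whenever it finds an available neighbor $s_i$ of $t_j$ (one not already used by a previous color and compatible with the $d$-distance rule for $t_j$, which is enforced by the jump $i:=i+d$), it grabs $s_i$ and skips the next $d-1$ indices. The key structural fact to establish is that for each fixed $t_j$, this leftmost scan is optimal for $t_j$ relative to the already-forbidden set $C$: among all feasible single-color $d$-distance matchings using only $t_j$ and only $S$-nodes outside the current $C$, the greedy leftmost choice covers the maximum possible number of nodes. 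This is the standard exchange/interval-scheduling argument, since the $d$-distance constraint for a single $t_j$ is exactly a minimum-gap-$d$ packing on the line.

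\textbf{Key steps in order.} (1) Fix the optimal solution $M^*$ and, for each $j$, let $M^*_j=\{s_it_j\in M^*\}$ be its $t_j$-class. (2) Establish the single-color optimality lemma above for the leftmost greedy packing. (3) Process the colors in the order $j=1,\dots,k$ used by the algorithm, maintaining the invariant that after processing color $j$, the total number of nodes covered so far, $|C_1\cup\dots\cup C_j|$, is at least $\frac{1}{2}\bigl|(M^*_1\cup\dots\cup M^*_j)\text{'s covered }S\text{-nodes}\bigr|$ minus a controlled loss. The crucial charging argument is that when \textsc{$T$-Greedy} fails to cover a node $s_i$ that $M^*$ covers with some color $t_{j'}$, the reason is that $s_i$ was already taken by an earlier color, or a greedy-chosen node sits within distance $d$; each such blocked optimum-node can be charged to a distinct greedy-chosen node, and the factor $2$ comes from showing at most two optimum edges are blocked per greedy edge — exactly analogously to the $\theta_{i+1}\geq\theta_i-1$ bound of Theorem~\ref{thm:SGreedy}.

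\textbf{Main obstacle.} The delicate point is the interaction between color classes through the global set $C$: an $S$-node claimed by \textsc{$T$-Greedy} for color $t_j$ may be precisely a node the optimum would have preferred to use with a different color $t_{j'}$, and because the algorithm commits colors sequentially it cannot revisit this choice. The hardest part of the proof will be setting up a charging scheme (or an amortized potential, mirroring $\theta_i$ from the proof of Theorem~\ref{thm:SGreedy}) that simultaneously handles both the within-color blocking (the $d$-gap skips, costing up to one neighbor on each side) and the cross-color blocking (a node stolen by an earlier color), and showing these together block at most two optimum edges per greedy edge. I would expect the cleanest route is to reduce \textsc{$T$-Greedy} to the same $\theta_i$-type induction as Theorem~\ref{thm:SGreedy}: define $\theta_i$ as the max-weight $d$-distance matching extending the greedy edges chosen so far, and argue that adding each greedy edge destroys at most two optimum edges, yielding $\theta_p\geq\theta_0-p$ and hence $2|M|\geq|M^*|$.
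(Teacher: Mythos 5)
Your proposal goes down a genuinely different road from the paper. The paper's proof is a one-step reduction: it shows that \textsc{$T$-Greedy} and \textsc{$S$-Greedy} output the \emph{same} edge set (by taking the first node $s_i$ of $S$ where the two outputs differ and the smallest index $j$ with $s_it_j$ in the symmetric difference, then ruling out both $j'<j$ and $j'>j$ for the node each algorithm actually uses), so the factor $2$ is inherited verbatim from Theorem~\ref{thm:SGreedy}. Your plan instead re-runs the $\theta_i$-exchange argument directly in \textsc{$T$-Greedy}'s selection order. That route can be made to work and is a legitimate alternative, but as written it has a gap exactly where all the content lives.

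The gap: you assert that adding each greedy edge destroys at most two optimum edges ``exactly analogously'' to Theorem~\ref{thm:SGreedy}, but the reason only two (rather than three) edges of $M_i\setminus\{f_1,\dots,f_i\}$ can conflict with $f_{i+1}=s_at_j$ is \emph{not} the same, because \textsc{$T$-Greedy} does not process $S$ left to right globally. The three potential conflicts are one edge at $s_a$ and one edge $s_bt_j$ on each side with $0<|a-b|<d$; to exclude the left one ($b<a$) you need a two-case argument specific to \textsc{$T$-Greedy}: if $s_b\in C$ when color $j$ was processed, then some $f_{i'}=s_bt_{j'}$ with $j'<j$ already lies in $M_i$, so $s_bt_j\in M_i$ would put two $M_i$-edges on $s_b$; and if $s_b\notin C$, then the left-to-right scan for $t_j$ either would have picked $s_bt_j$ (making it an $f$-edge) or skipped index $b$ because of an earlier pick $s_ct_j\in\{f_1,\dots,f_i\}$ with $b-c<d$, contradicting the feasibility of $M_i$. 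You also need to verify that \textsc{$T$-Greedy}'s output is maximal so that $\theta_p=|M|$; this holds by a similar case split. Your first proposed route (the color-by-color invariant with a cross-color charging scheme) is left unresolved by your own account and I would drop it in favour of the $\theta_i$ argument above --- or, more simply, prove the paper's identity that the two greedy outputs coincide and cite Theorem~\ref{thm:SGreedy}.
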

  \begin{proof}
    Let $M_S$ and $M_T$ denote the edge sets \textsc{$T$-Greedy} (Algorithm~\ref{alg:greedyT}) and \textsc{$S$-Greedy} (Algorithm~\ref{alg:greedyS}) outputs, respectively. It suffices to prove that $M_S=M_T$. By contradiction, suppose that $M_S\neq M_T$. Let $s_i$ be the first node in $S$ for which $\Delta(s_i)\cap M_S\neq \Delta(s_i)\cap M_T$, and choose the edge $s_it_j\in\Delta(s_i)\cap( M_S\Delta M_T)$ such that $j$ is the smallest possible.

    Case 1: $s_it_j\in M_S\setminus M_T$. First, observe that $M_T$ covers node $s_i$, otherwise it would have included $s_is_t$. Therefore, \textsc{$T$-Greedy} assigns node $s_i$ to $t_{j'}$, where $j'\neq j$. If $j'<j$, then \textsc{$S$-Greedy} would have chosen edge $s_it_{j'}$ instead of $s_it_j$. If $j'>j$, then \textsc{$T$-Greedy} would have included $s_it_j$ instead of $s_it_{j'}$ to $M_T$.

    Case 2: $s_it_j\in M_T\setminus M_S$. Observe that $M_S$ covers node $s_i$, otherwise \textsc{$S$-Greedy} could have included edge $s_is_t$. Therefore, \textsc{$S$-Greedy} assigns node $s_i$ to $t_{j'}$, where $j'\neq j$. Similarly to the argument in Case~1, it is easy to see that neither $j'<j$ nor $j'>j$ is possible.

    Figure~\ref{fig:unwGreedyTight} shows that the approximation ratio is tight.
    \qed
  \end{proof}

\subsection{Local search}\label{sec:localSearch}
This section investigates the approximation ratio of the so-called locally optimal solutions. First, consider the following notion, which plays a central role throughout the section.
\begin{defn}\label{defn:hitset}
  Given an edge $e^*\in E$, let $\hit(e^*,M)\subseteq M$ denote the inclusion-wise minimal subset of $M$ for which $M\setminus\hit(e^*,M)\cup\{e^*\}$ is a feasible $d$-distance matching.
\end{defn}

We say that an edge $e^*$ \emph{hits} the edges of $\hit(e^*,M)$, or that $\hit(e^*,M)$ is the \textit{hit set} of edge $e^*$ with respect to $M$. Similar notation and terminology are used for a subset of the edges as follows.

\begin{defn}
  Given an edge set $X\subseteq E$, let $\hit(X,M)\subseteq M$ denote the set of edges hit by at least one edge in $X$, i.e. let $\hit(X,M) = \bigcup_{e^*\in X}\hit(e^*,M)$.
\end{defn}

\begin{defn}
%  A $d$-distance matching $M$ is \textbf{$l$-locally optimal} if for each $X\subseteq E\setminus M, |X|\leq l$ there exists no $Y\subseteq M$ s.t. $|Y|<l$ and $(M\setminus Y)\cup X$ is $d$-distance matching. In other words, $M$ is \textbf{$l$-locally optimal} if there exists no $X\subseteq E\setminus M$ s.t. $l\geq |X| > |\hit(X,M)|$. Similarly, $M$ is \textbf{$l$-locally optimal with respect to $M^*$} if $M$ is $l$-locally optimal in $G'=(S,T;M\cup M^*)$, where $M^*\subseteq E$.\\
A $d$-distance matching $M$ is \textbf{$l$-locally optimal} if there exists no $d$-distance matching $X\subseteq E\setminus M$ such that $l\geq |X| > |\mathcal{H}(X,M)|$. Similarly, $M$ is \textbf{$l$-locally optimal with respect to $M^*$} if there exists no $X\subseteq M^*\setminus M$ such that $l\geq |X|>|\mathcal{H}(X,M)|$, where $M^*$ is an $d$-distance matching.
\end{defn}

For unit weights, the possible outputs of \textsc{Greedy} (Algorithm~\ref{alg:greedy}) are exactly the $1$-locally optimal solutions.

\begin{cl}
    A $d$-distance matching $M$ is $1$-locally optimal if and only if there exists a permutation of $E$ such that \textsc{Greedy} outputs $M$ for $w\equiv 1$.
\end{cl}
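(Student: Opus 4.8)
The plan is to prove both directions of the equivalence. For the statement, I need to show that a $d$-distance matching $M$ is $1$-locally optimal if and only if some permutation of $E$ makes \textsc{Greedy} (with $w\equiv 1$) output exactly $M$.

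First I would unpack the definition of $1$-local optimality in the unweighted setting. By definition, $M$ is $1$-locally optimal if there is no singleton $X=\{e^*\}\subseteq E\setminus M$ with $1\geq |X|>|\hit(X,M)|$; since $|X|=1$, this says there is no edge $e^*\notin M$ with $|\hit(e^*,M)|=0$. But $|\hit(e^*,M)|=0$ means exactly that $M\cup\{e^*\}$ is already a feasible $d$-distance matching. So $1$-local optimality is equivalent to the purely combinatorial condition that $M$ is a \emph{maximal} feasible $d$-distance matching: no edge outside $M$ can be added while preserving feasibility. This reduction is the conceptual heart of the argument, and I would state it as the first step.

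For the ``if'' direction, suppose a permutation of $E$ exists for which \textsc{Greedy} outputs $M$. The greedy rule adds $e_i$ whenever $M\cup\{e_i\}$ is feasible, so at termination no remaining edge can be added to $M$ without violating feasibility — otherwise \textsc{Greedy} would have taken it when it was scanned. Hence $M$ is maximal, and by the equivalence above, $1$-locally optimal. For the ``only if'' direction, assume $M$ is $1$-locally optimal, i.e. maximal. I would construct an explicit permutation: list the edges of $M$ first (in any order), followed by the edges of $E\setminus M$. Running \textsc{Greedy} on this order, every edge of $M$ is accepted in turn because $M$ itself is a feasible $d$-distance matching, so each prefix $\{f_1,\dots,f_i\}\subseteq M$ is feasible and the next edge of $M$ keeps it feasible. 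Once all of $M$ is selected, each remaining edge $e\in E\setminus M$ is rejected precisely because $M\cup\{e\}$ is infeasible, which holds by the maximality of $M$. Thus \textsc{Greedy} outputs exactly $M$.

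I expect the main obstacle to be not any single step but verifying the greedy dynamics cleanly in the ``only if'' direction: one must check that when the edges of $M$ are presented first, no earlier edge of $M$ blocks a later edge of $M$, which is immediate since all of $M$ is simultaneously feasible, and that the subsequent rejections are forced. The potential subtlety is making sure the feasibility check in \textsc{Greedy} uses $M\cup\{e_i\}$ against the \emph{current} partial solution rather than all of $E$; since the partial solution after processing all of $M$ equals $M$, the rejection condition coincides exactly with maximality. Once the equivalence ``$1$-locally optimal $\Longleftrightarrow$ maximal'' is isolated at the start, both implications become short, so I would front-load that observation and keep the two directions as brief verifications.
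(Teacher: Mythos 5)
Your proof is correct and follows essentially the same route as the paper: the forward direction argues that \textsc{Greedy} cannot terminate with an augmentable matching, and the reverse direction uses the same permutation (edges of $M$ first, the rest after), exploiting that $w\equiv 1$ makes any permutation a valid non-increasing order. Your explicit identification of $1$-local optimality with maximality is a clean way to phrase what the paper uses implicitly.
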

\begin{proof}
    If $M$ is the output of \textsc{Greedy}, then there exists no edge~$e$ outside $M$ which can be added to $M$ (otherwise \textsc{Greedy} could have added $e$ when it tried to), hence $M$ is $1$-locally optimal by definition. On the other hand, if $M$ is $1$-locally optimal, then permute $E$ such that the edges of $M$ come first. As $w\equiv 1$, one can choose this particular permutation in the first line of Algorithm~\ref{alg:greedy}. To complete the proof, observe that its output is $M$ itself, since it includes all edges of $M$ as $M$ is feasible, and may not include any other edges, because $M$ is $1$-locally optimal.
    \qed
\end{proof}

In what follows, an upper bound $\varrho_l$ is shown on the approximation ratio of $l$-locally optimal solutions for each $l\geq 1$, where $\varrho_l$ is defined by the following recursion.
\begin{equation}\label{cases:rhoReq}
  \varrho_l=\begin{cases}
    3, & \text{if } l=1\\
    2, & \text{if } l=2\\
    \dfrac{4\varrho_{l-2}-3}{2\varrho_{l-2}-1}, & \text{if } l\geq 3.
  \end{cases}
\end{equation}

%The proof for $l=1,2,3,4$ is quite simple, however, this approach does not seem to work in the general case. First, a simple argument is presented for $l=1,2,3,4$, then the general case follows, which is much more involved and quite esoteric.

For $l=1,2,3,4$, the statement can be proved by a simple argument, given below. However, this approach does not seem to work in the general case. The proof of the general case, which is much more involved and quite esoteric, is given after the following theorem.

\begin{thm}\label{thm:locOptSpecial}
  If $M, M^*$ are $d$-distance matchings such that $M$ is $l$-locally optimal with respect to $M^*$, then the approximation ratio $\sfrac{|M^*|}{|M|}$ is at most $\varrho_{l}$, where $l=1,\dots,4$ and $\varrho_{l}$ is as defined above.
\end{thm}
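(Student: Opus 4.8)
The plan is to reduce the statement to a purely extremal estimate on a bipartite \emph{hitting graph}. Write $B:=M^*\setminus M$ and $C:=M\setminus M^*$, and observe that no edge of $B$ can hit an edge of $A:=M\cap M^*$, since two edges of the $d$-distance matching $M^*$ never conflict; hence $\hit(e^*,M)\subseteq C$ for every $e^*\in B$. Consequently $\lvert M^*\rvert/\lvert M\rvert=(\lvert A\rvert+\lvert B\rvert)/(\lvert A\rvert+\lvert C\rvert)$, and because $\varrho_l\ge 1$ it suffices to prove $\lvert B\rvert\le\varrho_l\lvert C\rvert$ — the common part $A$ only helps. I would form the bipartite graph $H$ on $B\cup C$ joining $e^*\in B$ to each edge of $\hit(e^*,M)$. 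Two structural facts drive everything. First, every $e^*=s_it\in B$ hits at most three edges of $M$: at most the one edge of $M$ meeting $s_i$ (a degree conflict), plus at most two edges of $M$ on $t$ whose $S$-endpoint lies in $L_d(s_i)\cup R_d(s_i)$, because a window of $2d-1$ consecutive nodes contains at most two pairwise $d$-apart edges of $M$ on a fixed $t$. Symmetrically, every $f=s_jt'\in C$ is hit by at most three edges of $B$: the unique edge of $M^*$ meeting $s_j$, and at most two edges of $M^*$ on $t'$ inside the window of $s_j$. Thus $H$ has maximum degree three on both sides.

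Next I would translate local optimality into a Hall-type condition: $M$ being $l$-locally optimal with respect to $M^*$ says precisely that every $X\subseteq B$ with $\lvert X\rvert\le l$ satisfies $\lvert N_H(X)\rvert=\lvert\hit(X,M)\rvert\ge\lvert X\rvert$. For $l=1$ this forces each $e^*\in B$ to hit at least one edge, so counting incidences of $H$ in two ways gives $\lvert B\rvert\le\sum_{e^*\in B}\lvert\hit(e^*,M)\rvert=\sum_{f\in C}\deg_H(f)\le 3\lvert C\rvert$, i.e.\ the ratio is at most $\varrho_1=3$. For $l=2$ the new ingredient is that two edges of $B$ cannot both have the same singleton hit set (their union would have size $1<2$); hence the edges $e^*\in B$ with $\lvert\hit(e^*,M)\rvert=1$ inject into $C$ through their unique neighbour, so their number $b_1$ satisfies $b_1\le\lvert C\rvert$. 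Combining this with the incidence bound $b_1+2(\lvert B\rvert-b_1)\le 3\lvert C\rvert$ yields $\lvert B\rvert\le(b_1+3\lvert C\rvert)/2\le 2\lvert C\rvert=\varrho_2\lvert C\rvert$.

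For $l=3,4$ I would push the same bookkeeping using the stronger Hall conditions. I would classify the edges of $B$ by hit-set size $1,2,3$ and record the extra constraints imposed: for $l\ge 3$, any two-element set $\{f,g\}\subseteq C$ is the hit set of at most two edges of $B$ (three would cover only two vertices of $C$), which in particular forbids a hit-set-two edge of $B$ from joining two vertices of $C$ that already carry \emph{private} hit-set-one edges; for $l=4$ one gains the analogous restriction on three-element subsets of $C$. Feeding these cardinality constraints into the two-way incidence count — most cleanly as a charging scheme in which each edge of $B$ ships one unit to the edges of its hit set and one verifies that every $f\in C$ receives at most $\varrho_l$ — should give $\lvert B\rvert\le\varrho_3\lvert C\rvert=\tfrac95\lvert C\rvert$ and $\lvert B\rvert\le\varrho_4\lvert C\rvert=\tfrac53\lvert C\rvert$.

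The main obstacle is the $l=3,4$ accounting. Unlike the clean $l=1,2$ cases, a degree-three edge of $B$ may touch several private vertices of $C$ without violating any Hall inequality, so the charging must be arranged carefully — or replaced by a small linear program over the variables counting each hit-set size and the split of incidences between private and non-private vertices of $C$ — to certify the exact constants $\tfrac95$ and $\tfrac53$ rather than merely reproving $2$. The shape of the recursion $\varrho_l=(4\varrho_{l-2}-3)/(2\varrho_{l-2}-1)$, equivalently $1/(\varrho_l-1)=1+1/\bigl(2(\varrho_{l-2}-1)\bigr)$, strongly signals that the honest route is to reduce an $l$-locally optimal instance to an $(l-2)$-locally optimal one and induct; for $l\le 4$, however, the finite case analysis above should suffice, avoiding the full recursive machinery developed afterwards.
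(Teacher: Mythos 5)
Your reduction to the bipartite hitting graph on $B=M^*\setminus M$ versus $C=M\setminus M^*$ is sound (edges of $M^*$ indeed never hit edges of $M\cap M^*$), the degree-three bounds on both sides are correct, and your arguments for $l=1,2$ are complete and essentially identical to the paper's counting with $|M^*_1|+2|M^*_2|+3|M^*_3|\le 3|M|$ and the injectivity of singleton hit sets. The problem is that $l=3,4$ are the actual content of the theorem, and there you stop at ``should give the constants.'' For $l=3$ you do isolate the right structural fact --- a hit-set-two edge of $B$ cannot have both endpoints on vertices of $C$ carrying private hit-set-one edges --- but you never convert it into a usable inequality. The conversion is: letting $C_1$ be the $b_1$ vertices carrying hit-set-one edges, every hit-set-two edge has an endpoint in $C\setminus C_1$, and each such vertex absorbs at most three of them, so $b_2\le 3(|C|-b_1)$; then $5|B|=2(b_1+2b_2+3b_3)+3b_1+b_2-b_3\le 6|C|+3b_1+3(|C|-b_1)=9|C|$. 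Without writing down this (or an equivalent) inequality, the $9/5$ bound is not certified, and as you yourself observe, the naive uniform charging fails (a vertex with a private hit-set-one edge plus two hit-set-two edges receives $2>9/5$).

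For $l=4$ the gap is worse: the constraint you name (``any three-element subset of $C$ is the hit set of at most three edges of $B$'') is not the operative one and does not yield $5/3$; together with $b_1\le|C|$ and the incidence bound it still permits $|B|=2|C|$. What is actually needed is $b_2\le 2(|C|-b_1)$, which comes from a \emph{two}-local-optimality statement about the derived instance: two hit-set-two edges cannot share their unique non-$C_1$ hit vertex, because adjoining the (at most two) private hit-set-one edges of their $C_1$-endpoints produces a set of at most four edges of $M^*$ hitting at most three edges of $M$. This is precisely the reduction you flag at the end as ``the honest route'' --- the paper's proof sets $\tilde M=M\setminus\hit(M^*_1,M)$, $\tilde M^*=M^*_2$, shows $\tilde M$ is $(l-2)$-locally optimal with respect to $\tilde M^*$, and applies the case $l-2$ --- but declining to execute it leaves the two nontrivial cases unproved.
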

  \begin{proof}
    Let $M^*_i=\{e^*\in M^* : |\hit(e^*,M)|=i\}$ for $i=0,\dots,3$. Note that $M^*_0,M^*_1,M^*_2,M^*_3$ is a partition of $M^*$, and $M^*_0=\emptyset$ since each edge of $M^*$ hits at least one edge of $M$ if $l\geq 1$.
    Since each edge $e\in M$ can be hit by at most three edges of $M^*$, one gets
    \begin{equation}\label{eq:locOptBaseSpecial}
      3|M|\geq\sum\limits_{e^*\in M^*}|\hit_+(e^*,M)|=|M^*_1|+2|M^*_2|+3|M^*_3|.
    \end{equation}
    \emph{Case $l=1$.}

    It easily follows from (\ref{eq:locOptBaseSpecial}) that
    \begin{equation}
      |M^*|=|M^*_1|+|M^*_2|+|M^*_3|\leq |M^*_1|+2|M^*_2|+3|M^*_3|\leq 3|M|.
    \end{equation}
    \emph{Case $l=2$.}
    Similarly,
    \begin{multline}
      2|M^*|=2(|M^*_1|+|M^*_2|+|M^*_3|)\leq|M^*_1|+|M^*_1|+2|M^*_2|+3|M^*_3|\\\leq |M^*_1|+3|M|\leq 4|M|,
    \end{multline}
    where the second inequality follows from (\ref{eq:locOptBaseSpecial}) and the third one holds because $M$ is $2$-locally optimal with respect to $M^*$.
    \\
    \emph{Case $l=3$.}
    For $l=3$, one has to show that $\sfrac{|M^*|}{|M|}\leq\sfrac{9}{5}$. In the following computation, inequality (\ref{eq:locOptBaseSpecial}) is forced with an appropriate coefficient so that the rest admits the application of case $l=1$ to a derived problem instance.
    \begin{multline}\label{eq:locOptNotNightmareL3}
      5|M^*|=5(|M^*_1|+|M^*_2|+|M^*_3|)=2(|M^*_1|+2|M^*_2|+3|M^*_3|)\\+3|M^*_1|+|M^*_2|-|M^*_3|
      \leq 6|M|+3|M^*_1|+|M^*_2|-|M^*_3| \leq 9|M|,
    \end{multline}
    where the first inequality holds by (\ref{eq:locOptBaseSpecial}), while the last one by the following claim.
    \begin{cl}
      If $M$ is 3-locally optimal with respect to $M^*$, then
      \begin{equation}\label{eq:locOptSpecIneqL3}
          |M^*_2|-|M^*_3|\leq 3(|M|-|M_1^*|).
      \end{equation}
      \begin{proof}
        It suffices to show that there exist $d$-distance matchings $\tilde M$, $\tilde M^*$ such that 
        \begin{enumerate}[1)]
            \item $|\tilde M|=|M|-|M^*_1|$,
            \item $|\tilde M^*|=|M^*_{2}|$,
            \item $\tilde M$ is $1$-locally optimal with respect to $\tilde M^*$.
         \end{enumerate}
        Then, condition 3) implies that $|\tilde M^*|\leq 3|\tilde M|$ holds, from which the inequality to be proved follows by substituting 1) and 2).

        Let $\tilde M = M\setminus\hit(M^*_1,M)$ and $\tilde M^*=M^*_2$. Clearly, both 1) and 2) hold. By contradiction, suppose that 3) does not hold, that is, there exists $e^*_1\in\tilde M^*$ such that $\tilde M\cup \{e^*_1\}$ is a feasible $d$-distance matching. By definition, $e^*_1\in M^*_2$, therefore $e^*_1$ hits exactly two edges $e_1,e_2$ in $M$. Neither $e_1$, nor $e_2$ are in $\tilde M$, thus $e_1,e_2\in\hit(M^*_1,M)$, that is $e_j$ is hit by an edge $e^*_{j+1}\in M^*_1$ for $j=1,2$. Note that $e^*_1,e^*_2,e^*_3$ are pairwise distinct edges, and $\hit(\{e^*_1,e^*_2,e^*_3\},M)=\{e_1,e_2\}$, contradicting that $M$ is 3-locally optimal.
        \qed
      \end{proof}
      \emph{Case $l=4$.}
      One has to show that $\sfrac{|M^*|}{|M|}\leq\sfrac{5}{3}$. As in the previous case, inequality~(\ref{eq:locOptBaseSpecial}) will be applied with an appropriate multiplier so that the rest admits the application of case $l=2$ to a derived problem instance.
      \begin{multline}\label{eq:locOptNotNightmareL4}
        6|M^*|=6(|M^*_1|+|M^*_2|+|M^*_3|)= 2(|M^*_1|+2|M^*_2|+3|M^*_3|)+4|M^*_1|+2|M^*_2|\\
        \leq 6|M|+4|M^*_1|+2|M^*_2|\leq 10|M|,
      \end{multline}
      where the first inequality holds by (\ref{eq:locOptBaseSpecial}), the last one by the following claim.

      \begin{cl}
         If $M$ is 4-locally optimal with respect to $M^*$, then
       \begin{equation}\label{eq:locOptSpecIneqL4}
          2|M^*_2|\leq 4(|M|-|M_1^*|).
      \end{equation}
        \begin{proof}
          It suffices to show that there exist $d$-distance matchings $\tilde M$, $\tilde M^*$ such that
          \begin{enumerate}[1)]
            \item $|\tilde M|=|M|-|M^*_1|$,
            \item $|\tilde M^*|=|M^*_{2}|$,
            \item $\tilde M$ is $2$-locally optimal with respect to $\tilde M^*$.
          \end{enumerate}
          Then, condition 3) implies that $|\tilde M^*|\leq 2|\tilde M|$ holds, from which one obtains the inequality to be proved by substituting 1) and 2).

          Let $\tilde M = M\setminus\hit(M^*_1,M)$ and $\tilde M^*=M^*_2$. As in the proof of the claim in case $l=3$, one might show that 3) holds, hence the desired inequality follows.
          \qed
        \end{proof}
      \end{cl}
    \end{cl}
    
    This concludes the proof of the theorem.
    \qed
\end{proof}

It is worth noting that the proof for $l=3,4$ refers inductively to the case $l-2$, which is quite unexpected. The same idea does not seem to work for $l=5$. Based on cases $l=1,2,3,4$, one gains the following analogous computation.
\begin{multline}
  13|M^*|=13(|M^*_1|+|M^*_2|+|M^*_3|)=4(|M^*_1|+2|M^*_2|+3|M^*_3|)+9|M^*_1|\\
  +5|M^*_2|+|M^*_3|\leq 12|M|+9|M^*_1|+5|M^*_2|+|M^*_3|\leq 21|M|,
\end{multline}
where the last inequality requires that $5|M^*_2|+|M^*_3|\leq 9(|M|-|M^*_1|)$. However, the latter inequality does not admit a constructive argument similar to the cases $l=3,4$ (see the proof of Theorem~\ref{thm:locOptSpecial}). To overcome this complication, consider the following extended problem setting, which surprisingly does admit a constructive argument.
\begin{defn}
  Let $R$ be a set of (parallel) loops on the nodes of $S$. A subset $M\subseteq E\cup R$ is \textbf{(R,d)-distance matching} if it is the union of a $d$-distance matching and $R$.
\end{defn}

Consider the following extension of Definition~\ref{defn:hitset}.

\begin{defn}\label{def:hitPlus}
  Given an $(R,d)$-distance matching $M$ and an edge $sv\in (S\times T)\cup R$, let $$\hit_+(sv,M)=\begin{cases}
    \hit(sv,M\setminus R)\cup\{\text{$e\in R$ : $e$ is incident to node $s$}\}, & \text{if } sv\in S\times T\\
    sv, & \text{if } sv\in R.
  \end{cases}$$
\end{defn}

In other words, each $st\in E$ hits the edges hit by $\hit(st,M)$ and all the loops incident to node $s$, while each loop hits only itself. A natural way to define the hit set of multiple edges is as follows.

\begin{defn}
  Given an edge set $X\subseteq E$, let $\hit_+(X,M) = \bigcup_{e\in X}\hit_+(e,M)$.
\end{defn}

Using $\mathcal{H_+}$, the definition of $l$-locally optimal $d$-distance matchings can be naturally extended to $(R,d)$-distance matchings.

\begin{defn}
  An $(R,d)$-distance matching $M$ is \textbf{$l$-locally optimal} if there exists no $d$-distance matching $X\subseteq E\setminus M $ such that $l\geq |X| > |\mathcal{H_+}(X,M)|$. Similarly, $M$ is \textbf{$l$-locally optimal with respect to $M^*$} if there exists no $X\subseteq M^*\setminus M$ such that $l\geq |X|>|\mathcal{H_+}(X,M)|$, where $M^*$ is an $(R,d)$-distance matching.
\end{defn}

Note that each of these definitions reduces to its original counterpart if $R=\emptyset$. Therefore, it suffices to show that $\varrho_l$ is an upper bound on the approximation ratio of $(R,l)$-locally optimal solutions.

To elaborate on the intuition behind these technical definitions and to understand how $R$ influences locally optimality, suppose that we are given a feasible $d$-distance matching $M$, which we want to make $l$-locally optimal. To this end, one needs to find a $d$-distance matching $X\subseteq E\setminus M$ of cardinality at most $l$ that hits strictly fewer edges of $M$ then its cardinality. In an $(R,d)$-distance matching, however, the number of edges hit by such a subset $X$ can be larger because of the loops (as $\hit_+(X,M)$ also counts those), meaning that the requirements for $l$-locally optimality are relaxed. Intuitively, the loops incident to a node $s\in S$ can be thought of as the "resistances" of $s$: the more loops $s$ has, the less we want to replace the edge of $M$ incident to $s$ with some other edge of $\Delta(s)$. Note, however, that the loops also contribute to the size of the matching, which will be crucial in the proof of the next theorem.

\begin{thm}\label{thm:mainLocOpt}
  If $M, M^*$ are $(R,d)$-distance matchings such that $M$ is $l$-locally optimal with respect to $M^*$, then the approximation ratio $\sfrac{|M^*|}{|M|}$ is at most $\varrho_l$, where $l\geq 1$ and $\varrho_l$ is as defined above.
\end{thm}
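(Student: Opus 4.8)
The plan is to prove the bound by strong induction on $l$ in steps of two, so that the cases $l=1,\dots,4$ already established in Theorem~\ref{thm:locOptSpecial} serve as the anchor, but now carried out inside the richer $(R,d)$-distance matching framework using $\hit_+$ from Definition~\ref{def:hitPlus}. For the base cases $l=1,2$ I would rerun the double-counting behind~(\ref{eq:locOptBaseSpecial}): setting $M^*_i=\{e^*\in M^* : |\hit_+(e^*,M)|=i\}$, every $e^*\in M^*$ satisfies $|\hit_+(e^*,M)|\ge 1$ (a real edge by $1$-local optimality, a loop because it hits itself), while each element $f\in M$ — whether a genuine edge or a loop of $R$ — lies in $\hit_+(e^*,M)$ for at most three $e^*\in M^*$. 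This reproduces $|M^*_1|+2|M^*_2|+3|M^*_3|\le 3|M|$ and the values $\varrho_1=3$, $\varrho_2=2$ verbatim, the loops only relaxing the constraint on $M$.

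For the inductive step $l\ge3$ the heart of the matter is a reduction to the case $l-2$. I would delete from $M$ the edges hit by the $1$-fold class, forming a core $M\setminus\hit_+(M^*_1,M)$; since $M$ is $2$-locally optimal the map sending each $e^*\in M^*_1$ to the unique edge it hits is injective, so this core has size $|M|-|M^*_1|$. The novelty over the special cases is that the residual resistances of the deleted edges are recorded as a fresh loop set $\tilde R$, producing $(\tilde R,d)$-distance matchings $\tilde M,\tilde M^*$ in which $\tilde M^*$ can absorb the contributions of the higher classes $M^*_2,M^*_3$ that the naive bound cannot handle. Granting such a construction with $|\tilde M^*|\le\varrho_{l-2}|\tilde M|$, the theorem follows by feeding this together with the counting inequality into the algebra forced by the recursion~(\ref{cases:rhoReq}): the combination collapses exactly to $(2\varrho_{l-2}-1)|M^*|\le(4\varrho_{l-2}-3)|M|$. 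For $l=5$ this is the inequality $5|M^*_2|+|M^*_3|\le 9(|M|-|M^*_1|)$ flagged in the text as resisting a direct argument, and one checks that $4\cdot(\ref{eq:locOptBaseSpecial})$ plus this inequality yields precisely $13|M^*|\le21|M|$, i.e.\ $\varrho_5=\sfrac{21}{13}$.

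The step I expect to be the genuine obstacle — and the reason the loops are indispensable — is verifying that $\tilde M$ is $(l-2)$-locally optimal with respect to $\tilde M^*$. This must go by contradiction: a forbidden $X\subseteq\tilde M^*\setminus\tilde M$ with $|X|\le l-2$ and $|X|>|\hit_+(X,\tilde M)|$ has to be lifted to a forbidden set in the original instance. Each edge of $X$ that fails, in $\tilde M$, to hit an edge it hit in $M$ does so precisely because that edge was deleted into $\hit_+(M^*_1,M)$; appending the responsible edges of $M^*_1$ (at most two per edge of $X$) should enlarge $X$ into a $d$-distance matching $X'\subseteq M^*\setminus M$ of size at most $l$ whose $\hit_+$-image grows more slowly than $|X'|$, contradicting $l$-local optimality of $M$. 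The loops of $\tilde R$ are exactly the bookkeeping device that keeps $|\hit_+(X,\tilde M)|$ honest through this transfer, accounting for the residual hits of the $M^*_3$-edges that caused the naive $l=5$ computation to fail. The delicate points will be choosing the placement and multiplicity of the loops in $\tilde R$ so that the sizes of $\tilde M$ and $\tilde M^*$ match the coefficients dictated by $\varrho_{l-2}$, and ensuring that the appended $M^*_1$-edges are distinct and that $X'$ remains a feasible $d$-distance matching — which is where the overlap combinatorics of the windows $L_d(\cdot)\cup R_d(\cdot)$ must be controlled carefully.
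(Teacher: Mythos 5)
Your overall architecture coincides with the paper's: the classes $M^*_k$ defined via $|\hit_+(e^*,M)|$, the counting bound $\sum_k k|M^*_k|\le 3|M|$, induction in steps of two anchored at $l=1,2$, and the reduction that deletes $\hit_+(M^*_1,M)$ from $M$, records the excess hits of the classes $M^*_{3+}$ as parallel loops $\tilde R$, and invokes the case $l-2$ on the resulting $(\tilde R,d)$-distance matchings $\tilde M,\tilde M^*$. The algebra you describe is exactly the paper's inequality $(2\varrho_{l-2}-1)|M^*|\le(4\varrho_{l-2}-3)|M|$, and your identification of $5|M^*_2|+|M^*_3|\le 9(|M|-|M^*_1|)$ as the $l=5$ instance of the missing lemma is correct, as is the injectivity observation giving $|\tilde M\setminus\tilde R|=|M|-|M^*_1|$.

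The gap is in the one step you yourself flag as the obstacle: proving that $\tilde M$ is $(l-2)$-locally optimal with respect to $\tilde M^*$. Your proposed transfer --- take a violating $X\subseteq\tilde M^*$ with $|X|\le l-2$ and append, for each edge of $X$, the responsible edges of $M^*_1$ --- does not respect the size budget: it yields a set $X'$ with $|X'|$ as large as $3|X|\le 3(l-2)$, which exceeds $l$ as soon as $l\ge 4$, so no contradiction with the $l$-local optimality of $M$ is obtained. (The hit-count side of the lift is not the problem; one can check that $|X'|>|\hit_+(X',M)|$ does follow. It is only the cardinality bound $|X'|\le l$ that fails.) The paper closes this hole not by a local lift but by a minimal-counterexample argument: assuming a violating $Z$ exists and taking the whole instance minimal, it shows that every edge of $\hit_+(Z,\tilde M)$ is hit by at least two edges of $Z$, that one may take $Z=\tilde M^*$, and that $\hit_+(M^*_1,M)\subseteq\hit_+(M^*_{2+},M)$; the double-hitting property is then precisely what bounds the number of $M^*_1$-edges that must be accounted for, yielding $|M^*|\le |Z|+2\le l$ together with $|\hit_+(M^*,M)|=|M^*|-1$, so that the entire reduced $M^*$ serves as the forbidden set. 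Without this minimality reduction, or some substitute that controls how many edges of $M^*_1$ a violating $X$ can reach, your induction does not close.
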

  \begin{proof}
    As in the proof of Theorem~\ref{thm:locOptSpecial}, let $M^*_i=\{e^*\in M^* : |\hit_+(e^*,M)|=i\}$ for $i\in \N$, and let $M^*_{i+}=\bigcup_{k=i}^\infty M^*_k$. Note that $M^*_0,M^*_1,\dots$ is a partition of $M^*$, for which $R\subseteq M^*_1$ by definition, and $M^*_0=\emptyset$ since each edge of $M^*$ hits at least one edge of $M$ if $l\geq 1$.
    Similar to (\ref{eq:locOptBaseSpecial}), observe that each edge $e\in M$ can be hit by at most three edges of $M^*$, therefore
    \begin{equation}\label{eq:locOptBase}
      3|M|\geq\sum\limits_{e^*\in M^*}|\hit_+(e^*,M)|=\sum\limits_{k=1}^\infty k|M_k^*|.
    \end{equation}
    The proof is by induction on $l$. The argument for $l=1,2$ is analogous to that in the proof of Theorem~\ref{thm:locOptSpecial}.
    \\
    \emph{Case 1: $l=1$.}

    It easily follows from (\ref{eq:locOptBase}) that
    \begin{equation}
      |M^*|=\sum\limits_{k=1}^\infty|M^*_k|\leq\sum\limits_{k=1}^\infty k|M^*_k|\leq 3|M|.
    \end{equation}
    \\
    \emph{Case 2: $l=2$.}
    Similarly,
    \begin{equation}
      2|M^*|=2\sum\limits_{k=1}^\infty|M^*_k|\leq|M^*_1|+\sum\limits_{k=1}^\infty k|M^*_k|\leq |M^*_1|+3|M|\leq 4|M|,
    \end{equation}
    where the second inequality follows from (\ref{eq:locOptBase}) and the third one holds because $M$ is $2$-locally optimal with respect to $M^*$.
    \\
    \emph{Case 3: $l\geq 3$.}
    One has to show that $\sfrac{|M^*|}{|M|}\leq\sfrac{4\varrho_{l-2}-3}{2\varrho_{l-2}-1}$.
    %Similarly to the proof of Theorem~\ref{thm:locOptSpecial}, we want to apply case $l-2$ to a derived instance of the problem inductively.
    First, introduce the notation $\alpha(M,M^*)=\sum_{k=3}^\infty (k-2)|M^*_k|$.
    %The high-level idea of the proof of the general case is similar to cases $l=3,4$ in Theorem~\ref{thm:locOptSpecial}.
    In the following computation, inequality (\ref{eq:locOptBase}) is forced with an appropriate multiplier so that the rest admits the application of case $l-2$ to a derived problem instance (see Lemma~\ref{lem:locOpt}). Note that the approach is similar to computations~(\ref{eq:locOptNotNightmareL3})~and~(\ref{eq:locOptNotNightmareL4}).
    \begin{multline}\label{eq:locOptNightmare}
      (2\varrho_{l-2}-1)|M^*|=(2\varrho_{l-2}-1)\sum\limits_{k=1}^\infty |M^*_k|=(\varrho_{l-2}-1)\sum\limits_{k=1}^\infty k|M^*_k|\\
      +\sum\limits_{k=1}^\infty ((k-1)-(k-2)\varrho_{l-2})|M^*_k|\\
      \leq 3(\varrho_{l-2}-1)|M|+\sum\limits_{k=1}^\infty ((k-1)-(k-2)\varrho_{l-2})|M^*_k|\\
      =3(\varrho_{l-2}-1)|M|+\varrho_{l-2}|M^*_1|+|M^*_2|+\sum\limits_{k=3}^\infty (k-1)|M^*_k|-\varrho_{l-2}\sum\limits_{k=3}^\infty (k-2)|M^*_k|\\
      =3(\varrho_{l-2}-1)|M|+\varrho_{l-2}|M^*_1|+|M^*_2|+\sum\limits_{k=3}^\infty |M^*_k|+\alpha(M,M^*)-\varrho_{l-2}\alpha(M,M^*)\\
      =3(\varrho_{l-2}-1)|M|+\varrho_{l-2}|M^*_1|+|M^*_2|+|M^*_{3+}|+\alpha(M,M^*)-\varrho_{l-2}\alpha(M,M^*)\\
      \leq 3(\varrho_{l-2}-1)|M|+\varrho_{l-2}|M|=(4\varrho_{l-2}-3)|M|,
    \end{multline}
    where the first inequality holds by~(\ref{eq:locOptBase}) and the last one by the following lemma.
    Note that if $R=\emptyset$, then $\alpha(M,M^*)=|M_3^*|$ and $|M_k|=0$ for $k\geq 4$, hence~(\ref{eq:locOptNightmare})~gives back (\ref{eq:locOptNotNightmareL3}) and (\ref{eq:locOptNotNightmareL4}) for $l=3,4$, respectively. The following lemma completes the proof of~(\ref{eq:locOptNightmare}).
    \begin{lem}\label{lem:locOpt}
      If $l\geq 3$ and $M,M^*,\alpha(M,M^*)$ are as above, then
      \begin{equation}\label{eq:locOptIneq}
        |M^*_{2+}|+\alpha(M,M^*)\leq\varrho_{l-2}(|M|-|M^*_1|+\alpha(M,M^*))
      \end{equation}
     \end{lem}
      \begin{proof}
       It suffices to show that if $M$ is $l$-locally optimal with respect to $M^*$, then there exist $\tilde M$, $\tilde M^*$ and $\tilde R$ such that
        
        \begin{enumerate}[1)]
            \item $\tilde M$ and $\tilde M^*$ are $(\tilde R,d)$-distance matchings,
            \item $|\tilde M|=|M|-|M^*_1|+\alpha(M,M^*)$,
            \item $|\tilde M^*|=|M^*_{2+}|+\alpha(M,M^*)$,
            \item $|\tilde R|=\alpha(M,M^*)$,
            \item $\tilde M$ is $(l-2)$-locally optimal with respect to $\tilde M^*$.
        \end{enumerate}
        Then, condition 5) implies that $|\tilde M^*|\leq\varrho_{l-2}|\tilde M|$ holds by induction, from which one obtains (\ref{eq:locOptIneq}) by substituting 2) and 3). We define $\tilde R,\tilde M$ and $\tilde M^*$ such that
        \begin{gather}
            \tilde R = \bigcup_{s^*t^*\in M^*_{3+}}\{|\hit_+(s^*t^*,M)|-2\text{ parallel loops incident to }s^*\},\nonumber\\
            \tilde M = M\setminus\hit_+(M^*_1,M)\cup\tilde R\text{ and }\nonumber\\
            \tilde M^* = M^*_{2+}\cup\tilde R.\nonumber
        \end{gather}
        It is easy to see that $\tilde M, \tilde M^*$ and $\tilde R$ fulfill 1)-4). In the rest of the proof, we argue that 5) holds, as well. By contradiction, suppose that 5) does not hold, that is, there exists $Z\subseteq\tilde M^*:l-2\geq|Z|>|\hit_+(Z,\tilde M)|$. Assume that the instance of the problem at hand is minimal in the sense that $|M|+|M^*|+|\tilde M|+|\tilde M^*|+|Z|$ is minimal. First, various useful properties of minimal problem instances are derived. Note that $|Z|=|\hit_+(Z,M)|+1$ can be assumed, otherwise $|Z|>|\hit_+(Z,M)|+1$ and therefore one could have removed an arbitrary edge from $Z$.

        Observe that if an edge $e\in\hit_+(Z,\tilde M)$ were hit by a sole edge $e^*\in Z$, then $l-2\geq|Z\setminus\{e^*\}|>|\hit_+(Z\setminus\{e^*\},\tilde M)|$ would hold, i.e. one could have left out $e^*$ from $Z$. Therefore, each edge $e\in\hit_+(Z,\tilde M)$ is hit by at least two edges of $Z$. This also implies that $s^*t^*\in Z$ if and only if $\{e\in R : e\text{ is incident to }s^*\}\subseteq Z$. Using this, $Z=\tilde M^*$ follows, because removing all edges $\tilde M^*\setminus Z$ from $M^*$ and all those loops from $R$ that are incident to the removed edges, one obtains a smaller instance (where $\alpha(M,M^*),\tilde R, \tilde M$ and $\tilde M^*$ need to be adjusted appropriately after the edge-removal), which satisfies 1)-4) but not 5)% --- as we supposed by contradiction that condition 5) is violated
        .
        Clearly, $M$ remains $l$-locally optimal with respect to $M^*$ after the edge-removal. So, one can assume that $Z=\tilde M^*$.

        A minimal instance also fulfills that there exist no edges $e\in M$ and $e^*\in M^*_1$ such that $e$ is not hit by any edge of $M^*_{2+}$,
        (that is, $\hit_+(e^*,M)\setminus\hit_+(M^*_{2+},M)=\emptyset$),
        otherwise the removal of $e$ and $e^*$ results in a smaller instance satisfying 1)-4) but not 5). Observe that after the removal, $M$ remains $l$-locally optimal with respect to $M^*$, because there exists no $X\subseteq M^*\setminus \{e^*\}$ such that $e\in\hit_+(X,M)$ (since $e$ is not hit by any edge of $M^*_{2+}$), therefore if the new instance were not $l$-locally optimal, then the original instance would not have been either. So, one can assume that $\hit_+(M_1^*,M)\setminus\hit_+(M^*_{2+},M)=\emptyset$.
        
        Now we are ready to derive that $|\hit_+(M^*,M)|<|M^*|\leq l$ holds --- contradicting that $M$ is $l$-locally optimal. The first inequality is shown by
        \begin{multline}\label{cl:locOptOneHand}
          |\hit_+(M^*,M)|=|\hit_+(M^*_{2+},M)|=|\hit_+(M^*_{2+},M)\cap\hit_+(M^*_1,M)|\\
          +|\hit_+(M^*_{2+},M)\setminus\hit_+(M^*_1,M)|=|\hit_+(M^*_1,M)|+|\hit_+(M^*_{2+},M)\setminus\hit_+(M^*_1,M)|\\
          =|M^*_1|+|\hit_+(M^*_{2+},M)\setminus\hit_+(M^*_1,M)|=|M^*_1|+|\hit_+(M^*_{2+},M\setminus\hit_+(M^*_1,M))|\\
          =|M^*_1|+|\hit_+(M^*_{2+},\tilde M\setminus\tilde R)|=|M^*_1|+|\hit_+(Z,\tilde M)\setminus\tilde R|=|M^*_1|+|Z|-1-|\tilde R|\\=|M^*_1|+|\tilde M^*|-1-|\tilde R|=|M^*_1|+|M^*_{2+}|+|\tilde R|-1-|\tilde R|=|M^*|-1.
        \end{multline}
        
        Next, we show that $|M^*|\leq l$.
        \begin{multline}\label{cl:locOptOtherHand}
          |M^*|=|M^*_{2+}|+|M^*_1|=|M^*_{2+}|+|\hit_+(M^*_1,M)|\\
          =|M^*_{2+}|+|\hit_+(M^*_{2+},M)\cap\hit_+(M^*_1,M)|\\
          =|M^*_{2+}|+|\bigcup\limits_{e^*\in M^*_{2+}}\hit_+(e^*,M)\cap\hit_+(M^*_1,M)|\\
          \leq |M^*_{2+}|+\sum\limits_{e^*\in M^*_{2+}}|\hit_+(e^*,M)\cap\hit_+(M^*_1,M)|\\
          = |M^*_{2+}|+\sum\limits_{e^*\in M^*_{2+}}(|\hit_+(e^*,M)|-|\hit_+(e^*,M)\setminus\hit_+(M^*_1,M)|)\\
          \leq |M^*_{2+}|+\sum\limits_{e^*\in M^*_{2+}}|\hit_+(e^*,M)|-2|\hit_+(M^*_{2+},M)\setminus\hit_+(M^*_1,M)|\\
          =|M^*_{2+}|+\sum\limits_{e^*\in M^*_{2+}}|\hit_+(e^*,M)|-2(|\hit_+(Z,\tilde M)\setminus\tilde R|)\\
          =|M^*_{2+}|+\sum\limits_{e^*\in M^*_{2+}}|\hit_+(e^*,M)|-2(|M^*_{2+}|-1)\\
          =|M^*_{2+}|+2|M^*_{2+}|+|\tilde R|-2(|M^*_{2+}|-1)\\
          =|M^*_{2+}|+|\tilde R|+2=|\tilde M^*|+2=|Z|+2\leq l,
        \end{multline}
        where the second inequality holds by the following computation.
        \begin{multline}
          2|\hit_+(M^*_{2+},M)\setminus\hit_+(M^*_1,M)|=2|\hit_+(M^*_{2+},M\setminus\hit_+(M^*_1,M))|\\
          =2|\hit_+(M^*_{2+},\tilde M\setminus\tilde R)|
          =2|\hit_+(\tilde M^*,\tilde M\setminus\tilde R)|\\
          =2|\hit_+(Z,\tilde M\setminus\tilde R)|\leq\sum\limits_{e^*\in Z} |\hit_+(e^*,\tilde M\setminus\tilde R)|\\
          =\sum\limits_{e^*\in\tilde M^*_{2+}}|\hit_+(e^*,M\setminus\hit_+(M^*_1,M))|
          =\sum\limits_{e^*\in\tilde M^*_{2+}}|\hit_+(e^*,M)\setminus\hit_+(M^*_1,M)|,
        \end{multline}
        where the inequality holds because each edge of $\hit_+(Z,\tilde M)$ is hit at least twice by~$Z$.
        % \begin{multline}
        %   2|\hit_+(M^*_{2+},M)\setminus\hit_+(M^*_1,M)| = 2|\hit_+(M^*_{2+},M\setminus\hit_+(M^*_1,M))| = 2|\hit_+(\tilde M^*,\tilde M)|\\
        %   \leq \sum\limits_{e^*\in M^*_{2+}}|\hit_+(e^*,\tilde M)| = \sum\limits_{e^*\in M^*_{2+}}|\hit_+(e^*,M)\setminus\hit_+(M^*_1,M)|.
        % \end{multline}
        Combining (\ref{cl:locOptOneHand}) and (\ref{cl:locOptOtherHand}), one obtains that $|\hit_+(M^*,M)|<|M^*|\leq l$, which contradicts that $M$ is $l$-locally optimal with respect to $M^*$. Hence --- in contrast to the indirect assumption --- condition 5) holds, and this proves the lemma.
        \qed
      \end{proof}
    Note that if $R=\emptyset$, then Lemma~\ref{lem:locOpt} gives back (\ref{eq:locOptNotNightmareL3}) and (\ref{eq:locOptNotNightmareL4}) for $l=3,4$, respectively.
    By Lemma~\ref{lem:locOpt}, inequality (\ref{eq:locOptNightmare}) follows, meaning that the desired recursion~(\ref{cases:rhoReq}) gives a valid upper bound on the approximation ratio of the $l$-locally optimal solutions.
    \qed
  \end{proof}

\begin{cor}\label{cor:mainLocOpt}
  The approximation ratio of $l$-locally optimal $d$-distance matchings is at most $\varrho_l$, where $\varrho_l$ is as defined above.
\end{cor}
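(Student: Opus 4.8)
The plan is to obtain the corollary as the special case $R=\emptyset$ of Theorem~\ref{thm:mainLocOpt}. First I would fix an arbitrary $l$-locally optimal $d$-distance matching $M$ (in the original, loop-free sense) and let $M^*$ be a maximum-cardinality $d$-distance matching, so that $\sfrac{|M^*|}{|M|}$ is by definition the approximation ratio to be bounded. Taking $R=\emptyset$, both $M$ and $M^*$ are $(R,d)$-distance matchings, and --- as the paper already notes --- all the $(R,d)$-variants of the relevant definitions (hit sets, $\hit_+$, and $l$-local optimality) collapse to their original counterparts, since $\hit_+(\cdot,M)=\hit(\cdot,M)$ when there are no loops.

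The one point that genuinely needs a line of argument is that $l$-local optimality, which is quantified over all candidate sets $X\subseteq E\setminus M$, implies $l$-local optimality \emph{with respect to} $M^*$, which is quantified only over $X\subseteq M^*\setminus M$. This is immediate from the inclusion $M^*\setminus M\subseteq E\setminus M$: any set $X\subseteq M^*\setminus M$ witnessing a violation of the ``with respect to $M^*$'' condition is also a set $X\subseteq E\setminus M$ witnessing a violation of the unrestricted condition, and since $M^*$ is itself a $d$-distance matching, every such $X$ is automatically a $d$-distance matching as required by the unrestricted definition. Hence if no $X\subseteq E\setminus M$ yields a local improvement, then a fortiori no $X\subseteq M^*\setminus M$ does, so $M$ is $l$-locally optimal with respect to $M^*$.

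With these two observations in place, Theorem~\ref{thm:mainLocOpt} applies directly and gives $\sfrac{|M^*|}{|M|}\le\varrho_l$, which is exactly the claimed bound on the approximation ratio. I do not expect any real obstacle here: all of the combinatorial substance has already been carried out in Theorem~\ref{thm:mainLocOpt}, and in particular in the minimal-instance construction of Lemma~\ref{lem:locOpt}. The corollary is purely a matter of instantiating $R=\emptyset$ and weakening ``locally optimal against every feasible $X$'' to ``locally optimal against a fixed optimum $M^*$''; the only place demanding care is lining up the two definitions of local optimality, which the inclusion argument above settles.
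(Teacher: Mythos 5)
Your proposal is correct and follows exactly the paper's own route: specialize Theorem~\ref{thm:mainLocOpt} to $R=\emptyset$ and note that $l$-local optimality over all $X\subseteq E\setminus M$ implies $l$-local optimality with respect to the fixed optimum $M^*$ since $M^*\setminus M\subseteq E\setminus M$. The paper states this more tersely, but the content is identical.
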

  \begin{proof}
    Let $M^*$ denote an optimal $d$-distance matching. By definition, $M$ is $l$-locally optimal with respect to $M^*$, therefore $M$ is $(\emptyset,l)$-locally optimal with respect to $M^*$. By Theorem~\ref{thm:mainLocOpt}, one gets $\sfrac{|M^*|}{|M|}\leq\varrho_l$, which completes the proof.
    \qed
  \end{proof}

\begin{cor}
  For any constant $\epsilon>0$, there exist a polynomial-time algorithm for the unweighted $d$-distance matching problem that achieves an approximation guarantee of $3/2+\epsilon$.
\end{cor}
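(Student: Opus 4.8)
The plan is to combine Corollary~\ref{cor:mainLocOpt} with two ingredients: a convergence analysis of the sequence $\varrho_l$ showing $\varrho_l\to 3/2$, and a polynomial-time local-search procedure that computes an $l$-locally optimal solution for any fixed $l$.

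First I would analyze the recursion~(\ref{cases:rhoReq}). Writing $g(x)=\frac{4x-3}{2x-1}$, both the odd-indexed and the even-indexed subsequences of $(\varrho_l)$ evolve by $\varrho_{l}=g(\varrho_{l-2})$. A direct computation gives $x-g(x)=\frac{(2x-3)(x-1)}{2x-1}$ and $g(x)-\tfrac32=\frac{2x-3}{2(2x-1)}$, so the only fixed points of $g$ are $x=\tfrac32$ and $x=1$, the interval $(\tfrac32,\infty)$ is mapped into itself by $g$, and $g(x)<x$ there. Hence, starting from $\varrho_1=3$ and $\varrho_2=2$ (both exceeding $\tfrac32$), each subsequence is monotone decreasing and bounded below by $\tfrac32$, so it converges, necessarily to the fixed point $\tfrac32$ rather than to $1$. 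Therefore $\lim_{l\to\infty}\varrho_l=\tfrac32$, and for every constant $\epsilon>0$ there is a constant $l=l(\epsilon)$ with $\varrho_l\le \tfrac32+\epsilon$.

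Next I would describe the algorithm. Fix this constant $l$ and start from any feasible $d$-distance matching, for instance the output of \textsc{Greedy} (or even $M=\emptyset$). While the current matching $M$ is not $l$-locally optimal, there is, by definition, a $d$-distance matching $X\subseteq E\setminus M$ with $|X|\le l$ and $|X|>|\hit(X,M)|$; replace $M$ by $(M\setminus\hit(X,M))\cup X$. This is again a feasible $d$-distance matching: $X$ has no internal conflicts, and every edge of $M$ conflicting with some $e^*\in X$ lies in $\hit(e^*,M)\subseteq\hit(X,M)$ and is therefore removed. Moreover the cardinality strictly increases, since $|M'|=|M|-|\hit(X,M)|+|X|>|M|$, and $|M|\le|S|$, so the number of swaps is at most $n$. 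To detect an improving $X$, enumerate all subsets of $E$ of size at most $l$; for each one it takes $\mathrm{poly}(n+m)$ time to test whether it is a $d$-distance matching disjoint from $M$, to compute $\hit(X,M)$, and to check $|X|>|\hit(X,M)|$. Since $l$ is constant there are $O(m^l)$ candidates, so each phase runs in polynomial time and the whole procedure runs in $O\!\left(n\,m^{l}\cdot\mathrm{poly}(n+m)\right)$ time, which is polynomial for fixed $l$.

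Finally I would invoke Corollary~\ref{cor:mainLocOpt}: the returned $M$ is $l$-locally optimal, hence $\sfrac{|M^*|}{|M|}\le\varrho_l\le\tfrac32+\epsilon$ for an optimal $M^*$, giving the claimed approximation guarantee. Since both components are comparatively light once Theorem~\ref{thm:mainLocOpt} is available, the only points requiring care are confirming that the recursion converges to the fixed point $\tfrac32$ and not to $1$ (handled by the invariance and monotonicity of $g$ on $(\tfrac32,\infty)$), and observing that every improving swap strictly increases $|M|$, which is what bounds the number of local-search phases and makes the running time polynomial for constant $l$.
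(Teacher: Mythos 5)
Your proposal is correct and follows essentially the same route as the paper: invoke Corollary~\ref{cor:mainLocOpt}, observe $\lim_{l\to\infty}\varrho_l=3/2$, fix a constant $l=l(\epsilon)$, and run local search, noting that each improving swap strictly increases $|M|$ so the number of phases is at most $|S|$ and each phase enumerates $O(m^l)$ candidate sets. You merely spell out the convergence of the recursion (via the fixed points of $g(x)=\frac{4x-3}{2x-1}$) and the feasibility of each swap, details the paper leaves to the reader.
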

  \begin{proof}
    By Corollary~\ref{cor:mainLocOpt}, the approximation ratio of $l$-locally optimal solutions is at most $\varrho_l$. One might easily show that $\lim_{l\to\infty}\varrho_l=3/2$. Hence for any $\epsilon>0$, there exists $l_0\in\N$ such that $\varrho_l\leq 3/2+\epsilon$. To complete the proof, observe that~$l_0$ is independent from the problem size, therefore one can compute an $l_0$-locally optimal solution in polynomial time. Note that the number of improvements is at most the size of the matching and hence it is polynomial as well.
    \qed
  \end{proof}

% \begin{cl}
%   If $M$ is 2-local optimal $d$-distance matching, then $M$ is 2-approximate.\\
%   \begin{proof}\\
%     Given an optimal $d$-distance matching $M^*$, let
%     \begin{equation}\label{eq:ZeStar}
%       Z_{e^*}=\argmin\{|X|:X\subseteq M \text{ and }(M\setminus X)\cup\{e^*\}\text{ is $d$-distance matching}\}    
%     \end{equation}
%     for $e^*\in M^*$, and let
%     \begin{equation}\label{eq:MiStar}
%       M_i^*=\{e^*\in M : |Z_{e^*}|=i\}    
%     \end{equation}
%     for $i=1,2,3$. Note that $M^*=M_1^*\cup M_2^*\cup M_3^*$. First observe that
%     \begin{equation}\label{eq:2LocEq1}
%       |M_1^*|+2|M_2^*|+2|M_3^*|\leq\sum_{e^*\in M^*}|Z_{e^*}|\leq3|M|
%     \end{equation}
%     holds, where the first inequality follows from (\ref{eq:MiStar}), the second holds because $\{e^*\in M^* : e\in Z_{e^*}\}\leq 3$ for each $e\in M$. Matching $M$ is 2-local optimal, therefore 
%     \begin{equation}\label{eq:2LocM1leqM}
%       |M_1^*|\leq|M|
%     \end{equation}
%     The following inequality completes the proof.

%     \begin{equation}\label{eq:2LocFinal}
%       2|M^*|=2|M_1^*|+2|M_2^*|+2|M_3^*|\leq|M_1^*|+3|M|\leq4|M|
%     \end{equation}
%     The first and second inequality in (\ref{eq:2LocFinal}) follows from (\ref{eq:2LocEq1}) and (\ref{eq:2LocM1leqM}), respectively.

%     Figure~\ref{fig:2LocalTight} shows that the analysis is tight.
%   \end{proof}
% \end{cl}

\begin{figure}
  \centering
  \begin{tikzpicture}[xscale=.9]
    \SetVertexMath
    \grEmptyPath[form=1,x=0,y=1.5,RA=1.5,rotation=0,prefix=s]{4}
    \grEmptyPath[form=1,x=1.5,y=0,RA=1.5,rotation=0,prefix=t]{2}
    \draw[] (t0)--(s0);
    \draw[] (t0)--(s2);
    \draw[] (t1)--(s1);
    \draw[] (t1)--(s3);

    \draw[wavy] (t0) -- (s1);
    \draw[wavy] (t1) -- (s2);

  \end{tikzpicture} 
  \caption{The wavy edges form a 2-locally optimal $2$-distance matching $M$, and $M^*=E\setminus M$ is the optimal $2$-distance matching. The approximation ratio $\sfrac{|M^*|}{|M|}$ is $\varrho_2=2$.}
  \label{fig:2LocalTight}
\end{figure}

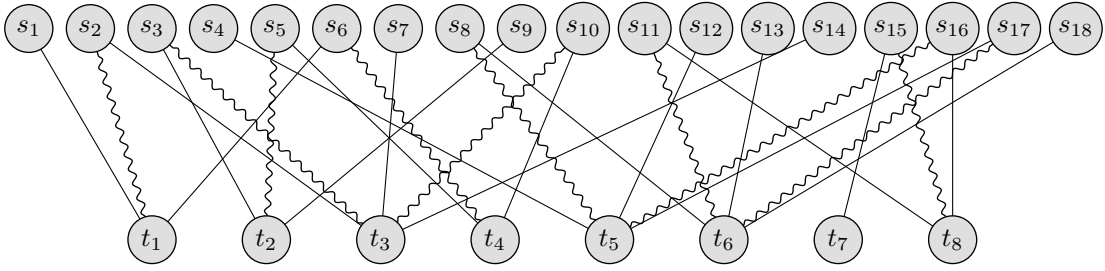
\begin{figure}
  \centering
  \begin{tikzpicture}[xscale=.9]
    \SetVertexMath
    \grEmptyPath[form=1,x=0,y=2.8,RA=.9,rotation=0,prefix=s]{18}

    \pgfmathsetmacro{\distanceInT}{13*.9/7}
    \pgfmathsetmacro{\xOffsetT}{2*.9}
    % \grEmptyPath[form=1,x=0,y=0,RA=17*.9/7,rotation=0,prefix=t]{8}
    \Vertex[x=\xOffsetT,        y=0,L=t_1]{t1}
    \Vertex[x=\xOffsetT+\distanceInT,  y=0,L=t_2]{t0}
    \Vertex[x=\xOffsetT+2*\distanceInT,y=0,L=t_3]{t7}
    \Vertex[x=\xOffsetT+3*\distanceInT,y=0,L=t_4]{t5}
    \Vertex[x=\xOffsetT+4*\distanceInT,y=0,L=t_5]{t2}
    \Vertex[x=\xOffsetT+5*\distanceInT,y=0,L=t_6]{t6}
    \Vertex[x=\xOffsetT+6*\distanceInT,y=0,L=t_7]{t4}
    \Vertex[x=\xOffsetT+7*\distanceInT,y=0,L=t_8]{t3}
    
    \draw[]     (t0) -- (s2);
    \draw[wavy] (t0) -- (s4);
    \draw[]     (t0) -- (s8);

    \draw[]     (t1) -- (s0);
    \draw[wavy] (t1) -- (s1);
    \draw[]     (t1) -- (s5);

    \draw[]     (t2) -- (s3);
    \draw[wavy] (t2) -- (s7);
    \draw[]     (t2) -- (s11);
    \draw[wavy] (t2) -- (s15);
    \draw[]     (t2) -- (s16);

    \draw[]     (t3) -- (s10);
    \draw[wavy] (t3) -- (s14);
    \draw[]     (t3) -- (s15);

    \draw[]     (t4) -- (s14);

    \draw[]     (t5) -- (s4);
    \draw[wavy] (t5) -- (s5);
    \draw[]     (t5) -- (s9);

    \draw[]     (t6) -- (s7);
    \draw[wavy] (t6) -- (s10);
    \draw[]     (t6) -- (s12);
    \draw[wavy] (t6) -- (s16);
    \draw[]     (t6) -- (s17);

    \draw[]     (t7) -- (s1);
    \draw[wavy] (t7) -- (s2);
    \draw[]     (t7) -- (s6);
    \draw[wavy] (t7) -- (s9);
    \draw[]     (t7) -- (s13);
    
  \end{tikzpicture} 
  \caption{The wavy edges form a 3-locally optimal $5$-distance matching $M$, and $M^*=E\setminus M$ is the optimal $5$-distance matching. The approximation ratio $\sfrac{|M^*|}{|M|}$ is $\varrho_3=\sfrac{9}{5}$.}
  \label{fig:3LocalTight}
\end{figure}

\begin{remark}
  Figure~\ref{fig:wGreedyTight},~\ref{fig:2LocalTight}~and~\ref{fig:3LocalTight} show that the upper bound on the approximation ratio of $l$-locally optimal solutions given by Theorem~\ref{thm:mainLocOpt} is tight for $l=1,2$ and $3$, respectively. It remains open whether the analysis is tight for $l\geq 4$.
\end{remark}

\begin{remark}
  Similar proof shows that for any constant $\epsilon>0$, the above local-search algorithm is a $(3/2+\epsilon)$-approximation algorithm for the unweighted cyclic \mbox{$d$-distance matching problem}.
\end{remark}

\section{Regular distance matching}\label{sec:regular}
The following theorem is a straightforward generalization of the well-known result that every regular bipartite graph has a perfect matching.
\begin{defn}
  An instance of the $d$-distance matching problem is \emph{$r$-regular} if $\deg(s)=r$ for each $s\in S$ and the number of edges between $t$ and $R_d(s_i)$ is $r$ for each $t\in T$ and $i=1,\dots,n-d+1$.
\end{defn}
\begin{thm}
  If a problem instance is $r$-regular, then there exists a perfect $d$-dis\-tan\-ce matching.
\end{thm}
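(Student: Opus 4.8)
The plan is to reduce the existence of a perfect $d$-distance matching in an $r$-regular instance to a statement about fractional solutions plus an integrality argument. First I would write down the natural fractional candidate: by $r$-regularity, set $x_{st} = 1/r$ for every edge $st \in E$. I claim this $x$ is a feasible solution of the \emph{perfect} program LP2. Constraint~(\ref{eq:gyerekekDM}), now at equality, reads $\sum_{st \in \Delta(s)} x_{st} = \deg(s)/r = 1$ for each $s \in S$, which holds exactly because $\deg(s) = r$. Constraint~(\ref{eq:autokDM}) reads $\sum_{s't \in E : s' \in R_d(s)} x_{s't} = (\text{number of edges between } t \text{ and } R_d(s))/r \le 1$; for every window $R_d(s_i)$ with $i \le n - d + 1$ this count is exactly $r$ by the regularity hypothesis, so the constraint holds with equality, and for the shorter windows near the right end ($i > n-d+1$) the count is at most $r$, so the inequality still holds. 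Thus $x \equiv 1/r$ is a feasible fractional perfect $d$-distance matching.

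\textbf{From fractional to integral feasibility.}

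Having a feasible point of LP2 does not by itself give a perfect matching, since LP2 is not integral for $d \ge 3$ in general. The key structural fact I would exploit instead is the decomposition underlying the proof of Theorem~\ref{thm:gap}: by Lemma~\ref{lem:orderingNEW} there is a $\theta$-flat order of the edges with respect to $x$, and the greedy colouring argument of Theorem~\ref{thm:gap} shows that any feasible fractional solution decomposes as $x = \sum_{i=1}^q \lambda_i \chi_{M_i}$ with each $M_i$ a feasible (non-perfect) $d$-distance matching and $\lambda_i = 1/K$. I would adapt this decomposition to the \emph{perfect} setting: because $x \equiv 1/r$ satisfies~(\ref{eq:gyerekekDM}) with equality, the colour classes $M_1,\dots,M_q$ must collectively cover each $s \in S$ exactly $Kx(\Delta(s)) = K$ times. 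Choosing $K = r$ so that each $x_e = 1/K$ is a single colour, every edge gets exactly one colour and each node $s$ of $S$ has all $r$ of its incident edges spread across distinct colour classes; hence each colour class $M_i$ covers $s$ exactly once. Therefore each $M_i$ is in fact a \emph{perfect} $d$-distance matching, and any one of them proves the theorem.

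\textbf{The main obstacle.}

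The delicate point — and the step I expect to require the most care — is guaranteeing that the greedy colouring produces classes that are \emph{perfect}, i.e. that the flatness bound in the colouring step leaves exactly $r$ feasible colours available at every edge rather than merely at least $Kx_e$. With $x \equiv 1/r$ and $K = r$, every $Kx_e = 1$, so the counting inequality $|A \cup B| + Kx_{e_i} \le q$ from Theorem~\ref{thm:gap} only reserves one free colour per edge; the real work is checking that the slack is tight enough that each colour class hits every node of $S$. A cleaner route, which I would prefer if the colouring bookkeeping becomes unwieldy, is an explicit inductive construction: order $S$ as $s_1,\dots,s_n$ and greedily assign colours $1,\dots,r$ to the edges at each node so that the window constraint~(\ref{eq:autokDM}) is never violated, using a Hall-type or defect argument on each length-$d$ window to show a system of distinct representatives always exists. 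Either way, the crux is converting the exact equalities forced by $r$-regularity in~(\ref{eq:autokDM}) into a genuinely integral colour-disjointness statement; the fractional feasibility is routine, but the integral realizability is where the regularity hypothesis must be used in full strength.
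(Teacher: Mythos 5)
Your fractional set-up is fine --- $x\equiv 1/r$ is indeed feasible for LP2, and your verification of (\ref{eq:gyerekekDM}) and (\ref{eq:autokDM}) is correct --- but the step that converts it into an integral perfect $d$-distance matching is exactly where the argument breaks, and the machinery you invoke cannot close the gap. The colouring in the proof of Theorem~\ref{thm:gap} uses $q=\floor{K\theta}$ colours with $\theta=2-\frac{1}{2d-1}>1$; with $K=r$ that is roughly $2r$ colour classes sharing a total of $\sum_e Kx_e=|E|=nr$ edge--colour incidences, so each $s\in S$ is covered by exactly $r$ of the $q\approx 2r$ classes and every class necessarily misses many nodes of $S$. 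Your inference ``hence each colour class $M_i$ covers $s$ exactly once'' is therefore false: it would require $q=K$, i.e.\ a $1$-flat order, and for $x\equiv 1/r$ the quantity $\xi_i+\bar\xi_i$ from Definition~\ref{def:flatOrder} can be as large as $2-\frac{2}{r}$, well above the required $1-\frac{1}{r}$. More fundamentally, no argument that uses only the fractional feasibility of $x\equiv 1/r$ can work: the paper's remark following Theorem~\ref{thm:gap} exhibits an instance with a feasible fractional perfect $5$-distance matching but no integral one, so LP2-feasibility does not imply the existence of a perfect $d$-distance matching. The regularity hypothesis has to be used inside an integral construction, not merely to certify a fractional point.

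Your one-sentence fallback (``order $S$ and greedily assign colours, using a Hall-type argument on each window'') is the right direction and is essentially the paper's proof, but it omits the key observation that makes the greedy step succeed. The paper first matches $s_1,\dots,s_d$ to distinct nodes of $T$ (Hall's condition holds in the induced subgraph by counting: $r|X|\leq r|N(X)|$), and then extends by induction: when $s_i$ is to be matched, let $t$ be the partner of the node that drops out of the length-$d$ window. The two consecutive windows $\{s_{i-d},\dots,s_{i-1}\}$ and $\{s_{i-d+1},\dots,s_i\}$ each send exactly $r$ edges to $t$ by regularity; since the dropped node is joined to $t$ by an edge of $M\subseteq E$, exactly one edge is lost in passing from the first window to the second, which forces $s_it\in E$. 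Moreover $t$ differs from the partners of $s_{i-d+1},\dots,s_{i-1}$ by feasibility of $M$ so far, so $M\cup\{s_it\}$ is again feasible. This forcing argument --- the leaving node's partner must be a neighbour of the entering node --- is the entire content of the proof and is precisely where the exact count $r$ on every window is consumed; it is the piece your proposal defers to ``a Hall-type or defect argument'' without supplying.
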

  \begin{proof}
    There exists a perfect matching between $\{s_1,\dots,s_d\}$ and $T$, because the induced graph is $r$-regular. By induction, assume that the degrees of $\{s_1,\dots,s_{i-1}\}$ in $M$ are one, where $i-1\geq d$. Let $t$ denote the node that $M$ assigns to $s_{i-d+1}$. If $s_it\not\in E$, then the number of edges between $t$ and $L_d(s_i)$ is $r-1$, meaning that the instance at hand is not $r$-regular, hence $s_it\in E$. Therefore, $M\cup\{s_it\}$ is feasible for the first $i$ nodes of $S$, hence the claim follows.
    \qed
  \end{proof}
  
If we leave out a perfect $d$-distance matching from an $r$-regular problem instance, then it becomes an $r-1$ regular instance, hence one gets the following generalization of K\H onig’s edge-coloring theorem~\cite[Page 74]{AF11}.

\begin{cor}
  If a problem instance is $r$-regular, then the edge set of the graph partitions into $r$ perfect $d$-distance matchings.
\end{cor}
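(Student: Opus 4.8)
The plan is to induct on $r$, stripping off one perfect $d$-distance matching at a time via the preceding theorem. The base case $r=0$ is vacuous, and for $r=1$ the whole edge set is already a single perfect $d$-distance matching. For the inductive step, the preceding theorem supplies a perfect $d$-distance matching $M$ in the $r$-regular instance; I would delete the edges of $M$, prove that what remains is $(r-1)$-regular, invoke the induction hypothesis to split the residual edges into $r-1$ perfect $d$-distance matchings, and restore $M$ as the $r$-th class.

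The only step with real content --- and the one I expect to be the main obstacle --- is checking that deleting one perfect $d$-distance matching lowers both regularity parameters from $r$ to exactly $r-1$. The degree condition is immediate: $M$ is perfect, so it covers each $s\in S$ exactly once, dropping every $\deg(s)$ from $r$ to $r-1$. The window condition is subtler. I would first record that an $r$-regular instance (namely one with $n\ge d$, so that at least one full-length window exists) forces $|T|=d$: double-counting the edges meeting a full window $R_d(s_i)$ gives $\sum_{s\in R_d(s_i)}\deg(s)=dr$ on one side, since each of the $d$ nodes has degree $r$, and $\sum_{t\in T}(\text{number of edges between }t\text{ and }R_d(s_i))=r\,|T|$ on the other, whence $|T|=d$.

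With $|T|=d$ established, I would show that $M$ meets every pair $(t,R_d(s_i))$ in exactly one edge. The $d$-distance condition forbids two nodes of a length-$d$ window from being matched to the same $t$, so $M$ sends the $d$ nodes of $R_d(s_i)$ to pairwise distinct elements of $T$; perfectness guarantees all $d$ of them are matched, and since $|T|=d$ these distinct images exhaust $T$. Hence each $t\in T$ receives exactly one $M$-edge from each full window, and deleting $M$ drops every window count from $r$ to $r-1$. This confirms that the residual instance is $(r-1)$-regular and closes the induction, yielding the stated partition into $r$ perfect $d$-distance matchings and generalizing K\H onig's edge-coloring theorem.
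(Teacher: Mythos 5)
Your proposal is correct and follows exactly the paper's route: repeatedly strip off a perfect $d$-distance matching supplied by the preceding theorem and check that the residual instance is $(r-1)$-regular. The paper asserts this last fact in one line without justification, whereas you supply the missing verification (the double-count giving $|T|=d$ and the observation that a perfect $d$-distance matching meets each pair $(t,R_d(s_i))$ in exactly one edge), which is a welcome addition rather than a deviation.
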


\section{Conclusion}
This paper introduced the $d$-distance matching problem. We proved that the problem is NP-complete in general and admits a $3$-approximation. We gave an FPT algorithm parameterized by $d$ and also settled the case when the size of $T$ is constant. The integrality gap of the natural integer programming model is shown to be at most $2-\frac{1}{2d-1}$, and an LP-based approximation algorithm for the weighted case is given with the same guarantee. Using a different approach, a combinatorial ($2-\frac{1}{d}$)-approximation algorithm was also described. Several greedy approaches, including a local search algorithm, were presented. The latter method achieves an approximation ratio of $3/2+\epsilon$ for any constant $\epsilon>0$ in the unweighted case.% We also gave an algorithm to find a permutation that maximizes the weight of the optimal distance matching.

The problem itself has several generalizations (e.g. pose degree bounds on the nodes of both $S$ and $T$, cyclic version of the problem, distance-constraints on both node classes, etc.), which are subjects for further research.

\section*{Acknowledgement}
The project has been supported by the \'UNKP-20-3 New National Excellence Program of the Ministry for Innovation and Technology.
The author is grateful to Krist\'of B\'erczi and Alp\'ar J\" uttner for their comments that greatly improved an earlier version of the manuscript.
The author is indebted to the anonymous reviewers of an earlier version of the manuscript for their valuable comments and suggestions.

\bibliographystyle{abbrv}
\bibliography{references}   % name your BibTeX data base

\end{document}